\renewcommand{\d}{\partial}
\newcommand{\R}{\mathbb{R}}
\newcommand{\Ric}{\mathrm{Ric}}
\newcommand\incircbin
\newcommand\@incircbin[2]
\newtheorem{theorem}{Theorem}[section]
\newtheorem{corollary}[theorem]{Corollary}
\newtheorem{lemma}[theorem]{Lemma}
\newtheorem{proposition}[theorem]{Proposition}
\theoremstyle{definition}
\newtheorem{definition}[theorem]{Definition}
\numberwithin{equation}{section}
\titleformat{\section}[block]{\scshape\filcenter}{\thesection.}{3pt}{}
\titleformat{\subsection}[block]{\scshape\filcenter}{\thesubsection.}{3pt}{}
\title{\large{\textbf{SINGULARITY MODELS OF PINCHED SOLUTIONS OF MEAN CURVATURE FLOW IN HIGHER CODIMENSION}}}
\author{\textsc{\small KEATON NAFF}}
\date{}
\begin{document}
\maketitle

\abstract{We consider ancient solutions to the mean curvature flow in $\mathbb{R}^{n+1}$ ($n \geq 3$) that are weakly convex, uniformly two-convex, and satisfy derivative estimates $|\nabla A| \leq \gamma_1 |H|^2, |\nabla^2 A| \leq \gamma_2 |H|^3$. We show that such solutions are noncollapsed. As an application, in arbitrary codimension, we consider compact $n$-dimensional ($n \geq 5$) solutions to the mean curvature flow in $\mathbb{R}^N$ that satisfy the pinching condition $|H| > 0$ and $|A|^2 < c(n) |H|^2$, $c(n) = \min\{\frac{1}{n-2}, \frac{3(n+1)}{2n(n+2)}\}$. We conclude that any blow-up model at the first singular time must be a codimension one shrinking sphere, shrinking cylinder, or translating bowl soliton.}

\section{Introduction}

In this paper, we prove a noncollapsing result for ancient, weakly convex, uniformly two-convex solutions of the mean curvature satisfying two derivative estimates. The uniqueness of ancient solutions that are uniformly two-convex and noncollapsed (in the sense of Sheng and Wang \cite{SW09}) has been studied in the noncompact case by Brendle and Choi \cite{BC18, BC19} and in the compact case by Angenent, Daskalopoulos, and Sesum \cite{ADS18, ADS19}. For applications to higher codimension, we replace the noncollapsed assumption with two derivative estimates. Our main theorem is:

\begin{theorem}\label{main}
Suppose $n \geq 3$ and let $F_t : M \to \R^{n+1}$, $t \in (-\infty, 0]$, be an $n$-dimensional, complete, connected ancient solution to the mean curvature flow in $\R^{n+1}$ that is weakly convex, uniformly two-convex, and satisfies pointwise derivative estimates: $|\nabla A| \leq \gamma_1 |H|^2$ and $|\nabla^2 A| \leq \gamma_2 |H|^3$. Then the solution is noncollapsed. 
\end{theorem}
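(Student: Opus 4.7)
The plan is to work with Andrews' noncollapsing function
\[
Z(x, y, t) = \frac{2 \langle F(x,t) - F(y,t), \nu(x,t) \rangle}{|F(x,t) - F(y,t)|^2},
\]
reducing noncollapsing to a uniform estimate $Z \leq CH$ for a constant $C = C(n, \beta, \gamma_1, \gamma_2)$, with $\beta$ the uniform two-convexity constant. By Sacksteder's theorem, weak convexity together with $H > 0$ (from two-convexity) forces each $F_t$ to be an embedding as the boundary of a convex body $\Omega_t$, and the tangent hyperplane at every point is a supporting hyperplane. Hence exterior noncollapsing is automatic, and only the interior bound on $Z/H$ needs proof.

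I would argue by contradiction and blow-up. Suppose $\sup Z/H = \infty$; pick $(x_j, y_j, t_j)$ with $Z_j/H_j := Z(x_j, y_j, t_j)/H(x_j, t_j) \to \infty$, parabolically rescale each flow by $H_j$, and translate so that in the rescaled flow $\tilde F^{(j)}$ the basepoint $(x_j, 0)$ sits at $0 \in \R^{n+1}$ with $\tilde H = 1$. The gradient estimate gives $|\nabla H^{-1}| = |\nabla H|/H^2 \leq \gamma_1$, so $H^{-1}$ is intrinsically $\gamma_1$-Lipschitz and $\tilde H$ stays in $[1/2, 2]$ on an intrinsic ball of definite radius around $x_j$. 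Together with the pointwise bound on $\nabla^2 A$ this produces uniform $C^{2,\alpha}$ control on $\tilde A^{(j)}$, so a Hamilton-type compactness theorem for pointed mean curvature flow extracts a smooth subsequential limit $\bar F_\infty$: a complete, connected, weakly convex, uniformly two-convex ancient solution satisfying both derivative estimates, with $\bar H(x_\infty, 0) = 1$.

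Cauchy--Schwarz gives the scale-invariant bound $Z |F(x)-F(y)| \leq 2$, so $\tilde Z_j \to \infty$ forces $|\tilde F^{(j)}(x_j) - \tilde F^{(j)}(y_j)| \to 0$ and $\tilde F^{(j)}(y_j) \to 0$ in $\R^{n+1}$. In \emph{Case (a)}, where the intrinsic distance $d_{\tilde g^{(j)}}(x_j, y_j)$ stays bounded, a subsequence gives $y_j \to y_\infty$ in the intrinsic limit with $\bar F_\infty(y_\infty) = \bar F_\infty(x_\infty) = 0$; Sacksteder applied to $\bar F_\infty$ forces $y_\infty = x_\infty$, and a Taylor expansion of $\bar F_\infty$ at $x_\infty$ yields $\tilde Z_j \to \bar A_\infty(x_\infty)(v,v) \leq |\bar A_\infty(x_\infty)| < \infty$, contradicting $\tilde Z_j \to \infty$. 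In \emph{Case (b)}, where $d_{\tilde g^{(j)}}(x_j, y_j) \to \infty$, I would extract a second pointed intrinsic limit $\bar F_\infty'$ based at $y_j$ (after a further rescaling when $\tilde H(y_j)$ stays bounded between positive constants, or allowing the limit to be a hyperplane when $\tilde H(y_j) \to 0$). Both $\bar F_\infty$ and $\bar F_\infty'$ are complete embedded convex hypersurface boundaries passing through $0$; since the ambient Hausdorff limit of the convex bodies $\tilde \Omega_j$ is a single connected convex body $\Omega_\infty$, both pointed limits must parameterize (subsets of) the common $\partial \Omega_\infty$, and the injectivity of the embedding $\partial \Omega_\infty \hookrightarrow \R^{n+1}$ forces the basepoints $x_\infty, y_\infty$ to coincide as points of $\partial \Omega_\infty$. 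Cheeger--Gromov convergence then pins $x_j$ and $y_j$ to the same intrinsic limit, in contradiction with $d_{\tilde g^{(j)}}(x_j, y_j) \to \infty$.

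The hard part will be Case (b): orchestrating the second pointed limit, especially the sub-case $\tilde H(y_j) \to 0$ in which $\bar F_\infty'$ degenerates to a flat hyperplane tangent to the cap contributed by $\bar F_\infty$ (so that the limit convex body is a half-space tangent to a strictly curved smooth cap, which is incompatible with $\bar H = 1$), and making rigorous the identification of the two pointed intrinsic limits with the single ambient Hausdorff limit. This rests on the uniqueness of a convex body extending a given smooth convex sheet, which one justifies using the connectedness and convexity of $\partial \Omega_\infty$ together with smooth convergence away from the collision point.
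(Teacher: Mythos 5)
Your reduction to a bound $Z \leq C H$ and your Case (a) are fine (that part is the standard ``interior sphere at nearly coincident points'' argument and works under the curvature and derivative control you set up). The genuine gap is Case (b), and it is not a technical loose end but the entire content of the theorem. When $d_{\tilde g^{(j)}}(x_j,y_j)\to\infty$ while the extrinsic distance tends to $0$, you extract two pointed smooth limits and then assert that both must parameterize subsets of a single embedded boundary $\d\Omega_\infty$, so that injectivity forces $x_\infty=y_\infty$ and Cheeger--Gromov convergence ``pins'' $x_j$ and $y_j$ intrinsically together. That step assumes exactly what is to be proved. Local smooth convergence near $x_j$ and near $y_j$ only produces two limit sheets passing through (nearly) the same ambient point; in a collapsing scenario (a slab- or pancake-type degeneration) these are two distinct sheets of $\d\tilde\Omega_j$ coming together, the Hausdorff limit of the convex bodies $\tilde\Omega_j$ can be degenerate (lower-dimensional), there is no single smooth embedded $\d\Omega_\infty$ containing both limits, and no contradiction with $d_{\tilde g^{(j)}}(x_j,y_j)\to\infty$ arises. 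Your flagged sub-case ($\tilde H(y_j)\to 0$, flat sheet tangent to a curved cap) is likewise not contradictory by itself: a convex boundary can perfectly well contain a cap with $H=1$ at the origin and, extrinsically nearby but intrinsically far away, a nearly flat sheet -- ruling this out requires real structural input. You also leave the sub-case $\tilde H(y_j)\to\infty$ unaddressed, since the gradient estimate gives no control on $H$ at intrinsically distant points.

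A symptom of the gap is that your argument never actually uses uniform two-convexity (the constant $\beta$ appears only in the statement of the target bound), and uses ancientness only to have a flow around for compactness; but weak convexity plus derivative estimates alone cannot carry the identification step you need, and two-convexity is where the paper does its real work. The paper's route is entirely different: it first upgrades to $\frac{1}{n-1}$-two-convexity (Proposition \ref{optimal_two_convexity}), then proves neck detection (smallness of $\lambda_1/H$ forces an $(\varepsilon,L)$-neck, using two-convexity and the Schur lemma to identify the cylindrical cross-section) and neck continuation, yielding the structure theorems (Theorems \ref{neck_cap_noncompact} and \ref{neck_cap_compact}): the solution is a tube of controlled cross-section capped off by one or two regions with uniform curvature, diameter, and convexity bounds (ancientness enters through the rigidity of convex ancient solutions in the compact case and through preservation of noncollapsing forward in time). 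Noncollapsing then follows from a direct local argument on caps (Theorem \ref{noncollapsed_hypersurface} and Corollary \ref{local_noncollapsing}) and from closeness to the cylinder on necks. It is precisely this tube-plus-cap structure that excludes two intrinsically distant sheets from being extrinsically close, i.e.\ that disposes of your Case (b); without an ingredient of this strength, the Andrews-quantity blow-up argument cannot close, since for an ancient solution there is no initial time at which $Z/H$ is known to be finite and the maximum principle for $Z$ has nothing to propagate.
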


Note that by work of Haslhofer and Kleiner \cite{HK17A} mean-convex ancient solutions of the mean curvature flow that are noncollapsed are consequently weakly convex. If we combine the main result above with the uniqueness results of \cite{BC18} and \cite{ADS18}, we have the following corollary. 

\begin{corollary}
Suppose $n \geq 3$ and let $F_t : M \to \R^{n+1}$, $t \in (-\infty, 0]$, be an $n$-dimensional, complete, connected ancient solution to the mean curvature flow in $\R^{n+1}$ that is weakly convex, uniformly two-convex, and satisfies pointwise derivative estimates: $|\nabla A| \leq \gamma_1 |H|^2$ and $|\nabla^2 A| \leq \gamma_2 |H|^3$. Then the solution is either a family of shrinking round spheres, a family of shrinking round cylinders, a translating bowl soliton, an ancient oval, or a static flat hyperplane. 
\end{corollary}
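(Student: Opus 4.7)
The plan is to apply Theorem \ref{main} to promote the hypotheses to include noncollapsing and then invoke the established classification of noncollapsed, uniformly two-convex ancient mean curvature flows. By Theorem \ref{main}, any solution satisfying the hypotheses of the corollary is noncollapsed, so once that result is in hand the corollary reduces essentially to an assembly of known classification theorems.

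First I would dispose of the degenerate case. Weak convexity means all principal curvatures $\lambda_i$ are nonnegative, so $H = \sum_i \lambda_i \geq 0$; if $H$ vanishes at a single spacetime point, then the strong parabolic maximum principle applied to the evolution equation $\d_t H = \Delta H + |A|^2 H$ forces $H \equiv 0$ on $M \times (-\infty, 0]$. Nonnegativity of the $\lambda_i$ then gives $A \equiv 0$, and completeness and connectedness of $M$ identify the solution as a static flat hyperplane in $\R^{n+1}$.

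Otherwise $H > 0$ everywhere on $M \times (-\infty, 0]$, and I would split on whether $M$ is compact. In the compact case, $(M, F_t)$ is a compact, ancient, uniformly two-convex, noncollapsed solution to mean curvature flow, and the classification of Angenent--Daskalopoulos--Sesum \cite{ADS18, ADS19} identifies it as either a family of shrinking round spheres or an ancient oval. In the noncompact case, $(M, F_t)$ is a complete, noncompact, ancient, uniformly two-convex, noncollapsed solution, and the classification of Brendle--Choi \cite{BC18, BC19} identifies it as either a family of shrinking round cylinders or a translating bowl soliton; uniform two-convexity rules out cylindrical splittings $\R^k \times S^{n-k}$ with $k \geq 2$, leaving only $\R \times S^{n-1}$.

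The only substantive obstacle lies in Theorem \ref{main} itself --- establishing noncollapsing from weak convexity, uniform two-convexity, and the two derivative pinching bounds. Once that theorem is granted, the remaining argument is a short case analysis together with verification that the hypotheses of \cite{ADS18, ADS19, BC18, BC19} are met in each branch, with no additional serious work required.
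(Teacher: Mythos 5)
Your proposal is correct and takes essentially the same route as the paper: the paper's entire proof of this corollary is to combine Theorem \ref{main} with the classification results of Brendle--Choi \cite{BC18} in the noncompact case and Angenent--Daskalopoulos--Sesum \cite{ADS18} in the compact case, exactly as you do. The degenerate possibilities (static hyperplane and shrinking cylinder) are handled in the paper by the strong maximum principle splitting argument in Section 2 --- since \cite{BC18} assumes strict convexity, the cylinder arises from the case where the solution splits off a line --- which is the same bookkeeping your case analysis performs.
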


Broadly, our work fits within an ongoing program aimed at characterizing self-similar and ancient solutions of the Ricci flow and the mean curvature flow. See for example \cite{Bre13, Has15, BL17, BC18, BC19, ADS18, ADS19}, to name some recent related works. Specifically, however, we use the theorem above and its corollary to classify singularity models of the mean curvature flow in higher codimension. 

The work of Brendle and Choi \cite{BC18} shows the only possible blow-up models at the first singular time for closed, \textit{embedded}, two-convex hypersurfaces evolving under the mean curvature flow are the spheres, cylinders, and bowls. The embeddedness assumption ensures that blow-up limits are noncollapsed (see \cite{Whi00, Whi03} or \cite{HK17A}), in addition to being ancient, weakly convex, and uniformly two-convex. Noncollapsing of blow-ups is an incredibly useful assumption, as demonstrated, for example, by the efficient works of Haslhofer and Kleiner \cite{HK17A,HK17B}. The work of Huisken and Sinestrari \cite{HS99,HS09}, however, shows in higher dimensions that noncollapsing is not necessary for the analysis of solutions when one has pointwise derivative estimates and a pinching estimate. For instance, see \cite{BL17}, where, under such assumptions Bourni and Langford proved a uniqueness theorem for translators of the mean curvature flow. 

By replacing the noncollapsed assumption in the main result of \cite{BC18} by an assumption of derivative estimates, we can classify blow-ups models for pinched solutions of the mean curvature flow in higher codimension, where embeddedness is no longer preserved. Our work also applies to immersed solutions in codimension one. Indeed, the blow-ups of closed, \textit{immersed}, two-convex solutions of the mean curvature flow are still ancient, weakly convex, and uniformly two-convex. In fact, by Theorem 5.3 in \cite{HS09}, blow-ups satisfy the cylindrical estimate $|A|^2 \leq \frac{1}{n-1}|H|^2$, which implies weak convexity and uniform two-convexity. By Theorems 6.1 and 6.3 in \cite{HS09}, blow-ups also satisfy $|\nabla A| \leq \gamma_1 |H|^2$ and $|\nabla^2 A| \leq \gamma_2 |H|^3$. Combining these results with the main theorem above allows us to drop the embeddedness assumption of Corollary 1.2 in \cite{BC18}. 

\begin{corollary}\label{codim1cor}
Let $n \geq 3$. Consider an arbitrary closed, immersed, two-convex hypersurface in $\R^{n+1}$, and evolve it by mean curvature flow. At the first singular time, the only possible blow-up limits are shrinking round spheres, shrinking round cylinders, and translating bowl solitons.
\end{corollary}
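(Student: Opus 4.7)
The plan is to reduce the corollary to the Brendle–Choi classification of first-singular-time blow-ups \cite{BC18}, using Theorem~\ref{main} to supply the missing noncollapsing hypothesis. Starting from a closed, immersed, two-convex solution in $\R^{n+1}$, I would form a blow-up at the first singular time $T$ in the standard way: fix spacetime points $(p_k, t_k)$ with $t_k \uparrow T$ and $Q_k := |H(p_k, t_k)| \to \infty$, parabolically rescale by $Q_k$ around $(p_k, t_k)$, and extract via smooth compactness a complete, connected ancient limit $F_\infty : M_\infty \to \R^{n+1}$, normalized so that $|H_\infty(p_\infty, 0)| = 1$. The local uniform curvature bounds needed to run compactness come from the Huisken–Sinestrari estimates invoked next.

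Then I would verify that $F_\infty$ satisfies every hypothesis of Theorem~\ref{main}. By the cylindrical estimate (Theorem 5.3 of \cite{HS09}), the original flow obeys $|A|^2 \leq \tfrac{1}{n-1}|H|^2 + C$ for a constant depending only on the initial data; under the rescaling the additive error is multiplied by $Q_k^{-2} \to 0$, so $F_\infty$ satisfies $|A_\infty|^2 \leq \tfrac{1}{n-1}|H_\infty|^2$ pointwise, which forces every principal curvature to be nonnegative and the sum of the two smallest to be a definite positive multiple of $|H_\infty|$, i.e.\ weak convexity and uniform two-convexity. The gradient and Hessian estimates of Theorems 6.1 and 6.3 of \cite{HS09} pass to the limit in the same way, giving $|\nabla A_\infty| \leq \gamma_1 |H_\infty|^2$ and $|\nabla^2 A_\infty| \leq \gamma_2 |H_\infty|^3$ on the blow-up. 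At this point Theorem~\ref{main} applies and produces the noncollapsing.

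The blow-up $F_\infty$ is now a complete, connected, ancient, noncollapsed, weakly convex, uniformly two-convex solution in $\R^{n+1}$, i.e.\ precisely the class treated by Brendle and Choi. Corollary 1.2 of \cite{BC18} classifies such first-singular-time blow-ups of a closed mean curvature flow as shrinking round spheres, shrinking round cylinders, or translating bowl solitons (the ancient oval is excluded in the first-singular-time setting of that corollary), which is exactly the assertion of Corollary~\ref{codim1cor}. The step I expect to be most delicate is not within the present paper but in confirming that the cylindrical, gradient, and Hessian estimates of \cite{HS09} hold in the sharp form required in the purely immersed (as opposed to embedded) setting, so that the additive lower-order terms genuinely scale away in the blow-up; once that is granted, the corollary is a direct concatenation of Theorem~\ref{main} with the Brendle–Choi classification.
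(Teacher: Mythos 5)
The bulk of your argument---extracting an ancient limit, letting the additive constants in the Huisken--Sinestrari cylindrical and derivative estimates scale away so the blow-up satisfies $|A|^2 \leq \tfrac{1}{n-1}|H|^2$, $|\nabla A| \leq \gamma_1|H|^2$, $|\nabla^2 A|\leq \gamma_2 |H|^3$, and invoking Theorem~\ref{main} to get noncollapsing---is exactly the paper's route, and the point you flag as delicate is not an obstacle: the estimates of Theorems 5.3, 6.1, and 6.3 of \cite{HS09} are proved for immersed two-convex flows, which is precisely how the paper uses them. The genuine gap is your final step. You conclude by citing Corollary 1.2 of \cite{BC18}, but that corollary assumes the initial hypersurface is \emph{embedded}; embeddedness is what supplies noncollapsing of blow-ups in \cite{BC18} (via White / Haslhofer--Kleiner), and it is exactly the hypothesis the present corollary is designed to remove, so quoting it is circular and its hypotheses are not met here. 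Once noncollapsing is in hand, what you may legitimately quote are the classification theorems for ancient solutions themselves (the main theorem of \cite{BC18} in the noncompact case and \cite{ADS18} in the compact case, i.e.\ Corollary 1.2 of the present paper), but these leave the ancient oval on the list (the static plane is trivially excluded by the normalization $|H_\infty|(p_\infty,0)=1$), and your parenthetical claim that ``the ancient oval is excluded in the first-singular-time setting of that corollary'' is an assertion with no supporting argument in the immersed setting.

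The paper supplies the missing piece by a short case analysis. If the blow-up is not strictly convex, the strong maximum principle forces it to split off a line, and the cylindrical estimate together with the Schur lemma identifies it as a family of shrinking round cylinders. If the blow-up is compact, it is convex; since it arises as a limit of rescalings, the original immersed flow must itself become convex before the singular time, and Huisken's theorem \cite{Hui84} then shows the flow contracts to a round point, so any compact blow-up is a shrinking round sphere---this is what rules out the ancient oval. Only the noncompact, strictly convex case requires Theorem~\ref{main}, combined with the main (noncompact) uniqueness theorem of \cite{BC18}, yielding the cylinder or the translating bowl. With this case analysis in place of the citation of Corollary 1.2 of \cite{BC18}, your argument matches the paper's and is complete.
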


By replacing two-convexity with a stronger pinching assumption, we can show the above classification holds in higher codimension as well. Namely, we consider closed, $n$-dimensional initial data in $\R^{N}$ that satisfies a natural curvature pinching condition, $|H| > 0$ and $|A|^2 < c \, |H|^2$. This kind of pinching was first considered in $\cite{AB10}$, where the authors showed these inequalities are preserved by the flow if $c \leq \frac{4}{3n}$. Under these pinching conditions, Andrews and Baker \cite{AB10} and Nguyen \cite{Ngu18} have suitably extended to higher codimension many of the main ideas in the important and impactful works of Huisken \cite{Hui84} on the flow of convex hypersurfaces and Huisken and Sinestrari \cite{HS99,HS09} on the flow of two-convex hypersurfaces. For more discussion, see the introduction in \cite{Naf19}. 

At present, we are interested in the pinching condition with $c = \frac{1}{n-2}$. If $n \geq 8$, then $\frac{1}{n-2} \leq \frac{4}{3n}$. Suppose $F : M \times [0, T) \to \R^N$ is a smooth, closed, $n$-dimensional solution of the mean curvature flow in $\R^N$ initially satisfying $|H| > 0$ and $|A|^2 < \frac{1}{n-2} |H|^2$. In codimension one, this pinching condition implies two-convexity of the hypersurface. It was first studied by Nguyen in \cite{Ngu18}. In direct analogy with the work of Huisken and Sinestrari, Nguyen proved the following cylindrical and derivative estimates: 
\begin{enumerate}
\item[$\bullet$] (Cylindrical Estimate.) For every constant $\eta > 0$, there exists a constant $C_{\eta} < \infty$, depending only upon initial data, such that the estimate
\[
|A|^2 \leq \Big(\frac{1}{n-1} + \eta\Big)|H|^2 + C_{\eta}
\]
holds for all $t \in [0, T)$. 
\item[$\bullet$] (Derivative Estimates.) There exist constants $\gamma_1, \gamma_2, C_1, C_2 < \infty$, depending only upon the initial data, such that the estimates
\begin{align*}
|\nabla A| &\leq \gamma_1 |H|^2 + C_1, \\
 |\nabla^2 A| & \leq \gamma_2 |H|^3 + C_2
\end{align*}
hold for all $t \in [0, T)$. 
\end{enumerate}

In addition to the estimates above, in our earlier work \cite{Naf19} we proved a new planarity estimate for pinched solutions to the mean curvature flow in higher codimension. We considered a tensor, 
\[
\hat A_{ij} = A_{ij} - \frac{\langle A_{ij}, H \rangle}{|H|^2} H,
\]
which consists of the components of the second fundamental form that are orthogonal to the direction of the mean curvature vector. Under the pinching assumption, we showed that $\hat A$ vanishes if and only if the solution is codimension one. Then we proved the following estimate:
\begin{enumerate}
\item[$\bullet$] (Planarity Estimate.) There exists a constant $\sigma > 0$ and a constant $C < \infty$, depending only upon the initial data, such that the estimate
\[
|\hat A|^2 \leq C|H|^{2 - \sigma}
\]
holds for all $t \in [0, T)$. 
\end{enumerate}
We can include the dimensions $n = 5, 6$, and $7$ if we strengthen our pinching assumption. In fact, for $n \geq 5$, the cylindrical estimate holds if $c \leq \min\{ \frac{1}{n-2}, \frac{4}{3n}\}$; for $n \geq 2$ the derivative estimates hold if $c \leq \frac{4}{3n}$; and for $n \geq 5$ the planarity estimate holds if $c \leq \min\{\frac{3(n+1)}{2n(n+2)}, \frac{4}{3n}\}$. The constants $\frac{4}{3n}$ and $\frac{3(n+1)}{2n(n+2)}$ are technical constants that arise in the proofs of \cite{AB10} and \cite{Naf19}. If $n \leq 4$, then $\frac{4}{3n} \leq \frac{1}{n-1}$; so only spherical singularities can occur (see \cite{AB10}). 

In any case, if all three estimates above hold for a solution to the mean curvature flow, then the blow-up limits must be ancient, codimension one, and satisfy the estimates $|A|^2 \leq \frac{1}{n-1} |H|^2,  |\nabla A| \leq \gamma_1 |H|^2, |\nabla^2 A| \leq \gamma_2 |H|^3$, precisely as for immersed solutions in codimension one. By the main theorem above, this again gives the following classification. 

\begin{corollary}\label{codimNcor}
Let $n \geq 5$ and $N > n$. Let $c_n = \frac{1}{n-2}$ if $n \geq 8$ and $c_n = \frac{3(n+1)}{2n(n+2)}$ if $n = 5, 6,$ or $7$.  Consider a closed, $n$-dimensional solution to the mean curvature flow in $\R^N$ initially satisfying $|H| > 0$ and $|A|^2 < c_n|H|^2$. At the first singular time, the only possible blow-up limits are codimension one shrinking round spheres, shrinking round cylinders, and translating bowl solitons.
\end{corollary}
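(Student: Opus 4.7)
My plan is to mimic the proof of Corollary \ref{codim1cor} while using the planarity estimate of \cite{Naf19} to reduce the higher-codimension problem to codimension one, after which Corollary 1.2 of the present paper applies. Since the pinching $|A|^2 < c_n|H|^2$ forces $|H| \to \infty$ wherever $|A|$ blows up, I would choose a sequence of space-time points $(p_k, t_k)$ with $t_k \nearrow T$ and $\lambda_k := |H(p_k, t_k)| \to \infty$, via a standard point-picking procedure designed to produce a complete ancient limit. I then consider the parabolically rescaled flows
\[
\tilde F_k(p, s) := \lambda_k \bigl( F(p, t_k + \lambda_k^{-2} s) - F(p_k, t_k) \bigr),
\]
normalized so that $|\tilde H_k(p_k, 0)| = 1$.

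Under parabolic rescaling, the cylindrical, derivative, and planarity estimates become
\[
|\tilde A_k|^2 \leq \bigl(\tfrac{1}{n-1}+\eta\bigr)|\tilde H_k|^2 + \lambda_k^{-2} C_\eta,
\]
\[
|\tilde\nabla \tilde A_k| \leq \gamma_1 |\tilde H_k|^2 + \lambda_k^{-2} C_1, \quad |\tilde\nabla^2 \tilde A_k| \leq \gamma_2 |\tilde H_k|^3 + \lambda_k^{-3} C_2,
\]
\[
|\tilde{\hat A}_k|^2 \leq C \lambda_k^{-\sigma} |\tilde H_k|^{2-\sigma}.
\]
The additive error terms all vanish as $\lambda_k \to \infty$, so these bounds yield uniform local curvature and higher-derivative control on the $\tilde F_k$. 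Hamilton's compactness theorem then produces a smooth subsequential limit $\tilde F_\infty \colon M_\infty \times (-T_\infty, 0] \to \R^N$ which is a complete, connected, ancient mean curvature flow satisfying
\[
|\tilde A_\infty|^2 \leq \tfrac{1}{n-1}|\tilde H_\infty|^2, \quad |\tilde\nabla\tilde A_\infty| \leq \gamma_1 |\tilde H_\infty|^2, \quad |\tilde\nabla^2\tilde A_\infty| \leq \gamma_2 |\tilde H_\infty|^3,
\]
and, crucially, $\tilde{\hat A}_\infty \equiv 0$.

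The vanishing $\tilde{\hat A}_\infty \equiv 0$, combined with the characterization of $\hat A$ established in \cite{Naf19}, implies that the image of $\tilde F_\infty$ lies in some affine hyperplane $\R^{n+1} \subset \R^N$, so the limit is codimension one. In codimension one, $|A_\infty|^2 \leq \tfrac{1}{n-1}|H_\infty|^2$ with $H_\infty > 0$ (which persists by the strong maximum principle) implies weak convexity and uniform two-convexity via a standard pointwise argument on the eigenvalues of the shape operator. Thus $\tilde F_\infty$ satisfies every hypothesis of Corollary 1.2 and must therefore be a shrinking sphere, a shrinking cylinder, a translating bowl soliton, an ancient oval, or a static hyperplane. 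The static hyperplane is ruled out by $|\tilde H_\infty(p_\infty, 0)| = 1$, and the ancient oval is ruled out by the point-picking choice, which guarantees that the limit is noncompact.

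The main obstacle I expect is the compactness and limit argument: passing the planarity estimate to the limit to obtain $\tilde{\hat A}_\infty \equiv 0$, while simultaneously establishing smooth convergence and completeness of the limit from the rescaled curvature and derivative bounds. Once the limit is known to be a codimension one, weakly convex, uniformly two-convex ancient solution satisfying the derivative estimates, the conclusion follows immediately from Corollary 1.2 of the present paper.
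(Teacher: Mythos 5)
Your overall strategy (rescale, let the additive constants in Nguyen's cylindrical and derivative estimates and in the planarity estimate die, conclude the limit is codimension one with $|A|^2 \le \tfrac{1}{n-1}|H|^2$, hence weakly convex and uniformly two-convex, then invoke Corollary 1.2) is exactly the route the paper takes. But there is a genuine gap at the very last step: you dispose of the ancient oval by asserting that ``the point-picking choice guarantees that the limit is noncompact.'' This cannot work. The corollary classifies \emph{all} blow-up limits at the first singular time, and compact blow-up limits certainly occur --- the shrinking round sphere is one, e.g.\ at a type-I spherical singularity --- so no point-picking procedure can force noncompactness, and even if you restricted attention to limits produced by one particular selection you would not have proved the statement as asserted. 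You need an argument that applies to every compact blow-up limit and shows it must be a round sphere rather than an oval.

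The paper closes this case differently: if a blow-up limit is compact, it cannot attain equality in the cylindrical estimate (equality forces splitting a line by the strong maximum principle, contradicting compactness), so it is strictly $\tfrac{1}{n-1}$-pinched, $|A|^2 < \tfrac{1}{n-1}|H|^2$. Since the blow-up approximates the original flow near the singular time, Nguyen's work \cite{Ngu18} then shows the original solution itself becomes $\tfrac{1}{n-1}$-pinched, and Andrews--Baker \cite{AB10} (Huisken \cite{Hui84} in the codimension one Corollary \ref{codim1cor}) forces the flow to shrink to a round point; hence every blow-up is a family of shrinking spheres, and in particular no ancient oval can arise. If you replace your noncompactness claim with this improvement-of-pinching argument in the compact case, the rest of your proposal is sound and matches the paper's proof; the compactness/limit technicalities you flag are handled by the standard blow-up machinery together with the local curvature control furnished by the gradient estimate.
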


Let us briefly explain the arguments needed to show that Corollaries \ref{codim1cor} and \ref{codimNcor} follow from Theorem \ref{main}. For both immersed, two-convex solutions in codimension one and $\frac{1}{n-2}$-pinched solutions in higher codimension, blow-ups satisfy the cylindrical estimate $|A|^2 \leq \frac{1}{n-1} |H|^2$. Now by the strong maximum principle, a weakly convex ancient solution that is not strictly convex must split off a line. If a blow-up splits off a line, the cylindrical estimate implies the remaining principal curvatures are all equal and hence, by the Schur lemma, the blow-up must be a family of shrinking cylinders. In the immersed and codimension one setting, if the blow-up is compact, then it is convex, and the original flow must become convex. The result of Huiksen \cite{Hui84} then shows the blow-up is a family of shrinking spheres. In the $\frac{1}{n-2}$-pinched and higher codimension setting, if the blow-up is compact, it is $\frac{1}{n-1}$-pinched, $|A|^2 < \frac{1}{n-1} |H|^2$. Then the work of Nguyen \cite{Ngu18} shows the original flow is $\frac{1}{n-1}$-pinched, and the work of Andrews and Baker \cite{AB10} implies the blow-up is a family of shrinking spheres. Note that \cite{Hui84} and \cite{Ngu18} preclude the possibility of an ancient oval arising as a blow-up limit. 

The remaining case to consider is when the blow-up is noncompact and strictly convex, which is addressed by Theorem \ref{main} and the main result of \cite{BC18}. For the sake of generality, we have replaced cylindrical estimate in our assumptions with an assumption of uniform two-convexity. This way the convexity assumptions in our main theorem match the convexity assumptions in codimension one. To show uniform two-convexity suffices for the conclusions of the theorem, we prove, in Proposition \ref{optimal_two_convexity} below, that weakly convex, uniformly two-convex, ancient solutions satisfying pointwise derivative estimates are in fact $\frac{1}{n-1}$-two-convex. That is, if $\lambda_1$ and $\lambda_2$ denote the smallest two eigenvalues of the second fundamental form and $H$ denotes the mean curvature, then $\lambda_1 + \lambda_2 \geq \frac{1}{n-1}H$. 

Finally, let us discuss the proof of the main theorem. Broadly, we will prove it in two steps. In the first step, we will adapt the tools and ideas developed by Huisken and Sinestrari to our ancient and convex setting. We will show that when the ancient solution is strictly convex (and not a family of shrinking round spheres), it has the structure of a long tube with either one or two convex caps attached, depending upon whether the solution is noncompact or compact. Many ideas carry over without much change; some are even a bit simpler in our setting (also here we do not deal with the difficulties arising from surgery). The structure theorems give diameter, mean curvature, and uniform convexity estiamtes independent of time. Once this is known, we can show each time slice of the ancient solution is $\alpha$-noncollapsed for a uniform choice of $\alpha$. 

The organization of this paper is as follows. In Section 2, we collect the various notations, definitions, and auxiliary results that will be used in subsequent sections. In Section 3, we prove that when the ancient solution is noncompact, it has the structure of a long tube with a convex cap attached. In Section 4, we prove a similar structure theorem for when the solution is compact. The work in these three sections follows the pioneering work of Huisken and Sinestrari in \cite{HS09}. In Section 5, we show the ancient solution is noncollapsed. In the Appendix, we include additional details for the proof of Proposition \ref{optimal_two_convexity}. \\

\textbf{Acknowledgments.} I am very grateful to my advisor, Simon Brendle, for his guidance, his encouragement, and for suggesting this problem. 


\section{Preliminaries}
In this section, we give some definitions and auxiliary results for the proof of our main theorem. Many of the statements and proofs in this section are to a certain extent standard after \cite{HS09} and only require reasonable adaptations of the analogous statements and proofs. For the convenience of the reader, we include these adaptations and some additional details below. 

Let us begin with some notation. Let $F : M \times (-\infty, 0] \to \R^{n+1}$ denote a (possibly immersed) ancient solution of the mean curvature flow. Let $(p, t) \in M \times (-\infty, 0]$ be a spacetime point. Since we are working with a hypersurface, from now on we will let $H$ denote the scalar mean curvature, as opposed to the mean curvature vector; $\nu$ denote the outward pointing normal vector; and $h = \langle A, -\nu \rangle$ denote the second fundamental form. Let $\lambda_1 \leq \dots \leq \lambda_n$ denote the principal curvatures. We say the solution is weakly convex if $\lambda_1 \geq 0$; strictly convex if $\lambda_1 > 0$; uniformly convex if $\lambda_1 \geq \beta H$ for some $\beta > 0$; and analogously for the notion of two-convexity. Let $g(t)$ denote the induced metric on $M$ by the immersion $F(\cdot, t)$ and $B_{g(t)}(p, r) \subset M$ the intrinsic ball of radius $r$ centered at $p$. We will also be interested in parabolic neighborhoods. Following the notation introduced on pp.189-190 in \cite{HS09}, we define $P(p, t, r, \theta)$ to be the set of space time points $(q, s)$ such that $q \in B_{g(t)}(p, r)$ and $s \in [t - \theta, t]$. For the purposes of rescaling, we also define 
\[
\hat P(p, t, L, \theta) := P\Big(p, t, \frac{n-1}{H(p,t)}L, \Big(\frac{n-1}{H(p,t)}\Big)^2\theta\Big).
\]

Throughout this section and following sections, we will only consider solutions of the mean curvature flow satisfying the hypotheses of Theorem \ref{main}.
\begin{definition} An $n$-dimensional ($n \geq 3)$ ancient solution $F : M  \times (-\infty, 0] \to \R^{n+1}$ satisfies $(\ast)$ if:
\begin{enumerate}
\item[$\bullet$] The solution is connected, complete, strictly convex, embedded, and uniformly two-convex.
\item[$\bullet$] The solution satisfies the pointwise derivative estimates $|\nabla A| \leq \gamma_1 |H|^2$ and $|\nabla^2 A| \leq \gamma_2 |H|^3$.
\end{enumerate}
\end{definition}
Notice we assume that the ancient solution is strictly convex. If the convexity is not strict, then by the strong maximum principle, the solution must split a line. It follows that the cross-section is a uniformly convex, complete, ancient solution of the mean curvature flow. Now if the cross-section contains a point where $\lambda_1 = 0$, then the entire solution must be flat and hence a static hyperplane. If the cross-section is strictly uniformly convex, then by work of Hamilton \cite{Ham94} and Huisken-Sinestari \cite{HS15}, the cross-section is a family of shrinking round spheres, and hence the ancient solution is a family of shrinking round cylinders. Of course both the static plane and shrinking round cylinder are noncollapsed, so it suffices to prove the main theorem assuming strict convexity. 

We also make an extra assumption of embeddedness in the definition above. In higher dimensions, strict convexity and completeness imply $F_t(M)$ is the boundary of a convex body in $\R^{n+1}$ (see \cite{Sac60}). The boundary of a strictly convex body in $\R^{n+1}$ is homeomorphic to either $S^n$ or $\R^n$, and in particular is simply connected. Since $F_t$ is a covering, this implies it is an embedding, and so we may assume this in $(\ast)$ as well. In this case, it is equivalent to work with the level sets $M_t = F(M, t)$, for $t \in (-\infty , 0]$. By abuse of notation, we will sometimes identify the points $p \in M$ and $F(p, t) \in M_t$. Let $\mathcal M = \{M_t\}_{t \in (-\infty, 0]}$. For brevity, we will say $\mathcal M$ (or sometimes $F : M \times (-\infty, 0] \to \R^{n+1}$) with these properties satisfies $(\ast)$. 

We will use the following definition to characterize neck regions. Since we are not doing surgery, we will not need precise parametrizations of neck regions, as originally introduced by Hamilton in \cite{Ham97} and used extensively in \cite{HS09}. (One great property of Hamilton's (intrinsic) constant mean curvature foliation of neck regions is that it is canonical! However, in the extrinsic setting, the following definition is perhaps a bit simpler). 
\begin{definition}\label{necks}
Suppose $F: M \to  \R^{n+1}$ is an embedding of a convex hypersurface. Given constants $\varepsilon, L > 0$, we say a point $p \in M$ lies at the center of an $(\varepsilon, L)$-neck if, after rescaling so that $(n-1)H(p)^{-1} = 1$, there exist an embedded round cylinder $\Sigma := S^{n-1} \times [-L, L] \subset \R^{n+1}$ (of radius 1) and a function $u : \Sigma \to \R$, with the following properties:
\begin{enumerate}
\item[$\bullet$] We have $\{x + u(x) \nu_{\Sigma}(x) : x \in \Sigma \} \subset F(M)$ and $||u||_{C^{10}(\Sigma)} \leq \varepsilon$. 
\item[$\bullet$] $F(p) \in \{x + u(x) \nu_{\Sigma}(x) : x \in S^{n-1} \times \{0\}\, \}$, i.e. the point $p$ lies on the central sphere. 
\end{enumerate}
Let $N := F^{-1}(\{x + u(x) \nu_{\Sigma}(x) : x \in \Sigma \})$. We will say $N$ is an $(\varepsilon, L)$-neck and has length $2L$. Up to a choice of orientation, there exists a unit vector $\omega \in S^n$ which is tangent to the axis of $\Sigma$. This defines a height function $y : M \to \R$ by $y(q) = \langle F(q) - F(p), \omega \rangle$. The sets $S_y := \{q \in N : y(q) = y \}$ for $y \in [-L, L]$ are the cross-sectional spheres of $N$. The axis of the neck $N$ refers to either of the two unit vectors parallel to the axis of $\Sigma$. 
\end{definition}

In the terminology introduced in Section 3 of \cite{HS09}, Definition \ref{necks} defines a ``geometric neck''; i.e. a neck that is parametrized by a cylinder (in this case, as a graph). This ensures the restriction of the embedding $F$ to the region $N \subset M$ is close to the standard embedding of a cylinder into $\R^{n+1}$. For the detection of necks however, it is much simpler to check if the curvature is close to that of a cylinder. These ideas go back to Hamilton's work on necks in \cite{Ham97}. We will use the following proposition, which is a direct consequence of Proposition 3.5 in \cite{HS09}. 

\begin{proposition}\label{curvature_necks}
Suppose $F : M \to \R^{n+1}$ is an embedding of a convex hypersurface. Given constants $\varepsilon_0 \in (0, \frac{1}{n})$ and $L \geq 100$, there exists $\varepsilon_1 \in (0, \varepsilon_0)$, depending only upon $n$, $L$, and $\varepsilon_0$, such that the following holds. Suppose $p \in M$ satisfies: 
\begin{enumerate}
\item[$\bullet$] $\lambda_1(p) \leq \varepsilon_1 H(p)$ and $\lambda_n(p) - \lambda_2(p) \leq \varepsilon_1 H(p)$;
\item[$\bullet$] For every $q\in B_g(p, (L+10)\frac{n-1}{H(p)})$, $\sum_{k =1}^8 H(p)^{-k-1} |\nabla^k h(q) | \leq \varepsilon_1$. 
\end{enumerate}
Then $p$ lies at the center of an $(\varepsilon_0, L)$-neck $N$.
\end{proposition}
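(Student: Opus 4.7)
The plan is to reduce to Proposition 3.5 of \cite{HS09}, which converts pointwise curvature information close to cylindrical values into a geometric graphical neck, and then to upgrade the regularity of the resulting graph to $C^{10}$ using the additional derivative bounds. First I would rescale so that $(n-1)H(p)^{-1} = 1$, i.e.\ $H(p) = n-1$. Under this normalization, the hypotheses $\lambda_1(p) \leq \varepsilon_1 H(p)$ and $\lambda_n(p) - \lambda_2(p) \leq \varepsilon_1 H(p)$ force the principal curvatures at $p$ to be close to $(0,1,\ldots,1)$: the smallest eigenvalue is near $0$, while the remaining $n-1$ eigenvalues are pinched together with sum close to $n-1$. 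The uniform bound $\sum_{k=1}^8 H(p)^{-k-1}|\nabla^k h(q)| \leq \varepsilon_1$ on $B_g(p,(L+10)(n-1)/H(p))$ then lets one integrate along geodesics issued from $p$ to propagate this cylindrical curvature profile to every point in the rescaled ball of radius $L+10$. These are precisely the inputs of Proposition 3.5 in \cite{HS09}, which delivers a cylindrical axis direction $\omega$ (essentially the eigendirection of $\lambda_1$ at $p$) and a graphical parametrization of $F(M)$ over a standard cylinder $\Sigma = S^{n-1} \times [-L,L]$ with small $C^0$ error.

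The second step is to promote the $C^0$ closeness to $C^{10}$ closeness of the graph function $u$. Writing $F(q) = x + u(x)\nu_\Sigma(x)$ and using the Gauss--Weingarten identities, the bounds on $\nabla^k h$ for $k = 0,1,\ldots,8$ translate into bounds on covariant derivatives of $F$ up to order $10$, and thence into bounds on $u$ up to order $10$, once $u$ is known to be small in $C^0$ and the linearized graph equation is uniformly elliptic near the flat cylinder. A standard bootstrap produces $\|u\|_{C^{10}(\Sigma)} \leq \varepsilon_0$, provided $\varepsilon_1$ was chosen sufficiently small depending on $n, L,$ and $\varepsilon_0$. The basepoint on the axis is then chosen so that $F(p)$ lies on the central sphere $S^{n-1} \times \{0\}$; this is possible because $F(p)$ projects to a point near the axis, thanks to the cylindrical eigenstructure at $p$, and it guarantees that $p$ genuinely sits at the center of the resulting neck.

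The main obstacle I expect is essentially bookkeeping: one must verify that the geodesic propagation of cylindrical curvature really covers a full $(\varepsilon_0, L)$-neck rather than a shorter subregion, and that the $C^k$ control on $h$ transfers cleanly to $C^{k+2}$ control on $u$ without losing uniformity in the constants. The extra buffer of radius $10$ in the hypothesis $B_g(p,(L+10)(n-1)/H(p))$ is present precisely to absorb the discrepancy between intrinsic distances in $M$ and axial coordinates on the model cylinder $\Sigma$, a loss that is already tracked quantitatively in the proof of Proposition 3.5 of \cite{HS09}. Aside from these technicalities, the statement should follow by repackaging the cited proposition in the slightly more convenient $C^{10}$-graph language of Definition \ref{necks}.
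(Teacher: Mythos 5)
Your proposal matches the paper's approach: the paper gives no independent proof, stating only that the hypotheses mean $p$ lies at the center of an $(\varepsilon_1, L+10)$-extrinsic curvature neck and that the conclusion is then a direct consequence of Proposition 3.5 in \cite{HS09}, which is exactly the reduction you describe (propagating the cylindrical curvature profile over the ball via the derivative bounds and converting the curvature neck into a graphical neck with $C^{10}$ control of $u$). Your extra bookkeeping about upgrading regularity and the radius-$10$ buffer is consistent with how that cited proposition is used here.
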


In the language of \cite{HS09}, a point $p \in M$ that satisfies the two properties of the above proposition is said to lie at the center of an $(\varepsilon_1, L + 10)$-extrinsic ``curvature neck''. The above proposition shows there is little difference between curvature necks and the geometric necks defined above. 

Next, we give two standard lemmas concerning the control of curvature. The first shows that the curvature at one point controls the curvature of all points in a suitable parabolic neighborhood (cf. Lemma 6.6 in \cite{HS09}).

\begin{lemma}\label{curvature_control}
Suppose $\mathcal M$ satisfies $(\ast)$. Let $(p_0, t_0)$ be a spacetime point. Then there exists a constant $\hat r := \hat r(n, \gamma_1, \gamma_2) > 0$ such that for every $(p,t) \in \hat P(p_0, t_0, \hat r, \hat r^2)$ we have
\[
\frac{1}{4} H(p_0, t_0) \leq H(p,t) \leq 4 H(p_0, t_0).
\] 
\end{lemma}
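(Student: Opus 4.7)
The plan is to reduce the claim to a direct integration of the pointwise derivative estimates; no continuity or bootstrap argument will be needed. First I would derive uniform pointwise bounds on $|\nabla H|$ and $|\partial_t H|$ purely in terms of $H$. Taking a trace in $|\nabla A| \le \gamma_1 H^2$ gives $|\nabla H| \le \sqrt{n}\,\gamma_1 H^2$. Combining $|\nabla^2 A| \le \gamma_2 H^3$ (which controls $|\Delta H| \le n\gamma_2 H^3$) with the evolution equation $\partial_t H = \Delta H + |A|^2 H$ and the weak-convexity bound $|A|^2 \le H^2$ (an immediate consequence of $\lambda_i \ge 0$, since $H^2 = (\sum_i \lambda_i)^2 \ge \sum_i \lambda_i^2 = |A|^2$) produces $|\partial_t H| \le (n\gamma_2 + 1) H^3$. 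Both bounds hold at every point of spacetime.

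The cubic and quadratic growth in these bounds linearizes upon passing to reciprocals, giving the globally valid pointwise estimates $|\nabla(1/H)| \le \sqrt{n}\,\gamma_1$ and $|\partial_t(1/H^2)| \le 2(n\gamma_2 + 1)$. Now, given any $(p,t) \in \hat P(p_0, t_0, \hat r, \hat r^2)$, I would join $(p_0, t_0)$ to $(p, t)$ by a two-segment path: first a minimizing $g(t_0)$-geodesic from $p_0$ to $p$ of length $L \le (n-1)\hat r/H(p_0, t_0)$ (which exists by Hopf--Rinow, since $(M, g(t_0))$ is complete), and then a fixed-point time path from $(p, t_0)$ to $(p, t)$ of duration at most $((n-1)/H(p_0, t_0))^2\hat r^2$. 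Integrating the first reciprocal bound along the spatial leg controls $1/H(p, t_0) - 1/H(p_0, t_0)$ by a quantity of order $(n-1)\hat r / H(p_0, t_0)$, and integrating the second along the time leg controls $1/H(p, t)^2 - 1/H(p, t_0)^2$ by a quantity of order $(n-1)^2 \hat r^2 / H(p_0, t_0)^2$. Choosing $\hat r = \hat r(n, \gamma_1, \gamma_2) > 0$ small enough makes both errors small relative to the reference values $1/H(p_0, t_0)$ and $1/H(p_0, t_0)^2$, which is easily enough to force $H(p,t)/H(p_0, t_0) \in [1/4, 4]$.

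I do not expect any real obstacle: the whole content of the lemma is the elementary observation that $1/H$ and $1/H^2$ satisfy spacetime Lipschitz bounds with constants depending only on $n, \gamma_1, \gamma_2$, which integrate over a parabolic neighborhood of scale $\sim 1/H(p_0, t_0)$ to give an error of size $O(\hat r)$ on the reciprocal scale. Because the derivative estimates are global hypotheses, no maximum-principle, bootstrap, or compactness input is needed, and the proof is essentially a one-paragraph ODE calculation once the two reciprocal bounds have been written down.
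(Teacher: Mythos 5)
Your proposal is correct and follows essentially the same route as the paper's proof: bound $|\nabla H| \le c_1 H^2$ and $|\partial_t H| \le c_2 H^3$ from the hypotheses and the evolution equation, observe that $1/H$ and $1/H^2$ are then Lipschitz in space (at time $t_0$) and in time respectively, and integrate along a spatial geodesic followed by a fixed-point time segment, choosing $\hat r$ small. The only cosmetic difference is that the paper first rescales so that $H(p_0,t_0)=n-1$, while you keep the estimates scale-invariant throughout; the substance is identical.
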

\begin{proof}
By a rescaling, we may assume $H(p_0, t_0) = n-1$ so that $\hat P(p_0, t_0,  r,  \theta) = P(p_0, t_0,  r, \theta)$.  Recall that the mean curvature satisfies the evolution equation $\pdv{t} H = \Delta H + |h|^2 H$. From the estimates $|\nabla h| \leq \gamma_1 H^2$, $|\nabla^2 h| \leq \gamma_2 H^3$, and $|h| \leq H$, we get the estimates $|\nabla H| \leq c_1 H^2$ and $|\pdv{t} H| \leq c_2H^3$ for some constants $c_1:= c_1(n, \gamma_1)$ and $c_2 := c_2(n, \gamma_2)$. Consider a point $p \in M$ and let $\alpha : [0, 1] \to M$ be a minimizing geodesic with respect to $g(t_0)$ beginning at $p_0$ and ending at $p$. Assume $|\alpha'(s)| =  d_{g(t_0)}(p, p_0) \leq r_1$ for some $r_1 > 0$. By our gradient estimate,
\[
\Big|\frac{d}{ds} H^{-1}(\alpha(s), t_0)\Big| = \frac{\big|\langle \nabla H (\alpha(s), t_0), \alpha'(s)\rangle\big|}{H(\alpha(s), t_0)^2} \leq c_1.
\]
Integrating along the geodesic gives
\[
\Big|\frac{1}{H(p, t_0)} - \frac{1}{H(p_0, t_0)}\Big| \leq c_1 r_1. 
\]
With $H(p_0, t_0) = n-1$, this implies 
\[
\frac{1}{1+ c_1(n-1)r_1} \leq \frac{H(p, t_0)}{H(p_0, t_0)} \leq  \frac{1}{1 - c_1 (n-1)r_1}. 
\]
We choose $r_1$ sufficiently small depending upon $n$ and $c_1$ so  $\frac{1}{2}H(p_0, t_0) \leq H(p,t_0) \leq 2 H(p_0, t_0)$ for all $p \in B_{g(t_0)}(p_0, r_1)$. Next, fix some point $p \in B_{g(t_0)}(p_0, r_1)$. Consider a time $t \in [t_0 - r_2, t_0]$ for some $r_2 > 0$.  Let $\beta(s) = (1-s)t_0 + st$ for $s \in [0,1]$. By our estimate for the time derivative of $H$,
\[
\Big| \frac{d}{ds} H^{-2}(p, \beta(s))\Big| = \frac{\big|(\pdv{t} H)(p, \beta(s))\big|}{H(p, \beta(s))^3}|t - t_0| \leq c_2.
\]
Integrating gives
\[
\Big|\frac{1}{H(p, t)^2} - \frac{1}{H(p, t_0)^2}\Big| \leq c_2r_2. 
\]
Using $H(p, t_0) \in[\frac{1}{2}(n-1), 2(n-1)]$, we can rewrite this expression as
\[
\frac{1}{1 + 4c_2(n-1)^2 r_2} \leq \frac{H(p, t)^2}{H(p, t_0)^2} \leq \frac{1}{1 - 4c_2(n-1)^2r_2}
\]
As before, we can choose $r_2$ sufficiently small so that $\frac{1}{2} H(p, t_0) \leq H(p, t) \leq 2 H(p, t_0)$ for all $t \in [t_0 - r_2, t_0]$. Combining this with the estimate on $B_{g(t_0)}(p_0, r_1)$ and choosing $\hat r$ small depending upon $r_1$ and $r_2$ gives the desired estimate on $P(p_0, t_0, \hat r, \hat r^2)$. 
\end{proof}

Interior estimates now give pointwise estimates for all higher order derivatives of the second fundamental form (cf. Corollary 6.4 in \cite{HS09}). 

\begin{lemma}\label{derivative_estimates}
Suppose $\mathcal M$ satisfies $(\ast)$. For all nonnegative integers $k, l$ there exist constants $\gamma_{k,l} := \gamma_{k,l}(n, \gamma_1, \gamma_2) < \infty$ such that the pointwise estimates,
\[
\Big|\pdv{t}^l \nabla^k h(p_0, t_0)\Big| \leq \gamma_{k,l}H^{2l + k + 1}(p_0, t_0),
\]
hold for every $(p_0, t_0) \in M \times (-\infty, 0]$. 
\end{lemma}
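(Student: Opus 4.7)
The plan is to reduce the lemma to the standard Ecker--Huisken interior regularity theorem for mean curvature flow by exploiting the scaling invariance of the hypotheses. Fix a spacetime point $(p_0, t_0)$ and set $\lambda := H(p_0, t_0)/(n-1)$. Parabolically rescale the ancient solution $\mathcal M$ by $\lambda$ so that the rescaled flow $\tilde F$ satisfies $\tilde H(p_0, t_0) = n-1$; under this rescaling $\tilde h = \lambda^{-1} h$, $\tilde\nabla^k \tilde h = \lambda^{-1-k}\nabla^k h$, and $\tilde\partial_t^l = \lambda^{-2l}\partial_t^l$, so that a bound $|\tilde\partial_t^l\tilde\nabla^k\tilde h| \leq C$ at $(p_0, t_0)$ is precisely the estimate we want, with $\gamma_{k,l} = C/(n-1)^{2l+k+1}$. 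The hypotheses of $(\ast)$ are preserved under rescaling with the same constants $\gamma_1, \gamma_2$, so it suffices to prove a uniform bound at $(p_0, t_0)$ under the normalization $\tilde H(p_0, t_0) = n-1$.

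After this normalization I would apply Lemma \ref{curvature_control} to obtain $\tfrac{1}{4}(n-1) \leq \tilde H \leq 4(n-1)$ on the parabolic neighborhood $P(p_0, t_0, \hat r, \hat r^2)$. Since $|\tilde h| \leq \tilde H$ by convexity, this gives a uniform $C^0$ bound on the rescaled second fundamental form on a fixed parabolic neighborhood of $(p_0, t_0)$, with constants depending only on $n$.

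The key step is then to invoke the Ecker--Huisken interior estimates (or equivalently Shi-type estimates for mean curvature flow): a $C^0$ bound on $\tilde h$ on $P(p_0, t_0, \hat r, \hat r^2)$ implies, for each $k \geq 0$, a $C^0$ bound on $\tilde\nabla^k \tilde h$ on the shrunken neighborhood $P(p_0, t_0, \hat r/2, \hat r^2/2)$, with constants depending only on $n$, $k$, and $\hat r$. In particular $|\tilde\nabla^k \tilde h(p_0, t_0)| \leq C(n, k, \gamma_1, \gamma_2)$. To handle time derivatives, use Simons' identity and the evolution equation $\partial_t h = \Delta h + |h|^2 h$ repeatedly to express $\partial_t^l \nabla^k h$ as a universal polynomial in $\nabla^j h$ with $j \leq k + 2l$; the spatial bounds from the previous sentence then yield the desired bound on $|\tilde\partial_t^l\tilde\nabla^k\tilde h(p_0, t_0)|$.

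Finally, rescaling back by $\lambda$ as indicated in the first paragraph converts this to $|\partial_t^l\nabla^k h(p_0, t_0)| \leq \gamma_{k,l}\, H(p_0, t_0)^{2l+k+1}$. The main (mild) obstacle is bookkeeping: one has to check that the interior estimates apply uniformly on a neighborhood whose size depends only on $n, \gamma_1, \gamma_2$ (which is exactly what Lemma \ref{curvature_control} provides), and that the polynomial conversion of time into space derivatives can be carried out inductively without losing the homogeneity; both are standard after \cite{HS09}.
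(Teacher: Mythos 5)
Your proposal is correct and follows essentially the same route as the paper: Lemma \ref{curvature_control} gives the curvature bound on a parabolic neighborhood of definite rescaled size, the Ecker--Huisken interior estimates then control $\nabla^k h$ at the point, and time derivatives are converted into spatial derivatives via the evolution equations (the paper uses the equation $\pdv{t}\nabla^m h = \Delta \nabla^m h + \sum \nabla^i h \ast \nabla^j h \ast \nabla^k h$ and induction, which is the rigorous form of your ``universal polynomial'' bookkeeping). Your explicit normalization $\tilde H(p_0,t_0)=n-1$ versus the paper's scale-invariant phrasing is only a cosmetic difference.
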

\begin{proof}
By standard interior estimates for the mean curvature flow (e.g. \cite{EH91}), if $|h(p, t)| \leq r^{-1}H(p_0, t_0)$ in the parabolic neighborhood $\hat P(p_0, t_0, r, r^2)$, then there exists a constant $C_k$ depending only upon the dimension $n$ and $k$, such that
\[
|\nabla^k h(p_0, t_0)| \leq C_k r^{-k-1} H(p_0, t_0)^{k+1}
\]
Fix a point $(p_0, t_0)$. By the previous lemma, we can find $r \in (0, \frac{1}{4})$, depending only $\gamma_1, \gamma_2$ and $n$, such that 
\[
|h(p,t)| \leq H(p,t) \leq 4 H(p_0, t_0) \leq r^{-1} H(p_0, t_0)
\]
for every $(p, t) \in \hat P(p_0, t_0, r, r^2)$, which implies
\[
|\nabla^k h(p_0, t_0)| \leq C_k r^{-k -1}H(p_0, t_0)^{k+1}. 
\]
This proves the pointwise estimates when $l = 0$. For $l > 0$, we recall that the evolution equation for $\nabla^m h$ is of the form
\[
\pdv{t} \nabla^m h = \Delta \nabla^m h  + \sum_{i + j + k = m } \nabla^i h \ast \nabla^j h \ast \nabla ^k h.
\]
The estimates for time derivatives of $\nabla^k h$ follow by induction, the evolution equation above, and a straightforward commutation identity for $\pdv{t}$ and $\Delta$.
\end{proof}

Each of our model solutions (the spheres, cylinders, bowls, and ovals) is $\frac{1}{n-1}$-two-convex. With the higher derivative estimates established, we can now show that $\mathcal M$ is $\frac{1}{n-1}$-two-convex. Several steps in our proof are inspired by the arguments given in \cite{BH19}. 

\begin{proposition}\label{optimal_two_convexity}
Suppose $\mathcal M$ satisfies $(\ast)$. Then $\mathcal M$ is $\frac{1}{n-1}$-two-convex. 
\end{proposition}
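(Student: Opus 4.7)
We argue by contradiction, combining a blow-up with a strong maximum principle. Define
\[
\beta^{\ast} := \inf_{(p,t) \in M \times (-\infty,0]} \frac{\lambda_1(p,t) + \lambda_2(p,t)}{H(p,t)}.
\]
By uniform two-convexity $\beta^{\ast} > 0$. Suppose for contradiction that $\beta^{\ast} < \tfrac{1}{n-1}$ and pick a sequence $(p_k, t_k)$ with $(\lambda_1 + \lambda_2)(p_k,t_k)/H(p_k,t_k) \to \beta^{\ast}$. Parabolically rescale
\[
F_k(p,s) := \frac{H(p_k,t_k)}{n-1}\Bigl[F\Bigl(p,\, t_k + \frac{(n-1)^2}{H(p_k,t_k)^2}\, s\Bigr) - F(p_k,t_k)\Bigr],
\]
so that $\tilde H_k(p_k, 0) = n-1$. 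The rescaled flows retain weak convexity, uniform two-convexity (same $\beta$), and the pointwise derivative estimates (same $\gamma_1,\gamma_2$). Lemma \ref{curvature_control} uniformly bounds the rescaled $\tilde H_k$ on each parabolic neighborhood $\hat P(p_k, 0, R, R^2)$, and Lemma \ref{derivative_estimates} then bounds all higher covariant and time derivatives of $h$. Arzelà--Ascoli with a diagonal extraction produces a smooth $C^{\infty}_{\mathrm{loc}}$ subsequential limit $\bar F : \bar M \times (-\infty, 0] \to \R^{n+1}$: a complete ancient MCF, weakly convex, uniformly two-convex, satisfying the derivative estimates, with $\bar H(\bar p_0, 0) = n-1$ and $(\bar\lambda_1 + \bar\lambda_2)/\bar H = \beta^{\ast}$ at $(\bar p_0, 0)$; the infimum is now attained in the interior of the limit flow.

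Next, apply a strong maximum principle to $\psi := \bar\lambda_1 + \bar\lambda_2 - \beta^{\ast}\bar H \geq 0$, which vanishes at $(\bar p_0, 0)$. Although $\psi$ can fail to be smooth at points where $\bar\lambda_2 = \bar\lambda_3$, the quantity $\bar\lambda_1 + \bar\lambda_2$ is a concave function of $h$; using this concavity together with the MCF evolution equation for $h$, one shows (along the lines of arguments in \cite{BH19}) that $\psi$ satisfies $(\partial_t - \Delta)\psi \geq |h|^2 \psi$ in the weak/barrier sense. The strong maximum principle then forces $\psi \equiv 0$, i.e.\ $\bar\lambda_1 + \bar\lambda_2 \equiv \beta^{\ast}\bar H$ identically on $\bar M \times (-\infty, 0]$.

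Finally, we derive a contradiction via rigidity. If $\bar\lambda_1 = 0$ at some point, Hamilton's strong maximum principle for the evolving tensor $h$ forces $\bar\lambda_1 \equiv 0$ and an isometric splitting $\bar M = \R \times M'$; then $\lambda_1^{M'} = \bar\lambda_2 = \beta^{\ast}H_{M'}$, and uniform two-convexity of $\bar M$ upgrades to uniform convexity of $M'$ (i.e.\ $\lambda_1^{M'} \geq \beta\, H_{M'}$). A complete uniformly convex ancient MCF in $\R^n$ must be a shrinking round sphere---the compact case follows from Huisken's convergence theorem \cite{Hui84} combined with the classification of uniformly convex compact ancient solutions (cf.\ \cite{HS15}), while noncompact examples are ruled out by the monotonicity of the pinching ratio $\lambda_1^{M'}/H_{M'}$. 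But for a sphere $\lambda_1^{M'} = H_{M'}/(n-1)$, contradicting $\lambda_1^{M'} = \beta^{\ast} H_{M'} < H_{M'}/(n-1)$. The complementary case $\bar\lambda_1 > 0$ strictly requires a more delicate, refined application of the strong maximum principle to the concave function $\lambda_1 + \lambda_2$ of $h$ to extract either a splitting or the same rigid classification. This strictly convex branch is the technical heart of the argument, and is where the appendix supplies the missing details. The principal obstacles are therefore (i) making the strong maximum principle step rigorous for the non-smooth $\psi$ at eigenvalue coincidences, and (ii) executing the rigidity analysis in the strictly convex case.
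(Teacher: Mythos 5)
Your first two steps (the rescaling at a minimizing sequence and the strong maximum principle forcing $\lambda_1+\lambda_2\equiv\beta^{\ast}H$ on the limit) are in the spirit of the paper, but there are two genuine gaps. First, the limit you construct cannot be a \emph{complete} ancient flow. Lemma \ref{curvature_control} only controls the mean curvature on a parabolic neighborhood of a \emph{fixed} rescaled size $\hat r$: the gradient estimate $|\nabla H|\leq c_1H^2$ integrates to $H(q)^{-1}\geq H(p)^{-1}-c_1\,d(p,q)$, which gives no upper bound on $H$ beyond rescaled distance of order $1/c_1$, so your claim that $\tilde H_k$ is bounded on $\hat P(p_k,0,R,R^2)$ for every $R$ does not follow from the hypotheses (and noncollapsing, which would help, is exactly what the proposition is meant to establish). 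The paper therefore only extracts a \emph{local} limit flow on a small neighborhood $\Omega\times[-T,0]$ and runs the entire rigidity argument locally. As a consequence, the global classification results you invoke in the split case (Huisken \cite{Hui84}, the compact classification of \cite{HS15}, and the vaguely stated ``monotonicity of the pinching ratio'' for the noncompact case --- the correct reference there would in any case be Hamilton's theorem \cite{Ham94} that complete uniformly pinched convex hypersurfaces are compact) are not available, since the limit is not complete.

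Second, and more seriously, the strictly convex case --- which you explicitly defer --- is the heart of the proof, and the appendix does not supply it; it only supplies the parallel-transport invariance via Bony's strict maximum principle. The paper's local mechanism is: once $\lambda_1+\lambda_2\equiv\beta H$, the set of orthonormal two-frames attaining $\lambda_1+\lambda_2$ is invariant under parallel transport, and the sum of eigenspaces $V_1+V_2$ is a parallel subbundle of rank $k$; de Rham then splits $(\Omega,g(\tau))$ locally as a product, and since a strictly two-convex hypersurface has strictly positive isotropic curvature intrinsically, only $k=n-1$ or $k=n$ can occur. The case $k=n-1$ is excluded by the Gauss equation ($\Ric(e_n,e_n)=\lambda_n(H-\lambda_n)>0$, whereas a parallel direction has vanishing Ricci curvature), and $k=n$ forces $\lambda_2=\cdots=\lambda_n$, whence $(1-\beta(n-1))\lambda_2\leq 0$ and $\beta\geq\frac{1}{n-1}$ directly --- no contradiction setup or global classification is needed. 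Without this (or a substitute for it), your argument does not close: in the strictly convex branch you have no way to reach a contradiction, and in the split branch you rely on completeness you have not established.
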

\begin{proof}
Recall that $0 < \lambda_1 \leq \lambda_2 \leq \cdots \leq \lambda_n$ denote the eigenvalues of the second fundamental form, $h$, with multiplicity. 
Let 
\[
\beta = \inf_{(p, t) \in M \times (-\infty, 0]} \frac{\lambda_1(p, t) + \lambda_2(p, t)}{H(p,t)}.
\]
We have assumed $\mathcal M$ is uniformly two-convex, so $\beta > 0$. Our goal is to show $\beta \geq \frac{1}{n-1}$. To that end, consider a sequence of points $(p_j, t_j) \in M \times (-\infty, 0]$ such that $(\lambda_1(p_j, t_j) + \lambda_2(p_j, t_j))/H(p_j, t_j) \to \beta$ as $j \to \infty$. Let $Q_j = (n-1)H(p_j, t_j)^{-1}$ and let $\hat r$ be the constant appearing in Lemma \ref{curvature_control}. Suppose our solution $\mathcal M$ is given by the family of embeddings $F : M \times (-\infty, 0] \to \R^{n+1}$. For each $j$, we first consider the restriction of $F$ to the parabolic neighborhood $\hat P(p_j, t_j, \hat r, \hat r^2)$. Next we perform a parabolic rescaling and spacetime translation to define solutions $F_j$ by 
\[
F_j(p, \tau) = \frac{1}{Q_j} \big[ F\big(p, Q_j^2 \tau + t_j) - F(p_j, t_j) \big]. 
\]
Let ${\mathcal M}^{(j)}$ denote the resulting flow. Let $g_j, h_j,$ and $H_j$ denote the metric, second fundamental form, and mean curvature of $\mathcal M^{(j)}$. The solution $F_j$ is defined in the parabolic neighborhood $P_j(\hat r) := P(p_j, 0, \hat r, \hat r^2) = B_{g_j(0)}(p_j, \hat r) \times [-\hat r^2, 0]$. By construction, $H_j(p_j, 0) = n-1$ and $F_j(p_j, 0) = 0 \in \R^{n+1}$. Lemmas \ref{curvature_control} and \ref{derivative_estimates} imply that
\[
\sup_{P_j(\hat r)} \,\Big|\pdv{t}^l \nabla^k h_j \Big|\leq C(k, l, n, \gamma_1, \gamma_2). 
\] 
This gives local uniform control in any $C^k$ norm over solutions $F_j$ in the parabolic neighborhoods $P_j(\hat r)$. Standard compactness results (e.g. \cite{Coo11}) then imply the solutions $F_j$ converge smoothly and uniformly on compact subsets of $P_j(\hat r)$ to a locally defined limit flow $\hat{\mathcal M}$ defined in a parabolic neighborhood $\Omega \times [-T, 0]$ for some $T > 0$. We may assume $\Omega$ is a smooth domain in $\R^n$ and that the points $p_j$ converge to $p \in \Omega$. Let us denote the limiting geometric quantities by $g$, $h$, and $H$. 

We now analyze the limit flow in $\Omega \times [-T, 0]$. Let $u(q,t) := (\lambda_1 + \lambda_2 - \beta H)(q, t)$. The limit is weakly convex. Moreover, $u \geq 0$ and our choice of points $(p_j, t_j)$ implies that $u(p, 0) = 0$. By the strong maximum principle, these conditions imply $u$ vanishes identically in $\Omega \times (-T, 0)$. To see this, consider the $(0, 2)$-tensor $U_{ik} := h_{ik} - \frac{\beta}{2}Hg_{ik}$. The sum of the two smallest eigenvalues of $U$ is $u$. We argue by contradiction: suppose for some $\tau \in (-T, 0)$, there exists a point $q_0$ such that $u(q_0, \tau) > 0$. Recalling the evolution equations for $H$ and $h^i_k$, we have 
\[
\pdv{t}U^i_k = \Delta U^i_k + |A|^2U^i_k.
\]
Since $u(q_0, \tau) > 0$ and $u(q, \tau) \geq 0$ for all $q \in\Omega$, we can find a smooth nonnegative function $f_0$ defined on $\Omega$ such that $f_0(q) \leq u(q, \tau)$ for all $q \in \Omega$, $f_0(q_0) \geq \frac{1}{2}u(q_0, \tau)$, and $f(q) = 0$ for all $q \in \d \Omega$. Now let $f$ be a solution to the heat equation $\pdv{t}f = \Delta f$ with initial condition $f(q, \tau) = f_0(q)$ and boundary condition $f(q, t) = 0$ for all $q \in \d \Omega$. Since $f_0(q_0) > 0$, the strict maximum principle for scalar equations implies $f > 0$ in $\Omega \times (\tau, 0]$. Moreover, the tensor $\tilde U_{ik} := U_{ik} - \frac{f}{2} g_{ik}$ satisfies the equation 
\begin{align*}
\pdv{t}\tilde U^i_k &= \Delta \tilde U^i_k + |A|^2 \tilde U^i_k + |A|^2 f \delta^i_k \\
& \geq \Delta \tilde U^i_k + |A|^2 \tilde U^i_k.  
\end{align*}
The sum of the first two eigenvalues of $\tilde U$ is $u - f$. At the initial time $\tau$, $\tilde U$ is weakly two-convex since by construction $u(\cdot, \tau) - f_0 \geq 0$. By the weak maximum principle for tensors, weak two-convexity of $\tilde U$ must hold in $\Omega \times [\tau,0]$. However, in this case we conclude $u(p, 0) \geq f(p, 0) > 0$, a contradiction. 

We have shown that $\lambda_1 + \lambda_2 \equiv \beta H$ in $\Omega \times (-T, 0)$. Fix a time $\tau \in (-T, 0)$ and let us only consider $g, h$ and $H$ at the time $\tau$. For each $q \in \Omega$, let us consider the set of orthonormal two-frames 
\[
E_{q}:= \{ \{e_1, e_2\} \subset T_q\Omega : |e_1| = |e_2| = 1, \; \langle e_1, e_2 \rangle = 0, \; h(e_1, e_1) + h(e_2, e_2) = \lambda_1(q) + \lambda_2(q)\}.
\]
The strict maximum principle for tensors implies that the set $E_q$ is invariant under parallel transport (with respect to $g(\tau)$). The proof follows directly from the results of Chapter 9 of \cite{Bre10}. For the convenience of the reader, we include the details of this argument in the Appendix. We next construct a parallel subbundle of $T\Omega$ out of the eigenspaces of $h$. To that end, for each $q \in \Omega$, consider the eigenspaces $V_{1,q} := \mathrm{ker}(h - \lambda_1 g)$ and $V_{2,q} := \mathrm{ker}(h - \lambda_2 g)$. Define a vector space $\tilde E_q := V_{1,q} + V_{2,q}$. Note it is possible $\lambda_1(q) = \lambda_2(q)$, in which case $\tilde E_q = V_{1,q}$. \\

\noindent \textbf{Claim:} $v \in \tilde E_q$ if and only if there exists $\{e_1, e_2\} \in E_q$ such that $v \in \mathrm{span}\{e_1, e_2\}$. 

\begin{proof}[Proof of claim.] For simplicity, since the claim does not depend upon $q$, let us suppress it in our notation. First we consider the case $\lambda_1 = \lambda_2$. In this case $\dim V_1 \geq 2$. The identity defining $E$ becomes $h(e_1, e_1) + h(e_2, e_2) = 2\lambda_1$, which implies $h(e_1, e_1) = h(e_2, e_2) = \lambda_1$. This shows $\mathrm{span}\{e_1, e_2\} \subset \tilde E$. Since $\dim V_1 \geq 2$, the converse is also clear. 

 Now assume $\lambda_2 > \lambda_1$. In this case, $h$ has a unique $\lambda_1$-eigenvector, which we denote by $e_1$. First suppose $v \in \tilde E$. Let $v_1 = \langle v, e_1 \rangle$ and write $v = \tilde v + v_1 e_1$, where $\tilde v \in V_2$. If $\tilde v =0$, then we can take $e_2$ to be any $\lambda_2$-eigenvector and we will have $v \in \mathrm{span}\{e_1, e_2\}$. If not, then take $e_2 = |\tilde v|^{-1} \tilde v$. Again, this gives $v \in \mathrm{span}\{e_1, e_2\}$ and in either case $\{e_1, e_2\} \in E$. 

Now for the other direction, suppose $\{v, w\} \in E$. Then $|v|^2 = |w|^2 = 1$, $\langle v, w \rangle = 0$, and $h(v,v) + h(w,w) = \lambda_1+ \lambda_2$. As above, let $v_1 = \langle v, e_1\rangle$ and $w_1 = \langle w, e_1\rangle$ and write $v = \tilde v + v_1 e_1$ and $w = \tilde w + w_1 e_1$. We must show $\tilde v, \tilde w \in V_2$. The case when either $\tilde v$ or $\tilde w$ is zero is straightforward, so we may assume they are not. Our assumptions on $v$ and $w$ imply $1 = |\tilde v|^2 + v_1^2 = |\tilde w|^2 + w_1^2$ and $v_1w_1 + \langle \tilde v, \tilde w\rangle = 0$. We compute 
\begin{align*}
h(\tilde v, \tilde v)  - \lambda_2|\tilde v|^2 + h(\tilde w, \tilde w) -\lambda_2 |\tilde w|^2 &= h(v,v) + h(w,w) - \lambda_1(v_1^2 + w_1^2)- \lambda_2(|\tilde v|^2 + |\tilde w|^2) \\
& = \lambda_1 + \lambda_2 -\lambda_1(v_1^2 + w_1^2)-  \lambda_2(|\tilde v|^2 + |\tilde w|^2)\\
& = \lambda_1(v_1^2 + |\tilde v|^2) + \lambda_2(w_1^2 +|\tilde w|^2) -\lambda_1(v_1^2 + w_1^2)-  \lambda_2(|\tilde v|^2 + |\tilde w|^2) \\
& = (\lambda_2 - \lambda_1)(w_1^2  - |\tilde v|^2). 
\end{align*}
Now $\lambda_2 = \inf\{ |x|^{-2} h(x,x) : \langle x, e_1 \rangle = 0, \, x \neq 0 \}$. Therefore $h(\tilde v, \tilde v)  \geq \lambda_2|\tilde v|^2$ and $h(\tilde w, \tilde w)  \geq \lambda_2|\tilde w|^2$. If either of these inequalities is strict, then $(\lambda_2 - \lambda_1)(w_1^2 - |\tilde v|^2) > 0$ and hence $w_1^2 > |\tilde v|^2$. Similarly $v_1^2 > |\tilde w|^2$. However, we also have $v_1w_1 + \langle \tilde v, \tilde w\rangle = 0$, which implies $v_1^2w_1^2 = \langle \tilde v, \tilde w\rangle^2 \leq |\tilde v|^2 |\tilde w|^2$. Thus, neither inequality can be strict and so $|\tilde v|^{-2}h(\tilde v, \tilde v) = |\tilde w|^{-2} h(\tilde w, \tilde w) = \lambda_2$. We conclude that both $\tilde v, \tilde w$ are in $V_2$ and hence $\mathrm{span}\{v, w \}\subset \tilde E$. 
\end{proof}

Returning to the proof of the proposition, an immediate consequence of the claim is that $\tilde E_q$ is invariant under parallel transport. In particular, the dimension of the vector spaces $\tilde E_q$ is constant and so $\tilde E := \bigcup_{q\in \Omega} \tilde E_q$ is a parallel vector subbundle of $T\Omega$. The classical theorem of de Rham implies $(\Omega, g(\tau))$ splits locally as an isometric product $\Omega_1^k \times \Omega_2^{n-k}$ of smaller dimensional spaces, where $k$ denotes the rank of the bundle $\tilde E$. Since the embedding of $(\Omega, g(\tau))$ is strictly two-convex, it has strict positive isotropic curvature as an intrinsic manifold. This implies (see \cite{Bre10}) the only possible splittings are $k = n-1$ or $k = n$ (no splitting). 

If $k = n -1$, then the eigenvector $e_n$, corresponding to the largest eigenvalue of $h$, is a parallel vector. Because the vector is parallel, $\Ric(e_n, e_n) = 0$. On the other hand, the Gauss equation implies that $\Ric(e_n, e_n) = \lambda_n(H - \lambda_n)$, which is not zero. Thus $k = n-1$ cannot occur. If $k = n$, then $\lambda_n = \lambda_2$. Since $\beta \leq 1$, we have
\[
\lambda_1 + \lambda_2 = \beta H = \beta \lambda_1 + \beta (n-1) \lambda_2  \leq \lambda_1+ \beta (n-1) \lambda_2.
\]
Therefore, $(1 - \beta (n-1) )\lambda_2 \leq 0$. Since $\lambda_2 > 0$, we conclude $\beta \geq \frac{1}{n-1}$, as was to be shown. 

\end{proof}

The final four results of this section concern the detection of necks. The first is a rephrasing of Theorem 7.14 in \cite{HS09} with a variation on its proof. 

\begin{theorem}[Theorem 7.14 in \cite{HS09}] 
Assume $n \geq 2$. Given constants $c_1, \eta_0 > 0$, we can find constants $\hat a$ and $\hat b$ with the following property. Let $F : M \to \R^{n+1}$ be a complete, connected, immersed hypersurface in $\R^{n+1}$ with $H > 0$. Suppose that $p \in M$. Moreover, suppose that $|\nabla H| \leq c_1H^2$ and $\lambda_1 \geq \eta_0 H$ for each point in the set $U := \{ x \in M : d_g(p, x) < \hat a H(p)^{-1}, H(x) > \hat b^{-1} H(p)\}$. Then $U = M$; in particular, M is compact.
\end{theorem}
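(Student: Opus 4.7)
My plan is to combine the gradient bound on $H$ with a Bonnet--Myers--Synge second variation argument to show that the open set $U$ exhausts $M$. The gradient estimate will render the curvature threshold $H > H(p)/\hat b$ redundant on $U$, thereby identifying $U$ with an intrinsic ball about $p$; uniform convexity will then force this ball to be all of $M$.

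First, I would integrate $|\nabla H| \leq c_1 H^2$ along a unit-speed geodesic $\gamma$ lying in $U$ to obtain $|H(\gamma(s))^{-1} - H(p)^{-1}| \leq c_1 s$, and hence $H(\gamma(s)) \geq H(p)/(1 + c_1 \hat a)$ for every $s \leq \hat a/H(p)$. Fixing $\hat b > 1 + c_1 \hat a$, a maximal-radius argument then shows $U = B_g(p, \hat a/H(p))$: setting $r_* := \sup\{r \leq \hat a/H(p) : B_g(p, r) \subset U\}$, if $r_* < \hat a/H(p)$ there would be a point $x$ with $d_g(p, x) = r_*$ and $x \notin U$, forcing $H(x) \leq H(p)/\hat b$; but applying the previous bound along a minimizing geodesic from $p$ to $x$ (whose interior lies in $U$ by the defining property of $r_*$) gives $H(x) \geq H(p)/(1 + c_1 \hat a) > H(p)/\hat b$, a contradiction. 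Thus $r_* = \hat a/H(p)$ and $U = B_g(p, \hat a/H(p))$.

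Next, supposing for contradiction that $U \neq M$, by connectedness and Hopf--Rinow there exists a minimizing geodesic $\gamma : [0, \hat a/H(p)] \to M$ from $p$ to some boundary point of $B_g(p, \hat a/H(p))$. On its interior, $\gamma$ lies in $U$, so $\lambda_1 \geq \eta_0 H \geq \eta_0 H(p)/(1 + c_1 \hat a) =: \kappa$ along $\gamma$. The Gauss equation gives sectional curvatures $\geq \kappa^2$ along $\gamma$, so the Synge second variation argument (using a test field of the form $\sin(\pi s/L)\, E(s)$ with $E$ parallel and normal to $\dot\gamma$) shows $\gamma$ cannot be minimizing once its length exceeds $\pi/\kappa$. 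This forces $\hat a/H(p) \leq \pi(1 + c_1 \hat a)/(\eta_0 H(p))$, i.e., $\hat a(\eta_0 - \pi c_1) \leq \pi$. Choosing $\hat a$ sufficiently large (and then $\hat b > 1 + c_1 \hat a$) violates this inequality; hence $M = U = B_g(p, \hat a/H(p))$, which is bounded and complete, and therefore compact.

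The main obstacle is the delicate balance of constants: the gradient estimate controls $H$ on $U$ only when $c_1 \hat a$ stays bounded, while the Synge argument demands $\hat a \eta_0$ be large enough to outpace $\pi(1 + c_1 \hat a)$; together these determine the admissible range of $\hat a$, after which $\hat b > 1 + c_1 \hat a$ is fixed. A secondary subtlety is that Synge's argument is being invoked when uniform convexity is known only on $U$, not on all of $M$, but this suffices because the second variation of arclength depends only on curvature values along $\gamma$ itself, which we have arranged to lie in $U$.
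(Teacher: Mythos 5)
Your Step 1 (the gradient estimate makes the curvature threshold redundant, so $U=B_g(p,\hat a H(p)^{-1})$ once $\hat b > 1+c_1\hat a$) is fine and matches what the paper does implicitly. The gap is in Step 2: the Bonnet--Myers/Synge step does not close for arbitrary given constants. Your worst-case curvature bound along the geodesic is $\lambda_1 \geq \kappa = \eta_0 H(p)/(1+c_1\hat a)$, so non-minimality requires $\hat a H(p)^{-1} > \pi/\kappa$, i.e. $(\eta_0 - \pi c_1)\hat a > \pi$. The theorem must supply $\hat a,\hat b$ for \emph{every} pair $c_1,\eta_0>0$ (in the application $\eta_0$ is a small two-convexity constant while $c_1 = n\gamma_1$ need not be small), and when $\eta_0 \leq \pi c_1$ your inequality has no admissible $\hat a$ at all: as $\hat a\to\infty$ the product $\hat a\,\kappa$ tends to $\eta_0/c_1$, which may never exceed $\pi$. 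Nor can this be repaired by using the position-dependent bound $K \geq \big(\eta_0 H(p)/(1+c_1 H(p)s)\big)^2$ in a sharper Jacobi/conjugate-point comparison: the comparison equation $u'' + \frac{(\eta_0/c_1)^2}{(s+(c_1H(p))^{-1})^2}u=0$ is an Euler equation which is non-oscillatory whenever $\eta_0/c_1 \leq \tfrac12$, so the hypotheses simply do not force a conjugate point intrinsically --- the $H\sim 1/(c_1 s)$ decay permitted by the gradient estimate is exactly borderline for Myers-type arguments, which integrate the \emph{square} of the curvature.

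The paper avoids this by an extrinsic argument that integrates the \emph{first} power of the curvature. Setting $\omega=-\nu(p)$ and following the integral curves of $-\nabla f/|\nabla f|$ for the height function $f=\langle F(\cdot)-F(p),\omega\rangle$, uniform convexity gives $\frac{d}{dr}\arcsin\langle\omega,\nu\rangle \leq -\eta_0 H$, and since $\arcsin\langle\omega,\nu\rangle$ is confined to an interval of length $\pi$, integrating against the lower bound $H\geq H(p)/(1+c_1(r_0-r)H(p))$ yields $\frac{\eta_0}{c_1}\log(1+c_1 r_0 H(p)) \leq 2\pi$. The logarithm diverges in $\hat a$, so $\hat a$ (and then $\hat b$) can be chosen for any $c_1,\eta_0>0$; a further topological argument with the critical points of $f$ (the components $U_s$ and $V_s$) then shows the sublevel sets exhaust $M$, giving $U=M$ and compactness. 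So your strategy needs to be replaced at its core, not just re-balanced: some monotone extrinsic quantity (the turning of $\nu$ against a fixed direction) is what converts the pointwise convexity into a length bound at this level of generality.
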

\begin{proof} Choose $\hat a$ so that $\frac{\eta_0}{c_1} \mathrm{log}(1 + \frac{c_1\hat a}{100} ) > 2\pi$. Let $\hat b := 2+ 100c_1\hat a$. Then $U$ is nonempty. 

Let $f(x) := \langle F(x) - F(p), \omega \rangle $ where $\omega := - \nu(p)$. Clearly, $f$ has a strict local minimum at $p$. For each $s > 0$, we denote by $U_s$ the connected component of $\{f < s\}$ that contains the point $p$. Let $s_{\ast}$ denote the supremum of all $s$ with the property that $U_s$ contains no critical points of $f$ other than $p$.

Clearly, $U_s \subset U$ if $s > 0$ is sufficiently small. Let $s_0$ denote the supremum of all $s \in (0, s_{\ast}]$ with the property that $U_s \subset U$. We claim that $U_s \subset \{x \in M : d_g(p, x) \leq \frac{1}{2} \hat a H(p)^{-1}, H(x) \geq 2 \hat b^{-1} H(p)\}$ for $s \in (0, s_0)$. In other words, the sets $U_s$ are contained in $U$ until the next height at which $f$ has a critical point. To see this, fix a real number $s \in (0, s_0)$ and an arbitrary point $x \in U_s$. Let $\gamma(r)$ denote the integral curve of the vector field $- \frac{\nabla f}{|\nabla f|} = - \frac{\omega^\top}{|\omega^\top|}$ starting at $x$. Clearly, $\gamma$ converges to $p$ since $p$ is the only critical point of $f$ in $U_s$. Note that $\gamma$ is parametrized by arc length. We assume that $\gamma(r)$ is defined for $r \in [0, r_0)$ and satisfies $\gamma(0) = x$ and $\gamma(r) \to p$ as $r \to r_0$. Since the path $\gamma$ is contained in $U$, we know that $|\nabla H| \leq c_1 H^2$ and $\lambda_1 \geq \eta_0 H$ at each point on $\gamma$. Integrating the gradient estimate gives
\[
H(\gamma(r)) \geq \frac{H(p)}{ 1 + c_1 (r_0 - r)H(p)}.
\]
Uniform convexity implies
\[
\frac{d}{dr} \langle \nu, \omega \rangle = - \frac{h(\omega^\top, \omega^\top)}{|\omega^\top|} \leq - \eta_0 H |\omega^\top| = - \eta_0 H \sqrt{1 - \langle \omega, \nu \rangle^2}.
\]
This gives
\[
\frac{d}{dr} \mathrm{arcsin}(\langle \omega, \nu \rangle ) \leq - \eta_0 H \leq - \eta_0 \frac{H(p)}{ 1 + c_1 (r_0 - r)H(p)}.
\]
Since $\mathrm{arcsin}(\langle \omega, \nu \rangle )$ takes values in the interval $[-\frac{\pi}{2}, \frac{\pi}{2}]$, it follows that 
\[
\int_0^{r_0} \eta_0 \frac{H(p)}{ 1 + c_1 (r_0 - r)H(p)} \, dr \leq 2\pi.
\]
This gives $\frac{\eta_0}{c_1} \mathrm{log}(1 + c_1r_0H(p)) \leq 2\pi$. In view of our choice of $\hat a$, we obtain $r_0 H(p) \leq \frac{1}{2}\hat a$. This implies $d_g(p, x) \leq r_0 \leq \frac{1}{2} \hat a H(p)^{-1}$ and, in view of our choice of $\hat b$, $H(x) \geq H(p)( 1 + c_1 r_0H(p))^{-1}\geq 2\hat b^{-1} H(p)$. This shows that $U_s \subset  \{x \in M : d_g(p, x) \leq \frac{1}{2} \hat a H(p)^{-1}, H(x) \geq 2 \hat b^{-1} H(p)\} \subset U$ for all $s \in (0, s_0)$. Consequently, $s_0 = s_\ast$, as originally claimed. 

For each $s \in (0 , s_\ast)$, $U_s$ is diffeomorphic to $B^n$. Moreover, in view of the maximality of $s_\ast$, there exists a sequence $s_j \nearrow s_\ast$ and a sequence of points $q_j \in \d U_{s_j}$ such that $|\nabla f(q_j)| \to 0$. Therefore, $q_j \to q_\ast$ where $q_\ast$ is a critical point of $f$. Clearly, $q_\ast \in U$ and $f(q_\ast) = s_\ast$. Since $q_\ast \in U$, the second fundamental form at $q_\ast$ is positive definite. This implies that the Hessian of $f$ at $q_\ast$ is either positive or negative definite. Therefore, $f$ has a strict local maximum or minimum at $q_\ast$. Since $q_j \to q_\ast$ and $f(q_j) \nearrow f(q_\ast)$, $f$ cannot have a strict local minimum at $q_\ast$. Consequently, $f$ must have a strict local maximum at $q_\ast$. For each $s \in (0, s_\ast)$ we denote by $V_s$ the connected component of $\{f > s\}$ which contains the point $q_\ast$. Since $q_j \to q_\ast$, we conclude that $q_j \in \d V_{s_j}$ for $j$ large. If $j$ is sufficiently large, then $V_{s_j}$ is diffeomorphic to $B^n$. 

To summarize, both $\d U_{s_j}$ and $\d V_{s_j}$ are connected components of $\{f = s_j\}$ (here we use that $n \geq 2)$ and both $\d U_{s_j}$ and $\d V_{s_j}$ contain the point $q_j$. Therefore $\d U_{s_j} = \d V_{s_j}$. Since $M$ is connected, we conclude $\bar U_{s_j} \cup \bar V_{s_j} = M$. Since $\mathrm{diam}_g(V_{s_j}) \to 0$, it follows that $\bigcup_j U_{s_j}$ is dense in $M$. Since $\bigcup_j U_{s_j}$ is contained in the set $\{x \in M : d_g(p, x) \leq \frac{1}{2} \hat a H(p)^{-1}, H(x) \geq 2 \hat b^{-1} H(p)\}$, we conclude $M$ itself is contained in the set $\{x \in M : d_g(p, x) \leq \frac{1}{2} \hat a H(p)^{-1}, H(x) \geq 2 \hat b^{-1} H(p)\}$. This completes the proof. 
\end{proof}

In our setting, we are concerned with a complete, connected, convex hypersurface that satisfies the pointwise gradient estimate $|\nabla H| \leq c_1 H^2$ everywhere (with constant $c_1 = n \gamma_1$). We will use the theorem above to find necks at a controlled distance from points which do lie on necks. Specifically, we will use the following immediate corollary. 

\begin{corollary}\label{nearby_neck}
Given constants $\eta, \gamma_1 > 0$, there exist positive constants $\hat a, \hat b$, depending only upon $n$, $\eta$, and $\gamma_1$, with the following property. Suppose $F : M \to \R^{n+1}$ is an embedding of a complete, connected, convex hypersurface satisfying the pointwise derivative estimate $|\nabla h| \leq \gamma_1 H^2$. Suppose $p_0 \in M$ is a point such that $\lambda_1(p_0) > \eta H(p_0)$. Then either, $M = B_g(p_0, \hat a H(p_0)^{-1})$ or there exists a point $p_1 \in B_g(p_0, \hat a H(p_0)^{-1})$ where $\lambda_1(p_1) \leq \eta H(p_1)$. Moreover, in either case every point $p \in B_g(p_0, \hat a H(p_0)^{-1})$ satisfies $H(p) \geq \hat b^{-1} H(p_0)$
\end{corollary}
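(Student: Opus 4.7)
The plan is to deduce the corollary as a direct application of the preceding theorem to the point $p = p_0$. The key preliminary is that the pointwise bound $|\nabla h| \leq \gamma_1 H^2$ implies a global gradient estimate $|\nabla H| \leq c_1 H^2$ on $M$, with $c_1 = c_1(n, \gamma_1)$ obtained from $|\nabla H|^2 \leq n|\nabla h|^2$. Thus the gradient hypothesis of the theorem is automatic, and only the pointwise pinching $\lambda_1 \geq \eta H$ on the relevant set needs to be verified, which I would handle by a contradiction argument. Concretely, I would let $\hat a_0, \hat b_0$ be the constants produced by the theorem for parameters $c_1$ and $\eta_0 = \eta$, and set $\hat a := \hat a_0$ and $\hat b := \max\{\hat b_0,\, 1 + c_1 \hat a_0\}$; both then depend only on $n$, $\eta$, and $\gamma_1$.

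For the ``moreover'' statement, I would argue directly from the gradient estimate. Given any $p \in B_g(p_0, \hat a H(p_0)^{-1})$, pick a minimizing geodesic $\gamma : [0, L] \to M$ from $p_0$ to $p$ with $L < \hat a H(p_0)^{-1}$. Starting from $H(p_0) > 0$ and using the identity $\big|\tfrac{d}{dr} H^{-1}(\gamma(r))\big| \leq c_1$ wherever $H > 0$, a continuity argument shows that $H$ stays positive along $\gamma$ and yields
\[
H(p) \geq \frac{H(p_0)}{1 + c_1 \hat a} \geq \hat b^{-1} H(p_0).
\]

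For the dichotomy, I would argue by contradiction. Suppose both alternatives fail: $M \neq B_g(p_0, \hat a H(p_0)^{-1})$, and $\lambda_1(q) > \eta H(q)$ for every $q$ in that ball. Consider the set
\[
U := \{x \in M : d_g(p_0, x) < \hat a_0 H(p_0)^{-1},\ H(x) > \hat b_0^{-1} H(p_0)\},
\]
which is contained in $B_g(p_0, \hat a H(p_0)^{-1})$ by the choice $\hat a = \hat a_0$ and $\hat b \geq \hat b_0$. On $U$ we have both $|\nabla H| \leq c_1 H^2$ (globally) and $\lambda_1 \geq \eta_0 H$ (from the assumed strict pinching on the ball), so the hypotheses of the theorem are met. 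The theorem then forces $U = M$, whence $M \subset B_g(p_0, \hat a H(p_0)^{-1})$ and so $M = B_g(p_0, \hat a H(p_0)^{-1})$, contradicting our standing assumption. The proof is a straightforward reduction; I expect the only technical subtlety to be ensuring that $H$ does not vanish along the geodesic in the ``moreover'' step, which the continuity argument handles.
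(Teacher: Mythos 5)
Your proposal is correct and follows essentially the same route as the paper: reduce to the preceding theorem with $c_1 = c_1(n,\gamma_1)$ and $\eta_0 = \eta$, use the globally valid gradient estimate to integrate $H^{-1}$ along minimizing geodesics for the lower mean curvature bound on the ball, and obtain the dichotomy from the theorem's conclusion $U = M$. The only (cosmetic) difference is that the paper takes $\hat a, \hat b$ directly from the theorem's proof, where $\hat b = 2 + 100 c_1 \hat a$ already dominates $1 + c_1\hat a$ so that $U$ is the whole ball, whereas you enlarge $\hat b$ to $\max\{\hat b_0, 1 + c_1\hat a_0\}$ and phrase the case split as a contradiction; both arrangements are fine.
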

\begin{proof}
With $c_1 := n\gamma_1$ and $\eta$ as given, let $\hat a$ and $\hat b$ be chosen as in the proof of the theorem above. Since the derivative estimate holds everywhere, the set $U = \{ x \in M : d_g(p_0, x) < \hat aH(p_0)^{-1}, H(x) > \hat b^{-1} H(p_0)\}$ is the entire ball $B_g(p_0, \hat a H(p_0)^{-1})$. If $\lambda_1 \geq \eta H$ holds everywhere in $U$, the theorem implies the first alternative holds. Otherwise, clearly the second alternative holds. 
\end{proof}

The next lemma is an auxiliary result for the proof of the Neck Detection Lemma below. Our goal is to show that whenever $\frac{\lambda_1}{H}(p_0, t_0)$ is sufficiently small then $p_0$ lies at the center of an $(\varepsilon_0, L)$-neck on time $t_0$. We begin by first showing smallness of $\frac{\lambda_1}{H}(p_0, t_0)$ implies the second fundamental form is close to the second fundamental form of a round cylinder in a small intrinsic ball around $p_0$ at time $t_0$. 

\begin{lemma}\label{neck_detection_one}
Suppose $\mathcal M$ satisfies $(\ast)$. Let $\rho := \frac{1}{4}\hat r$, where $\hat r$ is the positive constant appearing in Lemma \ref{curvature_control}. For every $\phi \in (0, \frac{1}{n})$, we can find $\eta := \eta(\phi, n, \gamma_1, \gamma_2) \in (0, \phi)$ with the property that if $(p_0, t_0)$ is a spacetime point that satisfies $\lambda_1(p_0, t_0) \leq \eta H(p_0, t_0)$, then for every $p \in B_{g(t_0)}\big(p_0, 2\rho \frac{n-1}{H(p_0, t_0)}\big)$ there holds: 
\begin{enumerate}
\item[(1)] $\lambda_1(p, t_0) \leq \phi  H(p_0, t_0)$ and $\lambda_n(p, t_0) - \lambda_2(p, t_0) \leq \phi H(p_0, t_0)$;
\item[(2)] $\sum_{k =1}^8 H(p_0, t_0)^{-k-1}|\nabla^k h(p, t_0)| \leq \phi $;
\item[(3)] $(1- \phi) H(p_0, t_0) \leq H(p, t_0) \leq (1 + \phi) H(p_0, t_0)$. 
\end{enumerate}
\end{lemma}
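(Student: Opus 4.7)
The plan is a standard blow-up and compactness argument. Suppose for contradiction that the lemma fails: for some $\phi \in (0, 1/n)$ there is a sequence of spacetime points $(p_j, t_j)$ with $\lambda_1(p_j, t_j)/H(p_j, t_j) \to 0$ and, for each $j$, a point $q_j \in B_{g(t_j)}(p_j, 2\rho (n-1) / H(p_j, t_j))$ at which at least one of (1), (2), (3) is violated with the given $\phi$.

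First I would parabolically rescale the flow at $(p_j, t_j)$ so that $H(p_j, 0) = n-1$ in the new flow $\mathcal M^{(j)}$, exactly as in the proof of Proposition \ref{optimal_two_convexity}. Lemmas \ref{curvature_control} and \ref{derivative_estimates} furnish uniform bounds on $|\partial_t^l \nabla^k h_j|$ in the parabolic neighborhoods $P(p_j, 0, \hat r, \hat r^2)$, so the standard compactness theorem cited in that proof produces a subsequential smooth limit flow $\hat{\mathcal M}$ defined on some $\Omega \times [-T, 0]$ containing a limit point $p_\infty$ of the $p_j$.

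The limit $\hat{\mathcal M}$ is weakly convex, has $\lambda_1(p_\infty, 0) = 0$ by the choice of sequence, and inherits the pinching $\lambda_1 + \lambda_2 \geq H/(n-1)$ from Proposition \ref{optimal_two_convexity}. Then I would apply the strong maximum principle for tensors to $h$ itself, arguing as in the proof of Proposition \ref{optimal_two_convexity} (with the tensor $U$ replaced by $h$, i.e.\ $\beta = 0$): the null distribution of $h$ is invariant under parallel transport and $\hat{\mathcal M}$ locally splits isometrically as $\R \times \Sigma$ on an earlier time subinterval, with $\Sigma$ an $(n-1)$-dimensional mean curvature flow. The pinching, restricted to $\Sigma$, reads $\mu_1 \geq (\mu_1 + \cdots + \mu_{n-1})/(n-1)$, which forces all principal curvatures of $\Sigma$ to coincide; since $n - 1 \geq 2$, Schur's lemma identifies $\Sigma$ with a round sphere. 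Hence $\hat{\mathcal M}$ is a shrinking round cylinder with $H \equiv n-1$, i.e.\ of radius $1$.

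Finally I would read off the contradiction: on this cylinder $\lambda_1 \equiv 0$, $\lambda_n - \lambda_2 \equiv 0$, every covariant derivative $\nabla^k h$ vanishes, and $H \equiv n-1$, so (1)--(3) hold trivially with $\phi = 0$. Because $2\rho < \hat r$, the balls $B_{g_j(0)}(p_j, 2\rho)$ lie inside the region of smooth convergence, so for $j$ large (1)--(3) hold on them for \emph{any} positive $\phi$, contradicting the failure of the lemma along the sequence. The main obstacle is the splitting step, but this is exactly the strong-maximum-principle plus de Rham argument already spelled out in the proof of Proposition \ref{optimal_two_convexity} and its appendix, so it can be invoked rather than redone.
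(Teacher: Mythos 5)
Your proposal is correct and is essentially the paper's own proof: argue by contradiction, parabolically rescale at the bad points, use Lemmas \ref{curvature_control} and \ref{derivative_estimates} for uniform derivative bounds and compactness, and then force the limit flow to be a piece of a shrinking round cylinder (strong maximum principle gives $\lambda_1 \equiv 0$ and a splitting, the $\frac{1}{n-1}$-two-convexity from Proposition \ref{optimal_two_convexity} plus Schur's lemma makes the cross-section round), contradicting the assumed failure of (1)--(3) on the balls of radius $2\rho$ inside the region of smooth convergence. One minor remark: since $\eta$ is asserted to depend only on $(\phi, n, \gamma_1, \gamma_2)$, the paper draws its counterexample sequence from a sequence of flows $\mathcal M^{(j)}$ satisfying $(\ast)$ with the same $\gamma_1, \gamma_2$, whereas your negation fixes the single flow $\mathcal M$ and a priori yields an $\eta$ depending on $\mathcal M$; this is immaterial to the argument, which runs verbatim for a sequence of flows because all the uniform estimates used depend only on $(n, \gamma_1, \gamma_2)$.
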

\begin{proof}
We argue by contradiction. Suppose the assertion is not true. Then for some $\phi \in (0, \frac{1}{n})$, there exists a sequence of flows $\mathcal M^{(j)}$ satisfying $(\ast)$ for some fixed values of $\gamma_1$ and $\gamma_2$ and a sequence of positive constants $\eta_j \to 0$ which are counterexamples to the assertion. Assume the flow $\mathcal M^{(j)}$ is given by an embedding $F_j : M_j \times (-\infty, 0] \to \R^{n+1}$ and let $g_j, h_j$, $H_j$, and $\lambda_{i,j}$ denote the metric, second fundamental form, mean curvature, and principal curvatures ($i = 1, \dots n$) on $\mathcal M^{(j)}$. Then for each $j$ there exist spacetime points $(p_j, t_j) \in M_j \times (-\infty, 0]$ such that $\lambda_{1,j}(p_j, t_j) \leq \eta_j H_j(p_j, t_j)$, but 
\[
\sup_{q \in B_j} \max \left\{ \frac{\lambda_{1,j}(q, t_0)}{H_j(p_j, t_j)}, \quad \frac{\lambda_{n, j}(q, t_j) - \lambda_{2,j}(q, t_j)}{H_j(p_j, t_j)}, \quad \sum_{k=1}^8 \frac{|\nabla^k h_j(q, t_j)|}{H_j(p_j, t_j)^{k+1}}, \quad \Big|\frac{H_j(q,t_j)}{H_j(p_j, t_j)} -1\Big|\right \} > \phi,
\]
where $B_j := B_{g_j(t_j)}\big(p_j, 2\rho \frac{n-1}{H_j(p_j, t_j)}\big)$.   

Let $Q_j = (n-1) H_j(p_j, t_j)^{-1}$. The desired inequalities are scale and translation invariant, so as in the proof of Lemma \ref{optimal_two_convexity}, for each $j$ we define a translated and rescaled solution $\tilde F_j$ by 
\[
\tilde F_j(p, \tau) = \frac{1}{Q_j} \big[ F_j\big(p, Q_j^2 \tau + t_j) - F_j(p_j, t_j) \big]. 
\]
Let $\tilde {\mathcal M}^{(j)}$ denote the resulting flow. Let $\tilde g_j, \tilde h_j$, $\tilde H_j$, and $\tilde \lambda_{i,j}$ denote the geometric quantities of $\tilde {\mathcal M}^{(j)}$. By construction, each flow $\tilde{\mathcal M}^{(j)}$ satisfies $\tilde H_j(p_j, 0) = n-1$ and $\tilde \lambda_{1,j}(p_j, 0)\leq (n-1) \eta_j$. For $r > 0$, let $\tilde P_j(r)$ denote the parabolic neighborhood $P(p_j, 0,  r, r^2)$  in $\tilde{\mathcal M}^{(j)}$. Lemmas \ref{curvature_control} and \ref{derivative_estimates} implies uniform estimates
\[
\sup_{\tilde P_j(4\rho)} \,\Big|\pdv{t}^l \nabla^k \tilde h_j\Big|\leq C(k, l, n, \gamma_1, \gamma_2). 
\]
As before, a subsequence of the solutions $\tilde F_j$ converge locally in the $C^{20}$-topology within the parabolic neighborhoods $\tilde P_j(4\rho)$ to a smooth limit flow $\hat {\mathcal M}$ defined in a small intrinsic parabolic neighborhood $P_{\infty}(4\rho) := P(p_{\infty}, 0, 4\rho, 16\rho^2)$. Let $g, h, H$ and $\lambda_i$ denote the geometric quantities on $\hat{\mathcal M}$. 

Now we analyze the limit flow. By assumption and Proposition \ref{optimal_two_convexity}, $\tilde \lambda_{1,j} \geq 0$, $\tilde \lambda_{1,j} + \tilde \lambda_{2,j} \geq \frac{1}{n-1}\tilde H_j$, and $\tilde \lambda_{1,j}(p_j, 0) \leq (n-1)\eta_j$, where $\eta_j \to 0$ as $j \to \infty$. Hence in the limit, we have $\lambda_1 \geq 0$, $\lambda_1 + \lambda_2 \geq \frac{1}{n-1} H$ and $\lambda_1(p_{\infty}, 0) = 0$. As in the proof of Proposition \ref{optimal_two_convexity}, the maximum principle implies $\lambda_1 \equiv 0$ in $P_{\infty}(4\rho)$. Consequently, the limit flow must split a line and in view of the estimate $\lambda_1 + \lambda_2 \geq \frac{1}{n-1}H$, this gives $\lambda_2 = \cdots = \lambda_n = \frac{1}{n-1} H$. The Schur lemma implies the cross-section is a piece of an $(n-1)$-sphere and hence that $P_{\infty}(4\rho)$ is a small parabolic neighborhood in an evolving family of shrinking cylinders. In particular, $\lambda_1 \equiv 0$, $\lambda_n \equiv \lambda_2 $, $\sum_{k =1}^8|\nabla^k h| \equiv 0$, and $H \equiv n-1$ in $P_{\infty}(4\rho)$. By convergence of the $\tilde F_j$ in the $C^{20}$-topology, this implies 
\[
\sup_{q \in B_j} \max \left\{ \frac{\lambda_{1,j}(q, t_0)}{H_j(p_j, t_j)}, \quad \frac{\lambda_{n, j}(q, t_j) - \lambda_{2,j}(q, t_j)}{H_j(p_j, t_j)}, \quad \sum_{k=1}^8 \frac{|\nabla^k h_j(q, t_j)|}{H_j(p_j, t_j)^{k+1}}, \quad \Big|\frac{H_j(q,t_j)}{H_j(p_j, t_j)} -1\Big|\right \} \to 0
\]
along a subsequence as $j \to \infty$, in contradiction with our previous assumption for $j$ sufficiently large.
\end{proof}

By iterating the lemma above finitely many times, we can prove our version of the Neck Detection Lemma (cf. Lemma 7.4 in \cite{HS09}, Lemma 4.2 in \cite{Ngu18}, and Theorems 2.14 and 2.15 in \cite{BH16}). 

\begin{lemma}[Neck Detection]\label{neck_detection}
Suppose $\mathcal M$ satisfies $(\ast)$. Given $\varepsilon_0 \in (0, \frac{1}{n})$ and $L \geq 100$, there exists $\eta_0:= \eta_0(\varepsilon_0, L, n, \gamma_1, \gamma_2) \in (0, \varepsilon_0)$ with the property that if $(p_0,t_0)$ is a spacetime point with $\lambda_1(p_0,t_0) \leq  \eta_0 H(p_0,t_0)$, then $p_0 \in M$ lies at the center of an $(\varepsilon_0, L)$-neck at time $t_0$. 
\end{lemma}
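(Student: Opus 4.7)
The plan is to deduce Lemma \ref{neck_detection} from Proposition \ref{curvature_necks} by iterating Lemma \ref{neck_detection_one} finitely many times along a chain of overlapping small balls, so as to promote the curvature-cylindricality estimates from a small neighborhood of $p_0$ to the entire intrinsic ball of radius $(L+10)(n-1)/H(p_0,t_0)$ where the hypotheses of Proposition \ref{curvature_necks} must be verified.

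First, given $\varepsilon_0$ and $L$, fix $\varepsilon_1 = \varepsilon_1(n, L, \varepsilon_0) \in (0, \varepsilon_0)$ as in Proposition \ref{curvature_necks}, and choose an integer $N$ with $N\rho \geq L+10$. I would set up the relevant smallness constants by \emph{backward} recursion. Choose $\phi_N \in (0, 1/n)$ sufficiently small that the conclusions of Lemma \ref{neck_detection_one} with parameter $\phi_N$, together with the comparison $(1-\phi_N) H(p_0,t_0) \leq H \leq (1+\phi_N) H(p_0,t_0)$, imply the two conditions of Proposition \ref{curvature_necks} with the constant $\varepsilon_1$. Then, for $k = N-1, \ldots, 0$, choose $\phi_k$ small enough that $\phi_k / (1 - \phi_k) \leq \eta(\phi_{k+1})$, where $\eta(\cdot)$ is the function provided by Lemma \ref{neck_detection_one}. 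Finally, set $\eta_0 := \eta(\phi_0)$.

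Now suppose $\lambda_1(p_0, t_0) \leq \eta_0 H(p_0, t_0)$. Lemma \ref{neck_detection_one} immediately yields the $\phi_0$-smallness of all relevant quantities relative to $H(p_0,t_0)$ in $B_{g(t_0)}\bigl(p_0, 2\rho(n-1)/H(p_0, t_0)\bigr)$. Given any geodesic emanating from $p_0$, pick a point $p_1$ at distance $\rho(n-1)/H(p_0,t_0)$ along it; the $H$-bound gives $H(p_1,t_0) \in [(1-\phi_0)H(p_0,t_0), (1+\phi_0)H(p_0,t_0)]$, and therefore $\lambda_1(p_1, t_0)/H(p_1, t_0) \leq \phi_0/(1-\phi_0) \leq \eta(\phi_1)$. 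By construction, Lemma \ref{neck_detection_one} may be applied again at $p_1$ with parameter $\phi_1$, extending control to $B_{g(t_0)}\bigl(p_1, 2\rho(n-1)/H(p_1,t_0)\bigr)$. Iterating $N$ times along the geodesic reaches any point in $B_{g(t_0)}\bigl(p_0, (L+10)(n-1)/H(p_0,t_0)\bigr)$ with the final smallness $\phi_N$ (relative to the center of the last ball), which by the choice of $\phi_N$ converts to the hypotheses required in Proposition \ref{curvature_necks}. A single application of that proposition then exhibits $p_0$ as the center of an $(\varepsilon_0, L)$-neck at time $t_0$.

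The main technical obstacle is the bookkeeping of smallness constants across the chain. Each application of Lemma \ref{neck_detection_one} gives bounds in terms of the mean curvature at its own center, and comparing these bounds at an arbitrary point of the final large ball requires controlling multiplicative factors of the form $(1 \pm \phi_k)$, in some instances raised to powers up to nine (coming from the derivative sum in Proposition \ref{curvature_necks}). The backward recursive choice of the $\phi_k$ is what makes this work, at the cost of making the dependence of $\eta_0$ on the initial parameters rather implicit --- it is, essentially, an $N$-fold composition of the function $\eta$ from Lemma \ref{neck_detection_one}, with $N$ proportional to $L/\rho$.
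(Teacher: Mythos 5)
Your proposal is correct and is essentially the paper's own argument: the paper likewise reduces the statement to Proposition \ref{curvature_necks} and propagates the conclusions of Lemma \ref{neck_detection_one} over roughly $(L+10)/\rho$ steps via a backward-recursive choice of smallness constants (its $\eta_N,\dots,\eta_0$ playing the role of your $\phi_N,\dots,\phi_0$), with the extra constraint $\eta_m \leq \frac{1}{N-m}$ there doing precisely the $(1\pm\phi_k)$ bookkeeping for the shrinking curvature scale that you flag as the main technical point. The only cosmetic difference is that the paper obtains the pinching $\lambda_n(p_0)-\lambda_2(p_0) \leq \varepsilon_1 H(p_0)$ from the $\frac{1}{n-1}$-two-convexity of Proposition \ref{optimal_two_convexity}, whereas you read it off conclusion (1) of Lemma \ref{neck_detection_one}; both are valid.
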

\begin{proof}
Let $\varepsilon_0$ and $L$ be given. After a rescaling, we can assume without loss of generality that $H(p_0, t_0) = n-1$. Choose $\varepsilon_1 \in (0, \varepsilon_0)$ so that the conclusion of Proposition \ref{curvature_necks} holds. Recall the choice of $\varepsilon_1$ depends only upon $n$, $\varepsilon_0$, and $L$. By Proposition \ref{curvature_necks}, to show that $p_0$ lies at the center of a $(\varepsilon_0, L)$-neck at time times $t_0$, it suffices to show we can choose $\eta_0$ so that the following two properties hold:
\begin{enumerate}
\item[(a)] $\lambda_1(p_0, t_0) \leq \varepsilon_1  H(p_0, t_0)$ and $\lambda_n(p_0, t_0) - \lambda_2(p_0, t_0) \leq \varepsilon_1 H(p_0, t_0)$.
\item[(b)] For every $p \in B_{g(t_0)}(p_0, L+10)$, $\sum_{k =1}^8 H(p_0, t_0)^{-k-1}|\nabla^k h(p, t_0)| \leq \varepsilon_1$. 
\end{enumerate}

Recall from the previous lemma that $\rho := \frac{1}{4} \hat r$. Choose $N$ sufficiently large such that 
\[
1 + \sum_{m=2}^{N} \frac{m-1}{m} > \frac{L + 10}{\rho}. 
\]
We will now determine $\eta_0$. Begin by choosing $\eta_N \in (0, \varepsilon_1)$ such that the conclusions of Lemma \ref{neck_detection_one} hold with $\phi = \varepsilon_1$. For $m = 1, \dots, N-1$, having chosen $\eta_{m+1}$, first choose $\tilde \eta_{m} \in (0, \eta_{m+1} )$ so that the conclusions of Lemma \ref{neck_detection_one} hold with $\phi = \eta_{m+1}$ and then let $\eta_m := \min\{\frac{\tilde \eta_m}{1 + \tilde \eta_m},  \frac{1}{N -m}\}$. Finally, by one further application of the Lemma \ref{neck_detection_one}, choose $\tilde \eta_0 \in (0, \eta_1)$ so that Lemma \ref{neck_detection_one} holds with $\phi = \eta_1$ and define $\eta_0:= \min\{\tilde \eta_0,  \frac{\varepsilon_1}{n-1}\}$. It is clear that $\eta_0$ has been chosen in a way that depends only upon $\varepsilon_0, L, n, \gamma_1$ and $\gamma_2$. 

Now let us examine the consequences of our choices. Assume $\lambda_1(p_0, t_0) \leq \eta_0 H(p_0, t_0)$. First since $\eta_0 \leq \varepsilon_1$, we have $\lambda_1(p_0, t_0) \leq \varepsilon_1 H(p_0, t_0)$. Since $\mathcal M$ is $\frac{1}{n-1}$-two-convex, we have $\lambda_2(p_0, t_0) \geq (\frac{1}{n-1} - \eta_0) H(p_0, t_0)$ which implies 
\[
\lambda_n(p_0, t_0) - \lambda_2(p_0, t_0)  \leq H(p_0, t_0) -(n-1)\lambda_2(p_0, t_0) \leq  (n-1) \eta_0H(p_0, t_0) \leq \varepsilon_1 H(p_0, t_0). 
\]
This shows that (a) holds for our choice of $\eta_0$. As for (b): Define $\rho_1 = \rho$. For $m = 2, \dots, N$, define 
\[
\rho_m := \rho \Big(1 + \sum_{j = 1}^{m-1} \frac{N-j}{N+1-j}\Big) = \rho\Big(1 + \sum_{j = N + 2- m }^N \frac{j -1}{j}\Big)
\] 
Note that $\rho_{m+1} = \rho_m + \rho \frac{N-m}{N+1 -m} $ and $\rho_N > L + 10$ by our choice of $N$. Let us say condition $(\Gamma_m)$ holds if for all $p \in B_{g(t_0)}(p_0, \rho_m)$, we have
\begin{enumerate}
\item[(i)] $\lambda_1(p, t_0) \leq \eta_m H(p_0, t_0)$ and $\lambda_n(p, t_0) - \lambda_2(p, t_0) \leq \eta_m H(p_0, t_0)$;
\item[(ii)] $\sum_{k =1}^8 H(p_0, t_0)^{-k-1}|\nabla^k h(p, t_0)| \leq \eta_m$;
\item[(iii)] $(1- \eta_m)H(p_0, t_0) \leq H(p, t_0) \leq (1+ \eta_m) H(p_0, t_0)$. 
\end{enumerate}
Our choice of $\eta_0 \leq \tilde \eta_0$ and Lemma \ref{neck_detection_one} implies that $(\Gamma_1)$ holds. Suppose that $1 \leq m < N$ and $(\Gamma_m)$ holds. Now consider an arbitrary point $p \in B_{g(t_0)}(p_0, \rho_m)$. First, because $\eta_m \leq \frac{\tilde \eta_m}{1 + \tilde \eta_m}$, we have 
\[
\lambda_1(p, t_0) \leq  \frac{\tilde \eta_m}{1 + \tilde \eta_m} H(p_0, t_0) \leq  \frac{\tilde \eta_m}{1 + \tilde \eta_m} \frac{1}{1 - \eta_m} H(p, t_0) \leq \tilde \eta_m H(p, t_0). 
\]
Second, because $\eta_m \leq \frac{1}{N -m}$, we have 
\[
\frac{n-1}{H(p, t_0)} \geq \frac{1}{1 + \eta_m} \frac{n-1}{H(p_0, t_0)} \geq \frac{N -m}{N +1 -m}.
\]
We chose $\tilde \eta_m$ to satisfy the conclusions of Lemma \ref{neck_detection_one} with $\phi = \eta_{m+1}$. Given that $\lambda_1(p, t_0) \leq \tilde \eta_m H(p, t_0)$ and $\frac{n-1}{H(p, t_0)} \geq \frac{N-m}{N +1 - m}$, we conclude that conditions (i), (ii), and (iii) above hold in the ball $B_{g(t_0)}(p, 2\rho \frac{N-m}{N+1 -m})$ with $\eta_m$ replaced by $\eta_{m+1}$. Since $p \in B_{g(t_0)}(p_0, \rho_m)$ is arbitrary and $\rho_m + \rho \frac{N-m}{N+1 - m} = \rho_{m+1}$, this implies $(\Gamma_{m+1})$ holds. By finite induction $(\Gamma_N)$ holds and this implies (b) holds for $\eta_0$, completing the proof of the lemma. 
\end{proof}


\section{Neck and Cap Decomposition: Noncompact Case}

In this section, we will assume $M$ is noncompact. We begin this section by recalling some useful analysis on the convexity of necks from Proposition 7.18, Lemma 7.19, and the surrounding discussion in \cite{HS09}. The following lemma highlights a difference between our setting (a noncompact, convex hypersurface) and the setting of Huisken and Sinestrari (a closed, two-convex hypersurface). In the setting of Huiksen-Sinestari, the axes of different necks need not align if the (intrinsic) distance between the neck regions is large compared to the curvature scales of the necks. In our setting, we can show every neck must have approximately the same axis. 

\begin{lemma}\label{approx_axis}
Suppose $F : M \to \R^{n+1}$ is an embedding of a noncompact, complete, convex hypersurface. There exists a unit vector $\omega \in S^n$ with the following two properties: 
\begin{enumerate}
\item[(1)] $\langle \nu(q), \omega \rangle \geq 0$ for every $q \in M$. 
\item[(2)] There exists a constant $C := C(n)$ such that if $N \subset M$ is an $(\varepsilon, L)$-neck, then $\langle \nu(q), \omega \rangle \leq C\varepsilon$ for every $q \in N$. 
\end{enumerate}
\end{lemma}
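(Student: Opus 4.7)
The plan is to locate $\omega$ using the recession cone of the convex region bounded by $M$. Since $F$ is an embedding of a complete, noncompact, convex hypersurface, $F(M)$ is the boundary of a closed, noncompact convex set $K\subset\R^{n+1}$. A standard compactness argument in convex geometry---take $x_j\in K$ with $|x_j|\to\infty$, form $v_j := (x_j-p_0)/|x_j-p_0|$ for some fixed interior point $p_0$, and extract a subsequential limit---produces a unit vector $v\in S^n$ in the recession cone of $K$, meaning $y+tv\in K$ for every $y\in K$ and every $t\geq 0$. I set $\omega := -v$. Property (1) then follows immediately from the supporting half-space inequality at $F(q)$: applying $\langle x-F(q),\nu(q)\rangle\leq 0$ with $x=F(q)+tv\in K$ and dividing by $t>0$ gives $\langle v,\nu(q)\rangle\leq 0$, i.e.\ $\langle \omega,\nu(q)\rangle\geq 0$.

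For property (2), let $N\subset M$ be an $(\varepsilon,L)$-neck centered at $p$, and let $e_N\in S^n$ denote (a choice of) its axis. After the rescaling $(n-1)H(p)^{-1}=1$ of Definition \ref{necks}, $F|_N$ is realized as the $C^{10}$-graph $x\mapsto x+u(x)\nu_\Sigma(x)$ over the unit cylinder $\Sigma=S^{n-1}\times[-L,L]$ with axis $e_N$, with $\|u\|_{C^{10}(\Sigma)}\leq\varepsilon$. The standard formula for the unit normal of a small graph, combined with $|du|\leq\varepsilon$ and the fact that $\nu_\Sigma$ is everywhere orthogonal to $e_N$, immediately yields
\[
|\langle \nu(q), e_N\rangle| \leq C_1\varepsilon \qquad \text{for every } q\in N,
\]
with $C_1=C_1(n)$.

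Now decompose $\omega=\alpha e_N+\beta w$ with $w\in e_N^\perp$, $|w|=1$, and $\alpha^2+\beta^2=1$. On the central cross-section $S^{n-1}\times\{0\}\subset\Sigma$, the cylinder normals $\nu_\Sigma$ sweep out the entire unit sphere of $e_N^\perp$, so there exists a point $q_*\in N$ whose base point satisfies $\nu_\Sigma=-w$. Then $\nu(q_*)=-w+O(\varepsilon)$, and property (1) forces
\[
0 \leq \langle \nu(q_*),\omega\rangle = -\beta + O(\varepsilon),
\]
so that $\beta\leq C_2\varepsilon$ for some $C_2=C_2(n)$. Combined with the previous estimate, for every $q\in N$,
\[
\langle \nu(q),\omega\rangle = \alpha\langle\nu(q),e_N\rangle + \beta\langle\nu(q),w\rangle \leq C_1\varepsilon + C_2\varepsilon = C\varepsilon,
\]
which proves (2).

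The proof is essentially geometric: (1) pins $\omega$ to the dual of the recession cone of $K$, and (2) follows because every neck is nearly cylindrical and its axis must nearly align with $\omega$ in order to be compatible with (1) applied to a central cross-sectional sphere. I expect no serious obstacle. The only quantitative input is the $C^0$-comparison between the graph normal and the cylinder normal, which is a routine consequence of $\|u\|_{C^{10}(\Sigma)}\leq\varepsilon$; this is where the dimensional constant $C$ is extracted. A minor cosmetic point is that the recession cone need not be one-dimensional, so $\omega$ is not \emph{a priori} unique---but the argument itself shows that all valid choices of $\omega$ must agree up to $O(\varepsilon)$ whenever $M$ contains any $(\varepsilon, L)$-neck.
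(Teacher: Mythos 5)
Your proposal is correct and follows essentially the same route as the paper: $\omega$ is obtained as (minus) a recession direction of the convex region via a limit of normalized directions to points going to infinity, property (1) comes from the supporting half-space inequality, and property (2) comes from combining the $O(\varepsilon)$ bound on $\langle \nu, e_N\rangle$ along the neck with property (1) evaluated at a point of the central cross-section whose normal is nearly $-w$, forcing the axis-orthogonal component of $\omega$ to be $O(\varepsilon)$. The only cosmetic differences are your explicit recession-cone language and the $(\alpha,\beta)$-decomposition of $\omega$, where the paper works directly with $v=\omega-\langle\omega,\omega_0\rangle\omega_0$.
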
 
\begin{proof}
We begin by constructing $\omega$. Let $\Omega$ denote the convex interior of $F(M)$. Choose any sequence of points $x_k \in \Omega$ such that $|x_k| \to \infty$. We can find a subsequence of these points such that $-\frac{x_k}{|x_k|}$ converges to a limit $\omega \in S^n$. A consequence of convexity is that $\langle \nu(q), \omega \rangle \geq 0$ for all $q \in M$. To see this, consider any point $x \in \bar \Omega$ and fix some $s \geq 0$. For $k$ sufficiently large, $s_k := s|x_k|^{-1} \in [0, 1]$. By convexity, $(1 - s_k)x + s_k x_k \in \bar \Omega$ for $k$ sufficiently large. As $k \to \infty$, $(1 - s_k)x + s_k x_k$ converges to $x - s \omega$, and therefore $x - s\omega \in \bar \Omega$ for all $s \geq 0$. A convex hypersurface always lies above its tangent planes; so if $q \in M$ and $x \in \bar \Omega$, then $\langle x - F(q), -\nu(q) \rangle \geq 0$. Setting $x = F(q) - \omega$ gives $\langle \nu(q), \omega \rangle \geq 0$, as claimed. 

Now suppose $N \subset M$ is an $(\varepsilon, L)$-neck. By definition $N$ can be expressed as a small graph over an embedded round cylinder $\Sigma$ of length $2L$ in $\R^{n+1}$. Let $\omega_0$ be the unit vector parallel to the axis of $\Sigma$ such that $\langle \omega, \omega_0 \rangle \geq 0$. It follows from the definition of a neck that: 
\begin{enumerate}
\item[(i)] $|\langle \nu(q), \omega_0 \rangle |\leq C \varepsilon$ for every $q \in N$.
\item[(ii)] If $e$ is a unit vector orthogonal to $\omega_0$, then there exists a point $q \in N$ where $|\nu(q) - e| \leq C \varepsilon$. 
\end{enumerate}
If $\omega = \omega_0$, then we are done. Otherwise, consider $v = \omega - \langle \omega, \omega_0\rangle \omega_0 \neq 0$. By (ii), we can find a point $q \in N$ where $|\nu(q) + \frac{v}{|v|}| \leq C \varepsilon$. Then at $q$ we have 
\[
0 \leq \langle \nu(q), \omega \rangle = - \big\langle \frac{v}{|v|}, \omega \big\rangle + \big\langle \nu(q) + \frac{v}{|v|}, \omega \big\rangle \leq -\sqrt{1 - \langle \omega, \omega_0 \rangle^2} +  C\varepsilon.
\]
This gives $|\omega - \omega_0| \leq C\varepsilon$ and therefore, with $(i)$, we conclude $\langle \nu, \omega \rangle \leq C\varepsilon$ everywhere on $N$. 
\end{proof}

Now suppose $F : M \to \R^{n+1}$ is an embedding of a noncompact, complete, convex hypersurface that satisfies the gradient estimate $|\nabla h | \leq \gamma_1 H$. Suppose $p \in M$ lies at the center of an $(\varepsilon, L)$-neck $N$. By the lemma above, we can find $\omega \in S^n$ such that $\langle \nu, \omega \rangle \geq 0$ everywhere on $M$ and $\langle \nu, \omega \rangle \leq C\varepsilon$ everywhere on $N$. Let $y$ denote the height along the axis defined by $\omega$, normalized so that $p$ is contained in the hyperplane $y = 0$. Let $\Sigma_0$ denote the intersection of $N$ with the level set $y = 0$. The definition of a neck implies $\Sigma_0$ is compact and very close to a round $(n-1)$-dimensional sphere. We will call the $y$-direction vertical and all other directions, orthogonal to $\omega$, horizontal. 

By assumption $\omega$ is nearly tangent to $N$. As in \cite{HS09}, we consider integral curves of the height function $y$. For each $q \in \Sigma_0$, let $\gamma(\tau) := \gamma(\tau, q)$ be a solution to the ODE 
\[
\begin{cases} \dot \gamma = \frac{\omega^\top(\gamma)}{|\omega^\top(\gamma)|^2} & \tau \geq 0, \\ \gamma(0) = q, & \\ \end{cases}
\]
where $\omega^\top$ denotes the projection of $\omega$ to the tangent space of $M$. The curves are defined so that $\frac{d}{d\tau} y(\gamma(\tau)) = \langle \gamma'(\tau), \omega \rangle = 1$. So by our normalization $y(\Sigma_0)=0$, we have $y(\gamma(\tau)) = \tau$. Hence we can write $\gamma(y)$ in place of $\gamma(\tau)$. We will consider these curves for as long as they are well-defined, including after they leave the neck region $N$. As $\Sigma_0$ is compact, every curve is defined for $|y|$ small.  Let $y_{\min} \in [-\infty, 0)$ and $y_{\max} \in (0, \infty]$ be the minimal and maximal heights such that for every $q \in \Sigma_0$, the curve $\gamma(\cdot, q)$ is defined for $y \in (y_{\min}, y_{\max})$. It is possible for either of $y_{\min}$ or $y_{\max}$ to be infinite because our hypersurface is noncompact. However, we will see that the assumptions on $\omega$ will ensure $y_{\max} < \infty$ and $y_{\min} = -\infty$. For each $y \in (y_{\min}, y_{\max})$, let $\Sigma_y = \{\gamma(y, q) : q \in \Sigma_0\}$. Since our hypersurface is convex, the $\Sigma_y$ are just level sets of the height function. We will say the surfaces $\Sigma_y$ are shrinking if the projection of $\Sigma_{y_2}$ to a fixed hyperplane $y = y'$ is contained in the domain enclosed by the projection of $\Sigma_{y_1}$ to the hyperplane $y =y'$ for any $y_2 \geq y_1$. 

Now we give a lemma concerning the behavior integral curves to the height function and the surfaces they define. The lemma is a combination of Proposition 7.18 and Lemma 7.19 in \cite{HS09}. A slight difference is that our gradient estimate holds at all curvature scales and our hypersurface is noncompact. 

\begin{lemma}\label{curves}
Suppose $F : M \to \R^{n+1}$ is an embedding of a noncompact, complete, convex hypersurface satisfying the gradient estimate $|\nabla h | \leq \gamma_1 H$. Let $N \subset M$ be an $(\varepsilon, L)$-neck. Let $\omega$ satisfy the conclusions of Lemma \ref{approx_axis}. Under the hypotheses above, if $0 < \varepsilon < \varepsilon(n)$ is sufficiently small, then there holds:
\begin{enumerate}
\item[(1)] For every $q \in \Sigma_0$, the curve $\gamma(\cdot, q)$ is well-defined for as long as it is contained in the neck $N$. 
\item[(2)] Along a trajectory $\gamma$ (in the direction of $\omega$), we have 
\[
\frac{d}{dy} \langle \nu, \omega \rangle \geq \lambda_1 > 0.
\]
\item[(3)] Suppose $\langle \nu(q), \omega \rangle > 0$ for all $q \in \Sigma_{0}$. Then the surfaces $\Sigma_y$ are shrinking for all $y \in [0, y_{\max})$ and $y_{\max} < \infty$. Moreover, there exists a positive constant $\hat \theta := \hat \theta(n, \gamma_1)$ such that $H(\gamma(y,q)) \geq \hat \theta^{-1} H(p)$ for all $q \in \Sigma_0$ and $y\in [0 , y_{\max})$. 
\end{enumerate}
\end{lemma}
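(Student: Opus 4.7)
The three assertions are almost independent, so I would tackle them in order, with the bulk of the work going into (3).

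Parts (1) and (2) are essentially routine. For (1), Lemma \ref{approx_axis}(2) gives $|\langle\nu,\omega\rangle| \leq C\varepsilon$ inside the neck, hence $|\omega^\top|^2 = 1 - \langle\nu,\omega\rangle^2 \geq 1/2$ once $\varepsilon$ is small enough depending on $n$. The vector field $\omega^\top/|\omega^\top|^2$ is then smooth on $N$, and standard ODE theory yields a unique smooth trajectory $\gamma(\cdot, q)$ as long as it remains in $N$. For (2), I would just differentiate: writing $\omega = \omega^\top + \langle \omega, \nu\rangle \nu$, using that the Weingarten endomorphism $D\nu$ is tangential with $\langle D\nu(X), Y\rangle = h(X, Y)$, and inserting $\dot\gamma = \omega^\top/|\omega^\top|^2$, one gets
\[
\frac{d}{dy}\langle \nu(\gamma), \omega\rangle = \langle D\nu(\dot\gamma), \omega^\top\rangle = \frac{h(\omega^\top, \omega^\top)}{|\omega^\top|^2} \geq \lambda_1,
\]
since $\lambda_1$ is the smallest eigenvalue of $h$.

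For (3), set $\delta := \min_{\Sigma_0}\langle\nu,\omega\rangle > 0$. Combined with (2), $\langle\nu,\omega\rangle \geq \delta$ along every forward trajectory from $\Sigma_0$. The shrinking property then follows from the recession-direction argument already used in the proof of Lemma \ref{approx_axis}: since $\langle\nu,\omega\rangle \geq 0$ everywhere, $-\omega$ is a recession direction for the convex body $\bar\Omega$ bounded by $M$, so for $y_2 \geq y_1$ and $q \in \Sigma_{y_2}$ the translate $q - (y_2 - y_1)\omega$ lies in $\bar\Omega \cap \{y = y_1\}$, whose boundary in that hyperplane is $\Sigma_{y_1}$. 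Projecting horizontally gives the inclusion. To show $y_{\max} < \infty$, I would argue by contradiction: if $\Omega$ extended to arbitrarily large $y$, the recession cone of $\Omega$ would contain some $v$ with $\langle v, \omega\rangle > 0$, and every recession vector $u$ satisfies $\langle u, \nu(q)\rangle \leq 0$ at each boundary point; but inside the neck $\nu$ runs through every unit horizontal direction up to $O(\varepsilon)$ error, which forces the horizontal component of $v$ to vanish. Then $v = c\omega$ with $c > 0$, making $\omega$ itself a recession direction and hence $\langle\nu,\omega\rangle \leq 0$ everywhere, contradicting $\delta > 0$.

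The curvature lower bound $H(\gamma(y,q)) \geq \hat\theta^{-1} H(p)$ is the delicate part. Parametrizing the trajectory by arc length $s$, the gradient estimate $|\nabla h| \leq \gamma_1 H^2$ yields $|\frac{d}{ds}(1/H)| \leq C(n,\gamma_1)$, so $1/H$ grows at most linearly in $s$; it therefore suffices to bound the total arc length of the trajectory by a constant multiple of $1/H(p)$. Here I would use that $\Sigma_0$ is, by the neck definition, a nearly round $(n-1)$-sphere of radius comparable to $(n-1)/H(p)$, so by the shrinking property the whole region above $\Sigma_0$ projects horizontally into a disk of that radius, bounding the horizontal displacement of the trajectory. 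To control the vertical extent $y_{\max}$ at the same scale, I would invoke Corollary \ref{nearby_neck}: moving along $\gamma$ into the cap, as soon as $\lambda_1 \geq \eta H$ at some interior point the corollary locates either a closing cap of size $\hat a/H$ or a further neck, and in the noncompact convex setting only the former is compatible with a single ``top cap.'' Combining these two length bounds gives $s_{\max} \lesssim 1/H(p)$ and hence the claim. The main obstacle is precisely packaging this last step: coupling convexity, the recession-direction structure, and Corollary \ref{nearby_neck} to produce a clean quantitative bound on the trajectory's arc length in terms of $H(p)^{-1}$ alone.
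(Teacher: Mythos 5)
Your treatment of (1) and (2) is fine and matches the paper. For the first half of (3), your recession-direction argument (translating by $-( y_2-y_1)\omega$ and projecting) is a legitimate alternative to the paper's ``definite shrinking rate'' computation, and your recession-cone contradiction for $y_{\max}<\infty$ can be made to work, though as written it is too quick: unboundedness of $\Omega$ in the $y$-coordinate only produces an escape direction $v$ with $\langle v,\omega\rangle \geq 0$ (think of a paraboloid opening in a horizontal direction), and the neck only forces the horizontal part of $v$ to be $O(\varepsilon)$; you need the convex-cone structure (the cone contains $-\omega$, so a nonzero horizontal component of $v$ would yield an exactly horizontal recession vector, which the neck excludes) to conclude $v=\omega$ and reach the contradiction with $\langle\nu,\omega\rangle>0$ on $\Sigma_0$.

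The genuine gap is in the mean curvature lower bound. Your strategy is to bound the total arc length of each trajectory by $C(n,\gamma_1)H(p)^{-1}$ and then integrate $|\nabla H|\leq c_1H^2$ from $\Sigma_0$. That arc-length bound is false: the hypothesis gives only $\langle\nu,\omega\rangle>0$ on $\Sigma_0$, with no quantitative lower bound, so the cross-sections may shrink arbitrarily slowly and $y_{\max}$ is not controlled by $H(p)^{-1}$ with a constant depending only on $n,\gamma_1$. The bowl soliton is a concrete counterexample: a neck at radius $r_0$ has $H(p)\approx (n-1)/r_0$, while the height from that neck to the tip is $\approx r_0^2/(2(n-1))$, so the trajectory's length exceeds $H(p)^{-1}$ by a factor $\sim r_0$. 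Your appeal to Corollary \ref{nearby_neck} cannot repair this, since along the way $\lambda_1/H$ can be arbitrarily small (the trajectory passes through ever finer necks before reaching the cap), and at best this route yields a constant depending on $\delta=\min_{\Sigma_0}\langle\nu,\omega\rangle$, whereas the lemma asserts $\hat\theta=\hat\theta(n,\gamma_1)$. The missing idea is the paper's (and Huisken--Sinestrari's Lemma 7.19) local argument: since all cross-sections $\Sigma_y$, $y\geq 0$, project into a disk of radius $R=2(n-1)/H(p)$, within every slab of height $R$ one can place an exterior cone with axis $\omega$ and bases of radius at most $R$ touching the surface at some point $q'$, so $H(q')\geq (n-1)/R=\tfrac12 H(p)$ by comparison; the gradient estimate is then applied only over the intrinsic distance from the given point to $q'$, which is $O(R)$ because both the slab height and the cross-sectional diameters are $O(R)$. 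This decouples the bound from $y_{\max}$ and from $\delta$, which is exactly what your integration-from-$\Sigma_0$ approach cannot do.
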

\begin{proof}
Statement $(1)$ is clear from the inequality $0 \leq \langle \nu, \omega \rangle \leq C\varepsilon$ on $N$ and $(2)$ is a computation as in the proof of Proposition 7.18 in \cite{HS09}. Statement $(4)$ is essentially Lemma 7.19 in \cite{HS09}, but we verify it here. Because $\Sigma_{0}$ is compact, we may assume $\langle \nu(q), \omega \rangle \geq \varepsilon' > 0$ for all $q \in \Sigma_{0}$. By $(2)$, this implies $\langle \nu(\gamma(y,q)), \omega \rangle \geq \varepsilon'$ for all $q \in \Sigma_0$ and all $y \in [0, y_{\max})$. For $\bar y \in [0, y_{\max})$, consider the projection of the surface $\Sigma_{\bar y}$ to the hyperplane $y = 0$. The outward-pointing normal direction of the projected surface is $\nu - \langle \nu, \omega \rangle \omega$. We compute for any $q \in \Sigma_0$,
\[
\Big\langle \dot \gamma, \frac{\nu - \langle \nu, \omega \rangle \omega}{|\nu - \langle \nu, \omega \rangle \omega |} \Big\rangle =  \Big\langle \frac{\omega^\top}{|\omega^\top|^2},  \frac{\nu - \langle \nu, \omega \rangle \omega}{\sqrt{1 - \langle \nu, \omega \rangle^2}} \Big\rangle= - \frac{\langle \nu, \omega \rangle}{\sqrt{1 - \langle \nu, \omega \rangle^2}} \leq - \frac{\varepsilon'}{\sqrt{1 - (\varepsilon')^2}}. 
\]
This shows the horizontal component of $\dot \gamma(\bar y, q)$ points towards the interior of $\Sigma_{\bar y}$ and has norm at least $\varepsilon'/\sqrt{1 - (\varepsilon')^2}$. This means the surfaces $\Sigma_{\bar y}$ are shrinking at definite rate for $y \geq 0$ and hence $y_{\max} < \infty$. The proof of the second statement in $(3)$ follows directly from the proof of Lemma 7.19 in \cite{HS09}. Because $\omega$ is approximately the axis of our neck, $\Sigma_0$ is very close to a standard $(n-1)$-sphere of radius $\frac{n-1}{H(p)}$. Supposing $\varepsilon$ is sufficiently small, this implies $H(q) \geq \frac{1}{2} H(p)$ for all $q \in \Sigma_0$ and that there exists an $(n-1)$-sphere of radius $R = 2\frac{n-1}{H(p)}$ that encloses $\Sigma_{0}$ in the hyperplane $y = 0$. Recall, as in the proof of Lemma \ref{curvature_control}, for any points $q, q' \in M$, we have 
\[
H(q) \geq \frac{1}{H(q')^{-1} + n \gamma_1 d_g(q, q')}. 
\]
If $\bar y \in [0, y_{\max})$ satisfies $\bar y < R$, then for any $q \in \Sigma_{\bar y}$, it is clear there exists $q' \in \Sigma_{0}$ such that $d_g(q, q') \leq 2R$ (the extrinsic distance between points in $\Sigma_0$ and $q \in \Sigma_{\bar y}$ is bounded by $R$ vertically and $R$ horizontally; since our hypersurface is strictly convex, the intrinsic distance is similarly bounded). Since $H(q') \geq \frac{1}{2}H(p)$, we have 
\[
H(q) \geq \frac{1}{H(q')^{-1} + n \gamma_1 d_g(q,q')} \geq \frac{1}{2H(p)^{-1}+ n \gamma_1 2R} = \frac{H(p)}{2 + 4n(n-1) \gamma_1}. 
\]
On the other hand, if $\bar y \geq R$, then we can find $y'$ such that $\bar y \in [y', y' + R] \subset [0, y_{\max})$. We can construct a suitable portion of cone with spherical cross-section, axis $\omega$, and bases in the hyperplanes $y = y'$ and $y = y' + R$ of radius $R_1, R_2 \leq R$ respectively. For suitable choices of $R_1, R_2$, we can arrange that the cone touches a point $q' \in \cup_{y \in (y', y' + R)} \Sigma_{y}$ from the outside. This is possible by convexity of $\cup_{y \in (y', y' + R)} \Sigma_{y}$ and because the surfaces are shrinking. Now $H(q') \geq \frac{n-1}{R} = \frac{1}{2}H(p)$ by comparison to the cone. If $q \in \Sigma_{\bar y}$, noting that the intrinsic diameter of $\Sigma_{\bar y}$ is bounded by $\pi R$, then $d_g(q, q') \leq (2 + \pi) R$. Thus the above argument applies. We can take $\hat \theta := (2 + 2(2+ \pi)n(n-1) \gamma_1)^{-1}$ to complete the proof. 
\end{proof}

In the next step, we prove our ancient solution has a convex cap outside of which every point lies at the center of a neck. For the mean curvature flow of two-convex hypersurfaces, the following key result is often called the Neck Continuation Theorem. See Theorem 8.2 in \cite{HS09} and also Theorem 3.2 in \cite{BH16}. The proof of our version of the Neck Continuation Theorem is modeled on the proofs given by Huisken, Sinestrari, and Brendle. Of course our argument is also a bit simpler in that we do not need to consider if regions have been previously affected by surgery. Our phrasing of the Neck Continuation Theorem is inspired by similar statements used by Perelman in his study of $\kappa$-solutions in the Ricci flow.

Before the theorem, let us point out that if $N$ is an $(\varepsilon, L)$-neck in a noncompact, complete, connected, strictly convex hypersurface $M$, then $M \setminus N$ consists of two connected components, one bounded and the other unbounded. If both components were unbounded, then $M$ would split a line, thereby contradicting strict convexity. 


\begin{theorem}\label{neck_cap_noncompact}
Suppose $\mathcal M$ is noncompact and satisfies $(\ast)$. Given $0 <\varepsilon_0 < \varepsilon(n)$ small and $L \geq 100$, there exist constants $\varepsilon_1 \in (0, \varepsilon_0)$, and $C_0 < \infty$, depending only upon $\varepsilon_0$, $L$, $n$, $\gamma_1$ and $\gamma_2$, so that the following holds. Fix any time $t \in (-\infty, 0]$. Suppose that $p \in M$ is a point which lies at the center of an $(\varepsilon_1, L)$-neck $N$ at time $t$, and suppose further that $p$ does not lie at the center of an $(\frac{\varepsilon_1}{2}, 2L)$-neck at time $t$. Let $D$ denote the bounded connected component of $M \setminus N$, and let $\tilde{D}$ denote the unbounded connected component of $M \setminus N$. Then:
\begin{enumerate}
\item[(1)] Every point $q \in \tilde{D} \cup N$ lies at the center of an $(\varepsilon_0, L)$-neck at time $t$. 
\item[(2)] $D$ is diffeomorphic to $B^n$. 
\item[(3)] $\d D \subset \d N$ is a cross-sectional sphere of an $(\varepsilon_0, L)$-neck. 
\item[(4)] The (intrinsic) diameter of $D$ is bounded by $C_0 H(p,t)^{-1}$. 
\item[(5)] Every point $q \in D$ satisfies $C_0^{-1} H(p,t) \leq H(q,t) \leq C_0 H(p,t)$ and $\lambda_1(q,t) \geq C_0^{-1} H(q,t)$. 
\end{enumerate}
\end{theorem}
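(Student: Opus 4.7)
The plan is to identify $D$ as the convex cap on the $+\omega$ side of $N$, propagate the neck structure through $\tilde D \cup N$ by iterating Neck Detection, and deduce the cap estimates on $D$ by exploiting the maximality of $N$ at $p$. First I apply Lemma \ref{approx_axis} to obtain $\omega \in S^n$; since $p$ is the center of an $(\varepsilon_1, L)$-neck, $\omega$ is approximately the axis of $N$. Setting $y(q) = \langle F(q) - F(p), \omega\rangle$, Lemma \ref{curves} shows the level sets $\Sigma_y$ shrink in the $+\omega$ direction with $y_{\max} < \infty$, so the $+\omega$ side of $N$ is bounded; together with noncompactness this forces $D$ to lie on the $+\omega$ side and $\tilde D$ on the $-\omega$ side. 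I fix $\eta_0 = \eta_0(\varepsilon_0, L, n, \gamma_1, \gamma_2)$ from Lemma \ref{neck_detection} and choose $\varepsilon_1$ small enough that every point of any $(\varepsilon_1, L)$-neck satisfies $\lambda_1/H \leq \eta_0/2$; a finer choice of $\varepsilon_1$ will emerge below.

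For (1), I iterate Lemma \ref{neck_detection} in the $-\omega$ direction, starting from the $\tilde D$-boundary of $N$. At each step the base point $q_k$ has $\lambda_1/H \leq \eta_0$, so Neck Detection yields an $(\varepsilon_0, L)$-neck $N_k$ centered at $q_k$; Lemma \ref{approx_axis} guarantees the successive $N_k$ concatenate along the common axis $\omega$, and Lemma \ref{curvature_control} advances the intrinsic distance by a definite amount each step. To close the iteration (so interior points of each $N_k$ remain eligible for the next application of Neck Detection) I run the argument at a nested finer scale, with threshold $\eta_0' = \eta_0(\eta_0/(100 C), 2L)$ and requiring $\varepsilon_1 \leq \eta_0'/100$ at the outset. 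Noncompactness of $\tilde D$ then exhausts $\tilde D \cup N$. Conclusion (3) is immediate from (1) applied at points of $N$ adjacent to $\partial D$.

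For (2), (4), (5) I establish a uniform lower bound $\lambda_1/H \geq \eta^\ast$ on $D$ by exploiting the maximality hypothesis. If some $q \in D$ had $\lambda_1/H$ small enough to invoke Neck Detection at the strong scale $(\varepsilon_1/10, L)$, the resulting neck $N_q$ would share axis $\omega$ with $N$ by Lemma \ref{approx_axis}, and have comparable curvature scale by the gradient estimate, so the graphical descriptions of $N$ and $N_q$ over a common cylinder would concatenate to produce an $(\varepsilon_1/2, 2L)$-neck centered at $p$, contradicting the hypothesis. Given this lower bound, Corollary \ref{nearby_neck} applied at every $q \in D$ (together with (1)) places a point of $\partial D$ within intrinsic distance $\hat a/H(q)$ of $q$ with $H$ comparable at the two endpoints, which combined with Lemma \ref{curves}(3) yields the diameter bound (4) and the two-sided bound on $H$ in (5). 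For (2), $M$ is diffeomorphic to $\R^n$ as a complete, noncompact, embedded, strictly convex hypersurface, so the bounded region $D$, whose boundary is the sphere $\partial D$, is a topological ball.

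The principal obstacle is closing the Neck Detection iteration in (1): since $\eta_0(\varepsilon_0, L) < \varepsilon_0$, one cannot naively reuse Neck Detection at a single scale, and must track a nested sequence of thresholds so that the smallness hypothesis persists through every iterate. A secondary delicacy, in the argument for the cap bound, is that the concatenation of $N$ and $N_q$ must be matched quantitatively in both $C^{10}$-norm and in length to genuinely realize the $(\varepsilon_1/2, 2L)$-neck forbidden by the hypothesis.
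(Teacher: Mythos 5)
Your skeleton (fix $\omega$ from Lemma \ref{approx_axis}, use the height function and Lemma \ref{curves} to put the cap on the $+\omega$ side, use Corollary \ref{nearby_neck} for the diameter and curvature bounds) matches the paper, but the two load-bearing steps are not established, and the devices you propose for them do not work.

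First, for part (1) your plan is to iterate the Neck Detection Lemma in the $-\omega$ direction, and you correctly identify the obstacle: Neck Detection requires $\lambda_1/H \leq \eta_0(\varepsilon_0,L)$ at the base point, while on the resulting $(\varepsilon_0,L)$-neck one only gets $\lambda_1/H \lesssim \varepsilon_0 > \eta_0$, so the smallness hypothesis is not self-propagating. Your fix --- one nested finer threshold $\eta_0'$ with $\varepsilon_1 \leq \eta_0'/100$ --- buys exactly one extra layer and then faces the same problem: each additional neck-length consumed costs one more nested scale, and $\tilde D$ is unbounded, so no finite hierarchy of thresholds closes the induction. There is no local propagation mechanism here at all; the paper's argument is global. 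It uses Lemma \ref{approx_axis}(1), i.e. $\langle \nu, \omega\rangle \geq 0$ on all of $M$, equivalently $\langle\nu,\tilde\omega\rangle \leq 0$ with $\tilde\omega = -\omega$, together with the monotonicity $\frac{d}{dz}\langle\nu,\tilde\omega\rangle \geq \lambda_1 > 0$ along the integral curves. One chooses $\varepsilon_1$ so that $\langle\nu,\omega\rangle \leq \hat\eta_0$ and $\lambda_1 \leq \eta_0 H$ on $N$, and supposes neck structure first fails at height $\tilde z$ in the unbounded direction, at a point $\tilde q$ with $\lambda_1(\tilde q) = \eta_0 H(\tilde q)$. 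A Neck Detection choice at a still finer scale forces $\lambda_1 \geq \tfrac12\hat\eta_0 H(\tilde q)$ on the band $\tilde\Sigma(\tilde z - 2r_{\tilde q}, \tilde z)$ (otherwise a fine neck through that band would contain $\tilde q$ and force $\lambda_1(\tilde q)$ below $\eta_0 H(\tilde q)$), and integrating $\frac{d}{dz}\langle\nu,\tilde\omega\rangle \geq \lambda_1$ across this band of width $2r_{\tilde q}$ pushes $\langle\nu,\tilde\omega\rangle$ from $\geq -\hat\eta_0$ up to $\geq (n-2)\hat\eta_0 > 0$, contradicting $\langle\nu,\tilde\omega\rangle \leq 0$. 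In other words, the unbounded side cannot develop a uniformly convex band because that would tilt the normal past the range permitted by global convexity and noncompactness; this is the idea your proposal is missing.

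Second, your uniform convexity bound on $D$ is derived by claiming that a fine neck $N_q$ at any $q \in D$ would concatenate with $N$ into an $(\varepsilon_1/2, 2L)$-neck centered at $p$. This fails on two counts. A priori (before the diameter bound, which is a conclusion) $q$ may be many curvature scales from $p$, so $N_q$ need not meet $N$ and the ``comparable curvature scale by the gradient estimate'' claim has no basis; and even when $q$ is adjacent to $N$, gluing an $(\varepsilon_1/10, L)$-neck onto the given $(\varepsilon_1, L)$-neck can only produce a neck of quality $\varepsilon_1$ on the portion covered by $N$ alone, so one never obtains the forbidden $(\varepsilon_1/2, 2L)$-neck and no contradiction results. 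The paper's route avoids gluing graphs entirely: from the hypothesis that $p$ is not the center of an $(\varepsilon_1/2, 2L)$-neck, Neck Detection (applied twice, with an intermediate $(\hat\varepsilon, 3L)$-neck long enough that $B_g(q, 2Lr_q)\ni p$) yields the pointwise bound $\lambda_1 > \eta_1 H$ on $N$ itself; integrating $\frac{d}{dy}\langle\nu,\omega\rangle \geq \lambda_1$ across $N$ gives $\langle\nu,\omega\rangle \geq (n-1)\eta_1$ on $\Sigma_0$; and then a first-failure argument propagates $\lambda_1 > \eta_2 H$ through all of $D$, because at a first failure point a very fine neck would force $\langle\nu,\omega\rangle < \eta_1$ there (Lemma \ref{approx_axis}(2)), contradicting the monotone lower bound $\langle\nu,\omega\rangle > \eta_1$. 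Only after this threshold argument are Corollary \ref{nearby_neck} and Lemma \ref{curves}(3) invoked, as you do, to get (4) and (5). As it stands, both of your key mechanisms --- the nested-scale iteration for (1) and the concatenation contradiction for (5) --- have genuine gaps, and the normal-direction threshold $\langle\nu,\omega\rangle$, which is the paper's actual engine for both steps, does not appear in your argument.
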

\begin{proof}
Fix a time $t_0$ and for simplicity let us suppress $t_0$ in our notation. For any point $q \in M$, let $r_q := (n-1)H(q)^{-1}$ denote the mean curvature scale. As usual, let $B_g(q, r)$ denote an intrinsic ball of radius $r$ around $q$. Let $\varepsilon_0 > 0$, suitably small, and $L \geq 100$ be given. We can assume that on any $(\varepsilon_0, L)$-neck, the mean curvature satisifes $\frac{9}{10} \leq \frac{H(q_1)}{H(q_2)} \leq \frac{10}{9}$ for any pair of points $q_1, q_2$ on the neck. We will determine the constants $\varepsilon_1 \in (0, \varepsilon_0)$ and $C_0$ in two steps. By Lemma \ref{approx_axis}, we can fix unit vector $\omega \in S^n$ with the property that $\langle \nu, \omega \rangle \geq 0$ everywhere on $M$ and $0 \leq \langle \nu, \omega \rangle \leq C \varepsilon_1$ on any $(\varepsilon_1, L)$-neck $N \subset M$. \\

\textit{Step 1:} For any $0 < \varepsilon_1 < \varepsilon(n) $ sufficiently small, there exists $C_0:= C_0(\varepsilon_1, n, L, \gamma_1, \gamma_2) < \infty$ such that if $p$ and $N$ satisfy the assumptions of the theorem, then parts (2), (4), and (5) of the theorem hold. \\

We assume $\varepsilon_1 < \varepsilon(n)$ is sufficiently small so that the conclusions of Lemma \ref{curves} hold and so that $\frac{9}{10} \leq \frac{H(q_1)}{H(q_2)} \leq \frac{10}{9}$ for any two points on an $(\varepsilon_1, L)$-neck. Given such $\varepsilon_1$ and $p$ lying on $N$ as in the theorem, we can find $\eta_1 > 0$ such that $\lambda_1 > \eta_1 H$ everywhere on the neck $N$. This follows from the assumption that $p$ does not lie at the center of an $(\frac{\varepsilon_1}{2}, 2L)$-neck. The proof uses two applications of the Neck Detection Lemma and the definition of a neck. 
\begin{enumerate}
\item[$\bullet$] First, by the Neck Detection Lemma, we can find $\hat \eta:=\hat \eta(\varepsilon_1, n, L, \gamma_1, \gamma_2) \in (0, \frac{\varepsilon_1}{2})$ such that if $\lambda_1(q) \leq \hat \eta H(q)$, then $q$ lies at the center of an $(\frac{\varepsilon_1}{2}, 2L)$-neck.  
\item[$\bullet$] Next, using the definition of a neck, it is possible to choose $\hat \varepsilon:= \hat \varepsilon(\hat \eta, n) \in (0, \hat \eta)$ sufficiently small such if $q$ lies at the center of an $(\hat \varepsilon, 3L)$-neck $\hat N$, then $\lambda_1 \leq \hat \eta H$ everywhere on $\hat N$ and $B_g(q, 2L r_q) \subset \hat N$. This possible since an exact cylinder of length $6L$ contains an intrinsic ball of radius $2L$ around any point on its central sphere and on a cylinder $\lambda_1 \equiv 0$. 
\item[$\bullet$] Finally, by another application of the Neck Detection Lemma, choose $\eta_1:= \eta_1(\hat \varepsilon, n, L, \gamma_1, \gamma_2) \in (0, \hat \varepsilon)$ such that if $\lambda_1(q) \leq \eta_1 H(q)$, then $q$ lies at the center of an $(\hat \varepsilon, 3L)$-neck $\hat N$. 
\end{enumerate}
By our assumption on $\varepsilon_1$, for every $q \in N$, we have $d_g(p,q) < \frac{3}{2}Lr_p < 2Lr_q$, which implies $p \in B_g(q, 2Lr_q)$. Because $p$ does not lie at the center of an $(\frac{\varepsilon_1}{2}, 2L)$-neck, we have $\lambda_1(p) > \hat \eta H(p)$ and thus $\lambda_1(q) > \eta_1 H(q)$ for all $q \in N$, as claimed. 

We now show how these properties above imply parts (2), (4), and (5) of the theorem. Our primary tool, as in \cite{HS09}, is to analyze the integral curves of the height function. Define $y : M \to \R$ by $y(q) = \langle F(q)- F(p), \omega \rangle$. Our normalization ensures $y(p) = 0$. Let $\Sigma_0 \subset N$ denote the level set $y = 0$. By Lemma \ref{approx_axis}, $\Sigma_0$ is $O(\varepsilon_1)$-close to an spherical cross-section of $N$. For every $q \in \Sigma_0$, define $\gamma(y, q)$ to be the integral curve of $\frac{\omega^\top}{|\omega^\top|^2}$ passing through $q$. As in Lemma \ref{curves}, these curves are defined for $y \in (y_{\min}, y_{\max})$, with $y_{\min} < 0 < y_{\max}$, and we know they are well-defined at least as long as they are in $N$. For each $y$, let $\Sigma_y = \{ \gamma(y, p) : p \in \Sigma_0\}$ denote the smooth level sets of the height function. For $y_{\min} \leq y_1 < y_2 \leq y_{\max}$, let $\Sigma(y_1, y_2) = \bigcup_{y_1 < y < y_2 } \Sigma_y$. 

Because $\omega$ is an approximate axis of $N$ and the neck has intrinsic length approximately equal to $2Lr_p$ and $L \geq 100$, we must have $\Sigma(-2r_p, 0) \subset N$. Since $\frac{d}{dy} \langle \nu, \omega \rangle \geq \lambda_1$ and $\lambda_1 \geq \eta_1 H \geq \frac{1}{2}\eta_1 H(p)$ on $N$, for any $q \in \Sigma_0$ we have
\begin{align*}
\langle \nu(q), \omega \rangle &= \langle \nu(\gamma(-2r_p,q), 0), \omega \rangle + \int_{-2r_p}^0 \frac{d}{dy} \langle \nu, \omega \rangle \, dy \\
& \geq r_p\eta_1H(p) = (n-1)\eta_1.
\end{align*}
The strict positivity of $\langle \nu, \omega \rangle$ implies our neck must close up. Indeed, by part $(3)$ of Lemma \ref{curves}, $y_{\max} < \infty$ and $H(\gamma(y, q)) > \hat \theta^{-1} H(p)$ for all $q \in \Sigma_0$ and $y \in [0, y_{\max})$. Here $\hat \theta$ depends only on $n$ and $\gamma_1$. 

By definition of $y_{\max}$, there exists a point $q \in \Sigma_0$ such that $\nu(\gamma(y, q)) \to \omega$ as $y \to y_{\max}$. Since $y_{\max} < \infty$, we can follow the remainder of the argument given in $\cite{HS09}$. Since our hypersurface is strictly convex, the arguments in \cite{HS09} already show that our surface closes up in a convex cap. Our goal is to show the cap is uniformly convex, depending only upon the given constants. We will show there exists $\eta_2 := \eta_2(\eta_1, n, L, \gamma_1, \gamma_2) \in (0, \eta_1)$ such that the following four properties hold for all $y \in [0, y_{\max})$:
\[
 |\langle \nu, \omega \rangle | < 1, \quad \lambda_1 > \eta_2 H , \quad H > \hat \theta^{-1} H(p), \quad \langle \nu, \omega \rangle > \eta_1.
\] 
If $\eta_2 \leq \frac{1}{2} \eta_1$, then these four properties hold for $y$ sufficiently close to zero. If they do not hold until $y_{\max}$, let $\tilde y \in (0, y_{\max})$ be the first value of $y$ for which one of these properties fails. The first property must hold for $y < y_{\max}$ by definition. Also the fourth property holds until $y_{\max}$ because $\langle \nu, \omega \rangle > \eta_1$ on $\Sigma_{0}$, $\frac{d}{dy}\langle \nu, \omega \rangle \geq \lambda_1$, and by assumption $\lambda_1 > 0$ everywhere. In particular, $\langle \nu, \omega \rangle > \eta_1$ on $\Sigma_{\tilde y}$. We have already seen by Lemma \ref{curves} that the third property must hold. So if a property fails at $\tilde y$, it must be the second one. If it fails, then there exists $\tilde p \in \Sigma_{\tilde y}$ such that $\lambda_1(\tilde p) \leq \eta_2 H(\tilde p)$. By the Neck Detection Lemma and Lemma \ref{approx_axis}, if we take $\eta_2$ sufficiently small, then $\tilde p$ lies at the center of a very fine neck $\tilde N$ and satisfies $\langle \nu(\tilde p), \omega \rangle < \eta_1$. This contradicts the fourth inequality above. 

With the existence of $\eta_2$ established and the above properties verified, elementary arguments (see the end of the proof of Theorem 8.2 in \cite{HS09}) imply all integral curves converge to the same critical point of the height function. Thus the region $M \cap \{y > 0\}$ is a uniformly convex cap diffeomorphic to $B^n$. Since $\omega$ is $C\varepsilon_1$-close to the axis of the cylinder over which $N$ is a graph, this implies $D$ is diffeomorphic to $B^n$, which is (2). So far, we have shown that for all $q \in D$, $\lambda_1(q) > \eta_2 H(q)$ and $H(q) > \hat \theta^{-1} H(p)$. We now prove a diameter bound and an upper mean curvature bound for points in $D$. Let $q_0$ be an arbitrary point on $D$. $M$ is noncompact, so by Corollary \ref{nearby_neck}, we can find constants $\hat a$ and $\hat b$, depending upon $\eta_2$, $n$, and $\gamma_1$, but independent of $q_0$, such that every point $q$ in the intrinsic ball $B:= B_g(q_0, \hat a H(q_0)^{-1})$ satisfies $H(q) \geq \hat b^{-1} H(q_0)$ and there exists a point $q_1 \in B_g(q_0, \hat a H(q_0)^{-1})$ with $\lambda_1(q_1) \leq \eta_2 H(q_1)$. Since $\lambda_1 > \eta_2H$ on the connected region $D \cup N$, it follows that the point $q_1$ is contained in $\tilde D$ and therefore the ball $B$ has nonempty intersection with $N$. The mean curvature of points in $N$ is upper bounded by $2H(p)$ and the mean curvature of points in $B$ is lower bounded by $\hat b^{-1} H(q_0)$. Putting these together gives the upper bound $H(q_0) \leq 2\hat bH(p)$. The intrinsic distance from $q_0$ to $N$ is bounded by $\hat a H(q_0)^{-1} \leq \hat a \hat \theta H(p)^{-1}$ and the distance between any two points on the neck $N$ is bounded by $100nLH(p)^{-1}$. Therefore $d_g(q_0, p) \leq (\hat a \hat \theta + 100nL) H(p)^{-1}$. This implies the diameter bound for $D$. Choosing 
\[
C_0 := \max\{\eta_2^{-1}, \hat \theta, 2\hat b, 2(\hat a\hat \theta + 100nL)\}
\]
completes the proofs of parts (4) and (5) of the theorem.  \\

\textit{Step 2:} There exists $\varepsilon_1 \in (0, \varepsilon_0)$, depending upon $n$, $L$, $\varepsilon_0$, $\gamma_1$, and $\gamma_2$, such that if $p$ and $N$ satisfy the assumptions of the theorem, then parts (1) and (3) of the theorem hold.\\
 
Since $\varepsilon_1 \leq \varepsilon_0$, part (3) of the theorem holds. Via the Neck Detection Lemma, we first choose $\eta_0:=\eta_0(\varepsilon_0, n, L, \gamma_1, \gamma_2) \in (0, \varepsilon_0)$ so that if $\lambda_1(q) \leq \eta_0 H(q)$, then $q$ lies on an $(\varepsilon_0, L)$-neck. Taking $\hat \varepsilon_0:= \hat \varepsilon_0(\eta_0, n) \in (0, \eta_0)$ sufficient small and using the Neck Detection Lemma once more, we can find $\hat \eta_0:=\hat \eta_0(\hat \varepsilon_0, n, L, \gamma_1, \gamma_2) \in (0, \eta_0)$ so that if $\lambda_1(q) \leq \hat \eta_0 H(q)$, then $q$ lies at the center of an $(\hat \varepsilon_0, L)$-neck with the property that $\lambda_1 \leq \frac{1}{2}\eta_0 H$ everywhere on the neck. We can now fix our choice of $\varepsilon_1$. Recall that we have $\langle \nu, \omega \rangle \geq 0$ everywhere on $M$ and $\langle \nu, \omega \rangle \leq C\varepsilon_1$ on the neck $N$. We assume $\varepsilon_1$ is sufficiently small so as to satisfy the following two inequalities: 
\[
\sup_{q \in N} \langle \nu(q), \omega \rangle  \leq  \hat \eta_0 \quad \text{ and }\quad \sup_{q \in N} \frac{\lambda_1(q)}{H(q)} \leq \eta_0. 
\]
It is clear we have chosen $\varepsilon_1$ in a way that only depends upon the given constants. 

As in the previous step, we consider the surfaces $\Sigma_y$, but now for $y < 0$.  To flip our orientation, define $\tilde \omega := - \omega$ and let $z := -y$ so that $z_{\max} := y_{\min}$. Then $\langle \nu, \tilde \omega \rangle \leq 0$ everywhere on $M$ and $- \hat \eta_0 \leq \langle \nu(q), \tilde \omega \rangle$ for all $q \in \Sigma_0$. Consider the curves $\tilde \gamma(z, q) := \gamma(-z, q)$ for $z \in [0, z_{\max})$ and all $q \in \Sigma_0$. Let $\tilde \Sigma_z := \Sigma_{-z}$ and $\tilde \Sigma(z_1, z_2) = \Sigma(-z_2, -z_1)$. First, we observe $z_{\max} = \infty$. This is because $\frac{d}{dz} \langle \nu, \tilde \omega \rangle = \frac{d}{dy} \langle \nu, \omega \rangle \geq \lambda_1 > 0$ which shows $\langle \nu, \tilde \omega \rangle$ is increasing in $z$. On the other hand $\langle \nu, \tilde \omega \rangle \leq 0$. In other words, for all $q \in \tilde \Sigma(0, z_{\max})$, $- \hat \eta_0 \leq \langle \nu(\tilde \gamma(z, q)), \tilde \omega \rangle \leq 0$, so we can never encounter a critical point of the height function in the direction of increasing $z$.

By construction, every point in $N$ lies at the center of an $(\varepsilon_0, L)$-neck. For sake of contradiction, suppose that there exist points in $\tilde D$ that do not lie at the center of an $(\varepsilon_0, L)$-neck. Let $\tilde z \in [0, \infty)$ be maximal among heights such that $\frac{\lambda_1}{H} \leq \eta_0$ holds for every $z \in [0, \tilde z]$. We have assumed $\varepsilon_1$ is sufficiently small so that this inequality holds at every point on $N$, which implies $\tilde z \geq \frac{1}{2}Lr_p > 0$. By maximality of $\tilde z$, we can find $\tilde q \in \tilde \Sigma_{\tilde z}$ such that $\lambda_1 (\tilde q) = \eta_0 H(\tilde q)$. In other words, $\tilde q$ barely lies on an $(\varepsilon_0, L)$-neck $\tilde N$. As we argued in the previous step, $\tilde \Sigma (\tilde z - 2 r_{\tilde q}, \tilde z) \subset \tilde N$. In view of our choice of $\hat \eta_0$, we must have $\lambda_1 > \hat \eta_0 H \geq \frac{1}{2}\hat \eta_0 H(\tilde q)$ on $\tilde \Sigma(\tilde z - 2 r_{\tilde q}, \tilde z)$, because otherwise we would contradict $\lambda_1(\tilde q) = \eta_0 H(\tilde q)$. Following the argument of the previous step, for any $q \in \tilde \Sigma_{0}$ we find 
\begin{align*}
\langle \nu(\gamma(\tilde z,q)), \tilde \omega \rangle & = \langle \nu(\gamma( \tilde z - 2r_{\tilde q}, q)), \tilde \omega \rangle + \int_{\tilde z - 2 r_{\tilde q}}^{\tilde z} \frac{d}{dz} \langle \nu, \tilde \omega \rangle \, dz \\
&\geq - \hat \eta_0 + 2 r_{\tilde q} \frac{1}{2} \hat \eta_0 H(\tilde q)\\ 
& =  (n-2) \hat \eta_0  > 0. 
\end{align*}
This contradicts our previous observation that $\langle \nu, \tilde \omega \rangle \leq 0$. This completes the proof of this step and the theorem. 
\end{proof}

For our final result in this section, we show there exist points satisfying the hypothesis of the theorem above. 

\begin{lemma}\label{existence_neck}
Suppose $\mathcal M$ is noncompact and satisfies $(\ast)$. Suppose $L \geq 100$. If $0 < \varepsilon_1 < \varepsilon(n)$ is sufficiently small, then for every $t_0 \in (-\infty, 0]$, we can find a point $p_0 \in  M$ that lies at the center of an $(\varepsilon_1, L)$-neck at time $t_0$, but not at the center of an $(\frac{\varepsilon_1}{2}, 2L)$-neck at time $t_0$. 
\end{lemma}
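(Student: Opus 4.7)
The approach will be by contradiction. Suppose that for the chosen $\varepsilon_1$ and some time $t_0 \in (-\infty, 0]$, every center of an $(\varepsilon_1, L)$-neck at time $t_0$ is also a center of an $(\varepsilon_1/2, 2L)$-neck at time $t_0$. The plan is to (i) produce one initial neck center, (ii) iterate along the axis from Lemma \ref{approx_axis} to obtain an infinite chain of neck centers at strictly increasing heights, and (iii) contradict this with the boundedness of the height function on the image $F(M, t_0)$.

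To produce the initial neck: since $M$ is noncompact, Corollary \ref{nearby_neck} forbids $\lambda_1 > \eta H$ from holding everywhere on $M$ at time $t_0$ for any fixed $\eta > 0$, as the corollary would then force the connected component to coincide with the compact ball $B_g(p, \hat a H(p)^{-1})$. Hence for every $\eta > 0$ there is a point $q \in M$ with $\lambda_1(q, t_0) \leq \eta H(q, t_0)$. Choosing $\eta$ small enough and applying the Neck Detection Lemma \ref{neck_detection} with the parameters $(\varepsilon_1/2, 2L)$, one obtains a point $p_0 \in M$ that is the center of an $(\varepsilon_1/2, 2L)$-neck at time $t_0$.

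For the iteration, fix the axis $\omega$ provided by Lemma \ref{approx_axis} and set $y(q) = \langle F(q, t_0) - F(p_0, t_0), \omega \rangle$. The key observation is that if $p$ is the center of an $(\varepsilon_1/2, 2L)$-neck and $q$ lies on that neck at axial distance $L r_p$ from $p$ (with $r_p = (n-1)/H(p)$), then restricting the defining graph function $u$ to the sub-cylinder of half-length $L$ centered at $q$ yields an $(\varepsilon_1/2, L)$-neck at $q$ whose $C^{10}$-norm is bounded by $\varepsilon_1/2 < \varepsilon_1$; in particular $q$ is the center of an $(\varepsilon_1, L)$-neck. Since the axis of every $(\varepsilon_1, L)$-neck is $O(\varepsilon_1)$-close to $\omega$ by Lemma \ref{approx_axis}, a step of axial length $Lr_p$ in the $+\omega$ direction raises $y$ by at least $\tfrac{1}{2} L r_p$. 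Applying the contradiction hypothesis at each stage produces an infinite sequence $\{p_i\}_{i\geq 0}$ of $(\varepsilon_1/2, 2L)$-neck centers with $y(p_{i+1}) - y(p_i) \geq \tfrac{1}{2} L r_{p_i} > 0$.

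For the contradiction, Proposition \ref{optimal_two_convexity} gives $\lambda_1 + \lambda_2 \geq H/(n-1)$ throughout $F(M, t_0)$. Together with strict convexity and noncompactness this forces the recession cone of the convex body enclosed by $F(M, t_0)$ to be one-dimensional and generated by $-\omega$: any second independent recession direction $v$ would produce, via rescaling $|q_k|^{-1} F(q_k, t_0)$ along a sequence escaping to infinity in that direction, a limiting convex cone with two independent directions of vanishing principal curvature, which together with $\lambda_1 + \lambda_2 \geq \beta H$ would force $H \equiv 0$ on the limit and contradict strict convexity (pointedness of the cone). Hence every slice $\{y = c\} \cap F(M, t_0)$ is bounded and $\sup y < \infty$, so $\{y \geq y(p_0)\} \cap F(M, t_0)$ is compact. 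On this compact set $H$ is bounded above by some $H_{\max}$, giving $r_{p_i} \geq (n-1)/H_{\max}$ and $y(p_{i+1}) - y(p_i) \geq L(n-1)/(2 H_{\max})$ uniformly in $i$; summing yields $y(p_i) \to \infty$, contradicting boundedness of $y$. The most delicate step is justifying the one-dimensionality of the recession cone; an alternative I expect to work in the paper's framework is to apply Lemma \ref{curves}(3) directly to the integral curves of $\omega^\top/|\omega^\top|^2$ from a cross-section of the initial neck, whose finite $y_{\max}$ and uniform lower bound on $H$ produce the same compactness along the iteration without explicitly invoking the recession cone.
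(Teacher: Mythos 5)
Your strategy is genuinely different from the paper's. The paper argues by an extremal choice: it first produces a point $q_0$ that does \emph{not} lie at the center of a fine neck (if every point did, then $0\le\langle\nu,\omega\rangle\le\frac{1}{100}$ everywhere, so the integral curves of $\omega^\top/|\omega^\top|^2$ could be continued indefinitely in both directions, contradicting the fact that one component of the complement of a neck is bounded and hence carries a critical point of the height), and then takes $p_0$ to be an $(\varepsilon_1,L)$-neck center of least intrinsic distance to $q_0$; if $p_0$ were the center of an $(\frac{\varepsilon_1}{2},2L)$-neck, one could move slightly toward $q_0$ along that finer, longer neck and still find $(\varepsilon_1,L)$-neck centers, contradicting minimality. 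Your initial step (Corollary \ref{nearby_neck} plus the Neck Detection Lemma to produce one neck center) agrees with the paper, and your subcylinder-recentering step is of the same nature as the move the paper itself makes, so I do not count the glossed renormalization (recentering at $q$ with $H(q)\neq H(p_i)$ really yields a $(C(n)\varepsilon_1,L)$-neck rather than an $(\varepsilon_1,L)$-neck) as the main issue. The genuine gap is exactly where you flag it: the compactness of $\{y\ge y(p_0)\}\cap M$, i.e.\ $\sup_M y<\infty$, which your whole contradiction rests on (without an upper bound on $H$ along the chain, $H(p_i)$ could a priori grow geometrically and the increments $\tfrac12 L r_{p_i}$ could sum to a finite amount). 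Your recession-cone justification does not work as stated: blow-downs $|q_k|^{-1}F(M,t_0)$ converge to the recession cone only in the Hausdorff sense, and without noncollapsing --- which is precisely what the paper is in the business of proving --- there is no smooth subconvergence, so one cannot speak of two independent directions of vanishing principal curvature on the limit, nor transfer $\lambda_1+\lambda_2\ge\beta H$ to it. Nor is ``one-dimensional recession cone'' a soft consequence of convexity and the choice of $\omega$: for a general strictly convex complete noncompact hypersurface with $\langle\nu,\omega\rangle\ge 0$ (e.g.\ the graph of $|x'|^2+e^{x_n}$, $x'=(x_1,\dots,x_{n-1})$, with $\omega=e_n$ an admissible choice in Lemma \ref{approx_axis}) the height is unbounded above, so the claim genuinely requires the neck/two-convexity structure and is essentially of the same depth as the structure theorem itself.

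Your fallback via Lemma \ref{curves}(3) is the right repair, but it is not a direct application: the hypothesis there is $\langle\nu,\omega\rangle>0$ on the cross-section $\Sigma_0$, whereas the neck structure only gives $0\le\langle\nu,\omega\rangle\le C\varepsilon_1$, and under your contradiction hypothesis you cannot argue as in Theorem \ref{neck_cap_noncompact}, where the lower bound $\lambda_1\ge\eta_1 H$ on the neck comes precisely from the assumption that the center does \emph{not} lie on a finer neck. The positivity can still be rescued pointwise: if $\langle\nu(q),\omega\rangle=0$ at some $q\in\Sigma_0$, run the integral curve a short way backwards inside the neck and use $\frac{d}{dy}\langle\nu,\omega\rangle\ge\lambda_1>0$ together with $\langle\nu,\omega\rangle\ge 0$ everywhere to get a contradiction; compactness of $\Sigma_0$ then yields a positive minimum, Lemma \ref{curves}(3) gives $y_{\max}<\infty$ and shrinking level sets, and the needed compactness and upper curvature bound on $\{y\ge y(p_0)\}$ follow. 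As written, however, this step is only asserted (``I expect to work''), and it is the crux; until it is supplied, the proof is incomplete, whereas the paper's nearest-point argument avoids the issue entirely.
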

\begin{proof}
Fix a time $t_0$ and for simplicity, let us suppress $t_0$ in our notation. As in Lemma \ref{approx_axis}, let $\omega$ be a unit vector in $\R^{n+1}$ such that $\langle \nu, \omega \rangle \geq 0$ everywhere on $M$. Assume $\varepsilon_1$ is sufficiently small so that $\langle \nu, \omega \rangle \leq \frac{1}{100}$ on any $(2\varepsilon_1, \frac{L}{2})$-neck. First, we claim there exists a point $q_0 \in M$ that does not lie on an $(2\varepsilon_1, \frac{L}{2})$-neck. This is clear for topological reasons. If every point in $M$ lies at the center of an $(2\varepsilon_1, \frac{L}{2})$-neck, then $0 \leq \langle \nu, \omega \rangle \leq \frac{1}{100}$ everywhere on $M$. Let $p \in M$ be an arbitrary point lying at the center of an $(2\varepsilon_1, \frac{L}{2})$-neck $N$. Let $y$ denote the height coordinate defined by $\omega$ normalized so that $y(p) = 0$. The estimate $0 \leq \langle \nu, \omega \rangle \leq \frac{1}{100}$ implies integral curves of $\frac{\omega^\top}{|\omega^\top|^2}$ can be continued indefinitely in either direction of the neck $N$. On the other hand, one connected component of $M \setminus N$ is bounded, and therefore the height $y$ has a one-sided bound. At a critical point of $y$ on the bounded component we find a contradiction.  

Now if $q_0$ does not lie at the center of an $(2\varepsilon_1, \frac{L}{2})$-neck, there is an open neighborhood around $q_0$ that does not contain any points at the center of an $(\varepsilon_1, L)$-neck. On the other hand, Corollary \ref{nearby_neck} and the Neck Detection Lemma imply there exist points in $M$ that lie at the center of $(\varepsilon_1, L)$-necks. Among all such points, let $p_0$ be a point of least intrinsic distance to $q_0$. Then $p_0$ lies at the center of an $(\varepsilon_1, L)$-neck, but does not lie at the center of an $(\frac{\varepsilon_1}{2}, 2L)$-neck. If $p_0$ were at the center of the finer neck, we could express the region around $p_0$ as a graph over an cylinder $\Sigma := S^{n-1} \times [-2L, 2L]$. Following a minimal geodesic connecting $p_0$ to $q_0$ for a little ways and restricting our attention to a suitable subcylinder of $\Sigma$, we will find points that lie at the center of $(\varepsilon_1, L)$-necks closer to $q_0$ than $p_0$, a contradiction. 
\end{proof}


\section{Neck and Cap Decomposition: Compact Case}

In this section, we will assume $M$ is compact and prove a structure theorem analogous to Theorem \ref{neck_cap_noncompact}. Our first lemma shows that if $\mathcal M$ is not a family of shrinking round spheres, then for sufficiently negative times, the solution must contain a very fine neck.  

\begin{lemma}\label{asymptotics}
Suppose $\mathcal M$ is compact and satisfies $(\ast)$. Then either $\mathcal M$ is a family of shrinking round spheres or for every $\eta > 0$, we can find a time $T_\eta \in (-\infty, 0]$ such that for every $t \leq T_\eta$, there exists a point $p \in M$ where $\lambda_1(p, t) < \eta H(p, t)$. 
\end{lemma}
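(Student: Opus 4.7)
The plan is to reformulate the dichotomy in terms of a single monotone quantity, and then reduce the case where that quantity stays bounded away from zero to the classification of compact uniformly convex ancient solutions due to Huisken--Sinestrari \cite{HS15}. Set
\[
\eta(t) \;:=\; \inf_{p \in M}\, \frac{\lambda_1(p,t)}{H(p,t)}, \qquad t \in (-\infty, 0].
\]
Because $M$ is compact and $H > 0$ (by strict convexity in $(\ast)$), the infimum is attained and $\eta(t) \in [0, 1/n]$. Note that the second alternative of the lemma is simply the statement $\lim_{t \to -\infty}\eta(t) = 0$: if it holds, compactness of $M$ supplies a witnessing point $p$ at each time $t \leq T_\eta$; conversely, the existence of such witnesses at arbitrarily negative times forces $\eta(t) \to 0$. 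So it suffices to prove that one of the two alternatives $\lim_{t\to-\infty}\eta(t)=0$ or ``$\mathcal M$ is a shrinking sphere'' must hold.

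The first step is to show that $\eta(t)$ is nondecreasing in $t$; equivalently, for each fixed $c > 0$ the cone $\{\lambda_1 \geq c H\}$ is preserved forward in time. I would work with the $(1,1)$-tensor $U^i{}_j := h^i{}_j - c H \delta^i{}_j$ and use the standard evolution equations $\pdv{t} h^i{}_j = \Delta h^i{}_j + |A|^2 h^i{}_j$ and $\pdv{t} H = \Delta H + |A|^2 H$ to compute
\[
\pdv{t}\, U^i{}_j \;=\; \Delta U^i{}_j + |A|^2\, U^i{}_j.
\]
Since $M$ is closed, Hamilton's tensor maximum principle (as in Chapter 9 of \cite{Bre10}) then shows that $U \geq 0$ is preserved under the flow, which is exactly the monotonicity of $\eta(t)$.

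With monotonicity established, the limit $\eta_{-\infty} := \lim_{t\to-\infty}\eta(t) \in [0, 1/n]$ exists. If $\eta_{-\infty} = 0$, we recover the second alternative by the remark above. If instead $\eta_{-\infty} > 0$, then $\lambda_1 \geq \eta_{-\infty} H$ holds on all of $\mathcal M$, so $\mathcal M$ is a compact, uniformly convex, ancient solution of the mean curvature flow; the classification theorem of Huisken and Sinestrari \cite{HS15} (in the spirit of Hamilton's earlier curve result \cite{Ham94}) then forces $\mathcal M$ to be a family of shrinking round spheres, which is the first alternative.

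The only substantive external input is \cite{HS15}; the rest is bookkeeping built on the tensor maximum principle. The one potential obstacle I anticipate is merely tracking the sign conventions in the evolution equation computation, but the argument is otherwise routine.
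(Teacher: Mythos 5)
Your argument is correct and is essentially the paper's proof: both reduce the lemma to the forward-in-time preservation of the pinching cone $\{\lambda_1 \geq c H\}$ (which the paper quotes from Huisken's work, and which your tensor maximum principle computation for $U^i{}_j = h^i{}_j - cH\delta^i{}_j$ reproves correctly, the reaction term $|A|^2 U$ satisfying the null-eigenvector condition trivially) together with the Huisken--Sinestrari classification \cite{HS15} of compact, uniformly convex ancient solutions. Your packaging via the monotone quantity $\eta(t)$ and its limit is only a cosmetic reformulation of the paper's contrapositive argument.
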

\begin{proof}
By the classical result of Huisken \cite{Hui84}, the inequality $h_{ij} \geq \eta H g_{ij}$ is preserved by the mean curvature flow for compact initial data in $\R^{n+1}$. Suppose for some $\eta > 0$, there exists a sequence of times $t_j \to -\infty$ such that at each time $t_j$ we have $h_{ij} \geq \eta Hg_{ij}$ on $M$. Since this inequality is preserved forward in time, we conclude $h_{ij} \geq \eta H g_{ij}$ for all $t \in (-\infty, 0]$. The main result of \cite{HS15} then implies $\mathcal M$ is a family of shrinking round spheres. 
\end{proof}

In the compact setting, evidently there is no unit vector $\omega$ that satisfies $\langle \nu, \omega \rangle \geq 0$ everywhere to serve as an approximate axis for every neck. Nevertheless, convexity still implies that the axes of different necks cannot differ by much. 

\begin{lemma}\label{approx_axis_compact}
Suppose $F : M \to \R^{n+1}$ is an embedding of a closed, convex hypersurface. Let $L \geq 100$. Suppose $N \subset M$ is an $(\varepsilon, L)$-neck and let $\omega$ denote its axis. There are constants $C := C(n)$ and $\varepsilon(n) > 0$, such that if $0 < \varepsilon < \varepsilon(n)$ and $\tilde N \subset M$ is any other $(\varepsilon, L)$-neck, then $|\langle \omega, \nu \rangle |\leq C \varepsilon$ on $\tilde N$.
\end{lemma}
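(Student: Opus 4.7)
The plan is to first show that the two axes must be nearly parallel or antiparallel, $|\langle \omega, \tilde\omega\rangle| \geq 1 - C\varepsilon$, and then deduce the conclusion. For the reduction, decompose $\omega = \langle \omega, \tilde\omega\rangle \tilde\omega + \omega^\perp$ with $|\omega^\perp| = \sqrt{1 - \langle \omega, \tilde\omega\rangle^2}$. Since $|\langle \tilde\omega, \nu\rangle| \leq C\varepsilon$ on $\tilde N$ by Definition \ref{necks}, this yields $|\langle \omega, \nu\rangle| \leq |\langle\tilde\omega, \nu\rangle| + |\omega^\perp| \leq C\varepsilon$ on $\tilde N$, as desired.

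To prove the axis-parallelism claim, I would split into two cases. In the first case, $\tilde N \cap N \neq \emptyset$: pick any common point $q^*$. The graph descriptions of $N$ and $\tilde N$ both say that, after rescaling so that $H(q^*) = n-1$, the second fundamental form of $M$ at $q^*$ is within $O(\varepsilon)$ of that of a standard unit cylinder with axis $\omega$ (respectively $\tilde\omega$). The cylinder's shape operator has eigenvalues $(0, 1, \dots, 1)$, with a simple zero eigenvalue in the axis direction and a gap of $1$ to the next eigenvalue. For $\varepsilon$ small, standard eigenvalue perturbation implies that the lowest eigenvector of the shape operator of $M$ at $q^*$ is well-defined up to sign and must be $C\varepsilon$-close to both $\omega$ and $\tilde\omega$, so $|\omega \mp \tilde\omega| \leq C\varepsilon$ for an appropriate choice of sign.

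In the second case, $\tilde N \cap N = \emptyset$, so $\tilde N$ is contained in a single connected component $D$ of $M \setminus N$. Here I would use strict convexity of $M$, which holds in the paper's application by Definition $(\ast)$. By strict convexity and closedness the Gauss map $\nu : M \to S^n$ is a diffeomorphism, $\nu(N)$ lies in a $C\varepsilon$-tubular neighborhood of the equator $\{\xi \in S^n : \langle \xi, \omega\rangle = 0\}$, and by connectedness each of the two components of $M \setminus N$ maps into one of the two spherical caps $\{\pm\langle \xi, \omega\rangle > -C\varepsilon\}$. Without loss of generality, $\langle \nu, \omega\rangle \geq -C\varepsilon$ on $D \supset \tilde N$. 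On the other hand, by the graph description of $\tilde N$ the normals at points of its central cross-sectional sphere sweep out a $C\varepsilon$-neighborhood of the unit sphere perpendicular to $\tilde\omega$; in particular, there exists $q \in \tilde N$ with $\nu(q)$ within $C\varepsilon$ of $-\omega^\perp/|\omega^\perp|$, giving $\langle \nu(q), \omega\rangle \leq -|\omega^\perp| + C\varepsilon$. Combining with $\langle \nu(q), \omega\rangle \geq -C\varepsilon$ yields $|\omega^\perp| \leq 2C\varepsilon$, which is the desired parallelism.

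The main obstacle is the nonintersecting case, which requires a nonlocal argument invoking the Gauss map diffeomorphism. The intersecting case is by contrast a quick local fact about the eigenspaces of small perturbations of the cylinder's shape operator, and is where the information that both necks are expressed as graphs over standard cylinders is used most directly.
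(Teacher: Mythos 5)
Your overall strategy (reduce to showing the two axes are $C\varepsilon$-parallel, then transfer the normal estimate from $\tilde\omega$ to $\omega$) is the same as the paper's, and your intersecting case is a legitimate alternative to the paper's corresponding case: the paper uses the normal-sweeping property of $\tilde N$ near $\tilde p$ together with $|\langle\nu,\omega\rangle|\leq C\varepsilon$ on $N$, whereas your eigenvector-stability argument for the shape operator at a common point is purely local and works, since the two rescalings agree up to a factor $1+O(\varepsilon)$ at that point and the cylinder's zero eigenvalue is simple with gap one. Two small remarks: the intermediate claim should be stated as $|\omega\mp\tilde\omega|\leq C\varepsilon$ (equivalently $|\omega^\perp|\leq C\varepsilon$) rather than $|\langle\omega,\tilde\omega\rangle|\geq 1-C\varepsilon$, since the latter only yields $|\omega^\perp|=O(\sqrt{\varepsilon})$; your two cases do prove the stronger form, so the reduction is fine as executed. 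Also, your appeal to the Gauss map being a diffeomorphism needs strict convexity, which is not in the lemma's stated hypotheses, though it is available in the paper's application (and the paper's own proof likewise uses $\lambda_1>0$).

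The genuine gap is in the disjoint case, at the assertion that ``by connectedness each of the two components of $M\setminus N$ maps into one of the two spherical caps $\{\pm\langle\xi,\omega\rangle>-C\varepsilon\}$.'' Connectedness only gives that each strict cap $K_\pm=\{\pm\langle\xi,\omega\rangle>C\varepsilon\}$, being connected and disjoint from $\nu(N)$, lies entirely inside $\nu(D_+)$ or inside $\nu(D_-)$; it does not exclude the scenario in which both caps lie in the Gauss image of the same component, equivalently in which the Gauss image of one component is confined to the band $\{|\langle\xi,\omega\rangle|\leq C\varepsilon\}$ --- and excluding this is exactly what your conclusion $\langle\nu,\omega\rangle\geq-C\varepsilon$ on $D\supset\tilde N$ rests on. This is fixable, but it requires an additional global argument, for instance: if the Gauss image of a component $D'$ of $M\setminus N$ were contained in the band, then the height $y=\langle F(\cdot),\omega\rangle$ would have no critical points in the interior of $\bar{D'}\cup N$ (as $|\langle\nu,\omega\rangle|<1$ there), so its extrema over this compact set would be attained on its single boundary sphere, whose height variation is $O(\varepsilon)H(p)^{-1}$, while the two ends of $N$ differ in height by roughly $2L(n-1)H(p)^{-1}$ --- a contradiction; only after this does disjointness of $\nu(D_+)$ and $\nu(D_-)$ give $\nu(D)\cap K_\mp=\emptyset$. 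This missing step is precisely the global input the paper supplies by different means: it shows the point $q\in\tilde N$ with $\nu(q)\approx-v/|v|$ satisfies $y(q)>0$ (via a minimal geodesic and a curvature-ratio comparison), and that $\langle\nu,\omega\rangle\geq-C\varepsilon$ propagates from $\Sigma_0$ along the integral curves of $\omega^\top/|\omega^\top|^2$, which cover $M\cap\{y\geq0\}$.
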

\begin{proof}
Let $L \geq 100$ be given and let $N \subset M$ be an $(\varepsilon, L)$-neck. The axis of $N$, denoted $\omega$, is a unit vector parallel to the axis of the cylinder over which $N$ is a graph. Let $p$ a point on the central sphere of $N$. It follows from the definition of a neck that:
\begin{enumerate}
\item[(i)] $|\langle \nu(q), \omega \rangle |\leq  C\varepsilon$ for every $q \in N$.
\item[(ii)] For every unit vector $e$ orthogonal to $\omega$, there exists a point $q \in N$ with $d_g(p, q) \leq 2\pi r_p$ where $|\nu(q) - e| \leq C\varepsilon$ (recall $r_p = \frac{n-1}{H(p)}$).  
\end{enumerate} 
Now suppose $\tilde N$ is any other $(\varepsilon, L)$-neck in $M$. Let $\tilde p$ be a point on its central sphere and $\tilde \omega$ its axis. We may assume $\langle \omega, \tilde \omega \rangle \geq 0$. Let $y(q) = \langle F(q) - F(p), \omega \rangle$ denote the height function with respect to $\omega$ and let $\Sigma_{0}$ denote the level set $y = 0$. It suffices to prove that $|\omega - \tilde \omega| \leq C\varepsilon$. Evidently, we may assume $\omega \neq \tilde \omega$, otherwise there is nothing to prove. 

First, suppose $d_g(\tilde p, \Sigma_0) \leq \frac{L}{2}r_p$. This means $\tilde p \in N$ and so, by taking $\varepsilon$ small, $H(\tilde p)$ is as close as we like to $H(p)$. In this case, $B_g(\tilde p, 3 \pi r_{\tilde p}) \subset B_g(\tilde p, 4 \pi r_p) \subset N$ since $\tilde p$ has distance at least $\frac{L}{4}r_p \geq 25r_p$ from boundary of $N$. Now if $e$ is a unit vector orthogonal to $\tilde \omega$, it follows from (ii) that we can find a point $q \in B_g(\tilde p, 3\pi r_{\tilde p})$ where $|\nu(q) - e| \leq C\varepsilon$. Since $q \in N$, by property (i), we conclude $|\langle e, \omega \rangle| \leq C \varepsilon$. Summing over all directions orthogonal to $\tilde \omega$ gives $|\omega - \tilde \omega| \leq C \varepsilon$. 

Next, suppose $d_g( \tilde p, \Sigma_0) > \frac{L}{2} r_p$. We may assume $\omega$ is orientated so that $y(\tilde p) > 0$. Let $v := \omega  - \langle \omega, \tilde \omega \rangle \tilde \omega$. By (ii), we can find a point $q \in \tilde N$ with $d_g(\tilde p, q) \leq 2r_{\tilde p}$ where $|\nu(q) + \frac{v}{|v|}| \leq C \varepsilon$. We cannot have $y(q) < 0$. Otherwise, by convexity of $M$ a minimal geodesic connecting $\tilde p$ to $q$ (which must remain in $\tilde N$) crosses the neck $N$ and so $d_g(\tilde p, q) \geq \frac{L}{2} r_p$. Hence $\frac{L}{2} r_p \leq 2 r_{\tilde p}$, or, equivalently, $\frac{H(\tilde p)}{H(p)} \leq \frac{4\pi}{L}\leq \frac{1}{5}$. However, since $N$ and $\tilde N$ intersect (and $\varepsilon$ is small) $\frac{H(\tilde p)}{H(p)} \geq \frac{1}{2}$, which gives a contradiction. Thus, $y(q) > 0$. In the compact and convex setting, the integral curves of $\frac{\omega^\top}{|\omega^\top|^2}$ emanating from $\Sigma_0$ cover $M \cap \{y \geq 0\}$. Using that $\frac{d}{dy} \langle \nu, \omega \rangle \geq \lambda_1 > 0$, we get $\langle \nu(q), \omega\rangle \geq - C\varepsilon$. Finally, as before
\[
- C \varepsilon \leq \langle \nu(q), \omega \rangle = - \big\langle \frac{v}{|v|}, \omega \big\rangle + \big\langle \nu(q) + \frac{v}{|v|}, \omega \big\rangle \leq -\sqrt{1 - \langle \omega, \tilde \omega \rangle^2} +  C \varepsilon.
\]
Hence $|\omega - \tilde \omega | \leq C \varepsilon$. This completes the proof. 
\end{proof}

We now prove the structure theorem for compact $\mathcal M$ satisfying $(\ast)$. 

\begin{theorem}\label{neck_cap_compact}
Suppose $\mathcal M$ is compact, satisfies $(\ast)$, and is not a family of shrinking round spheres. Given $0 < \varepsilon_0 < \varepsilon(n)$ small and $L \geq 100$, there exist constants $C_0 < \infty$ and $T_0 \leq 0$, depending only upon $\varepsilon_0$, $L$, $n$, $\gamma_1$ and $\gamma_2$, so that the following holds. For any time $t \in (-\infty, T_0]$, we can find disjoint domains $D_{1}, D_{2} \subset M$, and points $p_1, p_2 \in M$ that lie at the center of necks $N_1, N_2$ such that
\begin{enumerate}
\item[(1)] Every point in $M \setminus (D_{1} \cup D_{2})$ lies at the center of an $(\varepsilon_0, L)$-neck at time $t$. 
\item[(2)] $D_1$ and $D_2$ are diffeomorphic to $B^n$. 
\item[(3)] $\d D_1 \subset \d N_1$ and $\d D_2 \subset \d N_2$ are cross-sectional spheres of $(\varepsilon_0, L)$-necks. 
\item[(4)] For $i = 1, 2$, the intrinsic diameter of $D_i$ is bounded by $C_0 H(p_i, t)^{-1}$. 
\item[(5)] For $i = 1, 2$, every point $q \in D_i$ satisfies $C_0^{-1} H(p_i,t) \leq H(q,t) \leq C_0 H(p_i,t)$ and $\lambda_1(q,t) \geq C_0^{-1} H(q,t)$. 
\end{enumerate}
\end{theorem}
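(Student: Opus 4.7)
The plan is to adapt the argument of Theorem \ref{neck_cap_noncompact} to the compact case, where the tube region now closes up in convex caps at both ends. The starting ingredient is Lemma \ref{asymptotics}, which, since $\mathcal M$ is not a family of shrinking round spheres, produces $T_0 \leq 0$ depending only on the given constants such that for every $t \leq T_0$ there exists a point $p_0 \in M$ with $\lambda_1(p_0,t)/H(p_0,t)$ as small as desired. Taking this threshold sufficiently small and applying the Neck Detection Lemma \ref{neck_detection}, this $p_0$ lies at the center of an arbitrarily fine neck at time $t$. In particular, for $t \leq T_0$ there exist $(\varepsilon_1, L)$-necks at time $t$, where $\varepsilon_1 \in (0, \varepsilon_0)$ is chosen small enough for the cascade of neck-detection invocations that follow, analogous to the choice of $\varepsilon_1$ in the proof of Theorem \ref{neck_cap_noncompact}.

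Fix such a time $t \leq T_0$. Since $M$ is compact, strictly convex, and hence diffeomorphic to $S^n$, the central sphere of any $(\varepsilon_1, L)$-neck separates $M$ into two open disks. Mimicking the proof of Lemma \ref{existence_neck}, I would locate two extremal points $p_1, p_2 \in M$ on opposite sides of a fine initial neck such that each $p_i$ lies at the center of an $(\varepsilon_1, L)$-neck $N_i$ but not at the center of an $(\tfrac{\varepsilon_1}{2}, 2L)$-neck. Such extremal points exist because the initial neck is much finer than $(\tfrac{\varepsilon_1}{2}, 2L)$, while the point on $M$ where $\lambda_1/H$ is maximized cannot lie on any $(\varepsilon_1, L)$-neck at all; on each side of the initial neck, moving intrinsically toward such a maximum must therefore cross a threshold configuration of the desired type.

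For each $i = 1, 2$, I would then apply Step 1 of the proof of Theorem \ref{neck_cap_noncompact} to $(p_i, N_i)$, with two modifications. First, the global axis from Lemma \ref{approx_axis} is replaced by the axis $\omega_i$ of $N_i$, using Lemma \ref{approx_axis_compact} to control $\langle \nu, \omega_i \rangle$ on any nearby neck. Second, the initial estimate $\langle \nu(\gamma(-2r_{p_i}, q)), \omega_i \rangle \geq 0$ becomes the slightly weaker $\langle \nu(\gamma(-2r_{p_i},q)), \omega_i \rangle \geq -C\varepsilon_1$, which still yields $\langle \nu(q), \omega_i \rangle \geq \tfrac{1}{2}(n-1)\eta_1$ on the cross-sectional sphere provided $\varepsilon_1$ is taken small compared to $\eta_1$. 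The remainder of the Step 1 argument—integral curves of the height function, Corollary \ref{nearby_neck}, and convergence of the curves to a single critical point of the height—applies without essential change, producing two convex caps $D_1, D_2$ satisfying properties (2), (3), (4), and (5) of the theorem.

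The remaining task, establishing part (1), is the main obstacle, since Step 2 of Theorem \ref{neck_cap_noncompact} exploited the fact that the tube was unbounded. To adapt it, I would again invoke Lemma \ref{approx_axis_compact}, which, after a choice of orientation, makes $\omega_1$ and $-\omega_2$ nearly parallel, so that $\omega_1$ provides a consistent height coordinate $y$ along the entire tube joining $D_1$ to $D_2$. Following the structure of Step 2, if some point in the tube failed to lie at the center of an $(\varepsilon_0, L)$-neck, an extremal-height argument at the first height where $\lambda_1/H = \eta_0$ would yield a contradiction via exactly the same integrated inequality for $\langle \nu, \omega_1 \rangle$ as in the noncompact proof. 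The subtlety is ensuring that this extremal height is reached strictly before one enters $D_2$, which is controlled by the uniform convexity estimate on $D_2$ already established in the previous step; disjointness $D_1 \cap D_2 = \emptyset$ then follows automatically from the neck structure filling the region between them.
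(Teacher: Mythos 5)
Your overall plan (caps at both ends plus a tube of necks, with Lemma \ref{asymptotics} supplying the starting fine neck for $t\leq T_0$ and Lemma \ref{approx_axis_compact} replacing the global vector of Lemma \ref{approx_axis}) is in the right spirit, but there is a genuine gap in the way you adapt Step 1, and it propagates into your argument for part (1). You replace the initial estimate $\langle \nu(\gamma(-2r_{p_i},q)),\omega_i\rangle \geq 0$ by $\geq -C\varepsilon_1$ and claim this still yields $\langle \nu,\omega_i\rangle \geq \tfrac12(n-1)\eta_1$ on the cross-sectional sphere ``provided $\varepsilon_1$ is taken small compared to $\eta_1$.'' This is circular: $\eta_1$ is an output of the Neck Detection Lemma with input $\varepsilon_1$ (in fact through two nested applications, $\varepsilon_1 \to \hat\eta \to \hat\varepsilon \to \eta_1$), so $\eta_1 = \eta_1(\varepsilon_1)$ and there is no way to arrange $C\varepsilon_1 \ll \eta_1(\varepsilon_1)$ by shrinking $\varepsilon_1$; the compactness proof of neck detection gives no effective lower bound on $\eta_1$ in terms of $\varepsilon_1$. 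The integrated convexity gain over the last neck length is only $(n-1)\eta_1$, which you cannot guarantee dominates the initial error $C\varepsilon_1$ coming from the fineness of $N_i$ itself. The paper avoids exactly this by decoupling the two scales: after $\hat\eta_1$ is fixed, it chooses a \emph{finer} scale $\varepsilon_2 = \varepsilon_2(\hat\eta_1,\eta_1,n)$ for the central starting neck so that $|\langle\nu,\omega\rangle|\leq\hat\eta_1$ there, propagates this bound along the tube by the monotonicity $\frac{d}{dy}\langle\nu,\omega\rangle\geq\lambda_1>0$, and only then integrates the convexity $\lambda_1\geq\tfrac12\hat\eta_1 H$ over the last $2r_{\tilde p}$, so the gain $(n-1)\hat\eta_1$ beats the error $\hat\eta_1$ by the factor $n-1$. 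Your construction, which starts the integration on the $(\varepsilon_1,L)$-necks $N_i$ located by an extremal-distance argument, never has access to a normal bound better than $O(\varepsilon_1)$ at the starting slice, so positivity of $\langle\nu,\omega_i\rangle$ on $\Sigma_0$ (the hinge of the whole cap argument) is not established.

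The same structural choice undermines your treatment of part (1). In the noncompact Step 2 the contradiction came from the global inequality $\langle\nu,\omega\rangle\geq 0$, which has no analogue on a closed hypersurface: moving along the tube toward $D_2$, $\langle\nu,\omega_1\rangle$ legitimately decreases from $\approx 0$ to $-1$, so ``exactly the same integrated inequality'' produces no contradiction, and your remark about reaching the extremal height before entering $D_2$ does not identify what the contradiction would be. The paper sidesteps this entirely: it defines $p_1$ (resp.\ $p_2$) as the point at the \emph{first} height $\tilde y$, measured from the central $(\varepsilon_2,L)$-neck, where $\lambda_1/H$ reaches $\eta_1$; then the whole tube between the two barely-neck cross-sections satisfies $\lambda_1\leq\eta_1 H\leq\eta_0 H$, so part (1) follows immediately from the choice of $\eta_0$ via Neck Detection, and no Step-2-type argument is needed. (The integration/axis-compatibility argument is instead used \emph{inside} the cap region, to show $\lambda_1>\eta_3 H$ there, since a fine neck in the cap would have an axis incompatible with $\langle\nu,\omega\rangle>(n-2)\hat\eta_1$ by Lemma \ref{approx_axis_compact}.) To repair your proposal you would need to reorganize it along these lines: fix the cascade $\eta_0,\varepsilon_1,\eta_1,\hat\varepsilon_1,\hat\eta_1$ first, take the starting neck at the finer scale $\varepsilon_2$, and select $p_1,p_2$ as first barely-neck points along the height function rather than by the minimal-distance device of Lemma \ref{existence_neck}. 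As a smaller point, your assertion that the maximizer of $\lambda_1/H$ lies on no $(\varepsilon_1,L)$-neck also needs an argument (the paper's mechanism would be the maximum point of the height function, where $\nu$ is parallel to the common approximate axis, contradicting Lemma \ref{approx_axis_compact}), though this is fixable.
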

\begin{proof}
Let $0 < \varepsilon_0 < \varepsilon(n)$ small and $L \geq 100$ be given. As in the proof of Theorem \ref{neck_cap_noncompact}, we begin by fixing certain parameters using the Neck Detection Lemma and the definition of a neck. So that the dependence of the constants is clear, we enumerate our choices. 
\begin{enumerate}
\item[$\bullet$] Using the definition of a neck and Lemma \ref{approx_axis_compact}, we assume $\varepsilon_0 < \varepsilon(n)$ is sufficiently small so that if $\omega$ is the axis of an $(\varepsilon_0, L)$-neck, then $|\langle \nu, \omega \rangle | \leq \frac{1}{100}$ everywhere on the neck. Moreover, we require that if $\tilde \omega$ is the axis any other $(\varepsilon_0, L)$-neck, then up to a choice of orientation, we have $|\omega - \tilde \omega| \leq \frac{1}{100}$. We also assume parts (i) and (iii) of Lemma \ref{curves} hold with respect to the axis of any $(\varepsilon_0, L)$-neck (part (ii) holds automatically) and $\frac{9}{10} \leq \frac{H(q_1)}{H(q_2)} \leq \frac{10}{9}$ for any pair of points $q_1, q_2$ on an $(\varepsilon_0, L)$-neck. 
\item[$\bullet$] Using the Neck Detection Lemma, we choose $\eta_0:= \eta_0(\varepsilon_0, n, L, \gamma_1, \gamma_2) \in(0, \varepsilon_0)$ so that if $\lambda_1(q) \leq \eta_0 H(q)$, then $q$ lies at the center of an $(\varepsilon_0, L)$-neck. 
\item[$\bullet$] Using both the definition of a neck and the Neck Detection Lemma, we choose $\varepsilon_1 := \varepsilon_1(n, \eta_0) \in (0, \eta_0)$ and $\eta_1:= \eta_1(\varepsilon_1, n, L, \gamma_1, \gamma_2) \in (0, \varepsilon_1)$ sufficiently small so that on any $(\varepsilon_1, L)$-neck, we have $\lambda_1 \leq \eta_0 H$ and if $\lambda_1(q) \leq \eta_1 H(q)$, then $q$ lies at the center of an $(\varepsilon_1, L)$-neck. 
\item[$\bullet$] Using both the definition of a neck and the Neck Detection Lemma, we choose $\hat \varepsilon_1 := \hat \varepsilon_1(n, \eta_1) \in (0, \eta_1)$ and $\hat \eta_1:= \hat \eta_1(\hat \varepsilon_1, n, L, \gamma_1, \gamma_2) \in (0, \hat \varepsilon_1)$ sufficiently small so that on any $(\hat \varepsilon_1, L)$-neck, we have $\lambda_1 \leq \frac{1}{2}\eta_1 H$ and if $\lambda_1(q) \leq \hat \eta_1 H(q)$, then $q$ lies at the center of an $(\hat \varepsilon_1, L)$-neck. 
\item[$\bullet$] Using the definition of a neck, we choose $\varepsilon_2 := \varepsilon_2(\hat \eta_1, \eta_1, n)$ so that the axis $\omega$ of an $(\varepsilon_2, L)$-neck satisfies $|\langle \nu, \omega \rangle|  \leq \hat \eta_1$ on $N$ and so that $\lambda_1 \leq \frac{1}{2}\eta_1 H$ everywhere on the neck.
\item[$\bullet$] Using both the definition of a neck and the Neck Detection Lemma, we choose $\varepsilon_3 := \varepsilon_3(n, \hat \eta_1) \in (0, \hat \eta_1)$ and $\eta_3 := \eta_3(\varepsilon_3, n, L, \gamma_1, \gamma_2)\in (0, \varepsilon_3)$ so that if $\lambda_1(q) \leq \eta_3H(q)$, then $q$ lies at the center of an $(\varepsilon_3, L)$-neck $N$ for which the axis $\omega$ satisfies $|\langle \nu, \omega \rangle| \leq \hat \eta_1$. 
\item[$\bullet$] Finally, using Lemma \ref{asymptotics} and the Neck Detection Lemma, we choose $T_0:= T_0(\varepsilon_2) \leq 0$, such that if $t \leq T_0$, then we can find a point that lies at the center of an $(\varepsilon_2, L)$-neck. 
\end{enumerate} 

Fix a sufficiently negative time $t \leq T_0$. We can find a point $p \in M$ that lies at the center of an $(\varepsilon_2, L)$-neck $N$ at time $t$. Having fixed it, let us suppress $t$ from our notation. Let $\omega$ be the axis of $N$ satisfying $|\langle \nu, \omega \rangle| \leq \hat \eta_1$. Our goal is to follow the neck $N$ in the direction $\omega$ until we encounter a point which barely lies on an $(\varepsilon_1, L)$-neck. As in the notation of the previous section, let $y$ denote the height function with respect to axis defined by $\omega$; let $\Sigma_y$ for $y \in [0, y_{\max})$ denote the level sets of the height function (with $p \in \Sigma_0$); and let $\gamma(\cdot, y)$ denote the integral curves of $\frac{\omega^\top}{|\omega^\top|^2}$. Note $y_{\max} < \infty$ because $M$ is compact. By definition every integral curve is defined for $y \in [0, y_{\max})$ and there exists some point $q \in \Sigma_0$ such that $\gamma(q, y) \to \tilde q \in M$ as $y \to y_{\max}$ satisfying $\langle \nu(\tilde q), \omega \rangle = 1$.  As we noted in the proof of Theorem \ref{neck_cap_noncompact}, since our hypersurface is strictly convex, elementary arguments imply $\Sigma_{y_{\max}} := \{ \lim_{y \to y_{\max}} \gamma(q, y) : q \in \Sigma_0\} = \{\tilde q\}$; so every curve converges to the same point. 

Let $\tilde y \in [0, y_{\max})$ be the supremum over heights $y$ such that $\lambda_1 \leq \eta_1 H$ holds for every $y \in [0, \tilde y]$. Since $\lambda_1 \leq \frac{1}{2} \eta_1 H$ on $N$, we have $\tilde y \geq \frac{1}{2}L r_p > 0$. On the other hand, if $\tilde q$ lies on an $(\varepsilon_1, L)$-neck, then the axis of such a neck would be nearly orthogonal to $\omega$. However, this  contradicts our application of Lemma \ref{approx_axis_compact} in the first bullet point above. Hence $\lambda_1(\tilde q) > \eta_0 H(\tilde q)$ and $\tilde y < y_{\max}$. 

On $\Sigma_{\tilde y}$, we find a point $\tilde p$ satisfying $\lambda_1(\tilde p) = \eta_1 H(\tilde p)$, which barely lies at the center of $(\varepsilon_1, L)$-neck $\tilde N$. As we have argued before, this implies $\lambda_1 \geq \hat \eta_1 H \geq \frac{1}{2} \hat \eta_1 H(p)$ on $\Sigma(\tilde y - 2 r_{\tilde p}, \tilde y)$. Otherwise, there is a point on $\tilde N$ near $\tilde p$ which lies on an $(\hat \varepsilon_1, L)$-neck, which would contradict $\lambda_1(\tilde p) = \eta_1 H(\tilde p)$. Now as in Step 2 of Theorem \ref{neck_cap_noncompact}, integrating this convexity lower bound along the integral curves of $\omega$, we get $\langle \nu, \omega \rangle > (n-2)\hat \eta_1$ on $\Sigma_{\tilde y}$. We can obtain the uniform convexity, mean curvature, and diameter estimates for the region $\Sigma(\tilde y, y_{\max})$ in the same fashion as Step 1 in the proof of Theorem \ref{neck_cap_noncompact}. The estimate $\langle \nu, \omega \rangle > (n-2)\hat \eta_1$ holds for $y \in [\tilde y, y_{\max}]$ because $\langle \nu, \omega \rangle$ is increasing along integral curves. The mean curvature lower bound, $H \geq \theta^{-1} H(\tilde p)$, follows from part (iii) of Lemma \ref{curves}. If a point $q$ in the region $\Sigma(\tilde y, y_{\max})$ satisfies $\lambda_1(q)\leq \eta_3 H(q)$, then $q$ lies at the center of an $(\varepsilon_3, L)$-neck $\hat N$. Our assumptions imply axis $\hat \omega$ of this neck cannot equal $\omega$ otherwise we contradict the estimate $\langle \nu, \omega \rangle > (n-2)\hat \eta_1$. However, if $\omega$ and $\hat \omega$ differ, then by arguments we have made before, $(n-2) \hat \eta_1 \leq - \sqrt{1 - \langle \omega, \hat \omega \rangle^2} + \hat \eta_1$, which is a contradiction. Therefore $\lambda_1 > \eta_3 H$ for $y \in [\tilde y, y_{\max}]$. In Corollary \ref{nearby_neck}, choose $\hat a$ and $\hat b$ with respect to the constants $\frac{1}{2}\eta_3$ and $\gamma_1$. Let $q \in \Sigma(\tilde y, y_{\max})$. Then either $M = B_g(q, \hat a H(q)^{-1})$ or there exists a point $q_1 \in B_g(q, \hat a H(q)^{-1})$ where $\lambda_1(q_1) \leq \frac{1}{2} \eta_3H(q_1)$. In either case, $B_g(q, \hat a H(q)^{-1})$ has nonempty intersection with the neck $\tilde N$. Arguing as in the last part of Step 1 in the proof of Theorem \ref{neck_cap_noncompact} yields the desired upper mean curvature bound and the diameter estimate for the region $\Sigma(\tilde y, y_{\max})$.

To finish, we let $p_1 := \tilde p$ and let $D_1 \subset M \setminus \tilde N$ be the connected component containing $\tilde q$. Our arguments show claims (2)-(5) in the theorem statement hold for $D_1$ and $p_1$. Repeating the entire argument above for $y \leq 0$, we similarly find $p_2$ and $D_2$. Moreover, since $p_1$ and $p_2$ are the first points that barely lie at the center of $(\varepsilon_1, L)$-necks, our argument ensures that every point in $M \setminus (D_1 \cup D_2)$ lies at the center of an $(\varepsilon_0, L)$-neck, which is (1). This completes the proof of the theorem. 
\end{proof}

Theorem \ref{neck_cap_compact} address the structure of the ancient solution at sufficiently negative times. The following corollary addresses the structure of the solution at later times. 

\begin{corollary}
Suppose $\mathcal M$ is compact and satisfies $(\ast)$. Given $0 < \varepsilon_0 < \varepsilon(n)$ and $L \geq 100$, there exists a constant $C < \infty$, depending only upon $\varepsilon_0$, $L$, $n$, $\gamma_1$, and $\gamma_2$ so that the following holds. For every time $t \in (-\infty, 0]$, either there exists domains $D_1, D_2$, points $p_1, p_2$, and necks $N_1, N_2$ satisfying the conclusions of Theorem \ref{neck_cap_compact}, or else, for any point $p \in M$, there holds:
\begin{enumerate}
\item[(1)] The intrinsic diameter of $M$ is bounded by $C_0 H(p, t)^{-1}$. 
\item[(2)] Every point $q \in M$ satisfies $C_0^{-1} H(p, t) \leq H(q, t) \leq C_0 H(p, t)$ and $\lambda_1(q, t) \geq C_0^{-1} H(q, t)$. 
\end{enumerate}
\end{corollary}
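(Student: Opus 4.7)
My plan is to reduce the corollary to a dichotomy based on whether a sufficiently fine neck exists at the time $t$ in question. Fix $0 < \varepsilon_0 < \varepsilon(n)$ and $L \geq 100$, and let $\varepsilon_2$ (and the ambient constants $\eta_1, \hat\eta_1, \varepsilon_3, \eta_3$) be the constants produced in the proof of Theorem \ref{neck_cap_compact}, each depending only on $\varepsilon_0$, $L$, $n$, $\gamma_1$, and $\gamma_2$. For each $t \in (-\infty, 0]$, I split into two cases according to whether some point of $M$ lies at the center of an $(\varepsilon_2, L)$-neck at time $t$.

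In the first case such a point $p$ exists, and I claim the proof of Theorem \ref{neck_cap_compact} applies verbatim at time $t$, producing the decomposition $D_1, D_2, N_1, N_2, p_1, p_2$ satisfying conclusions $(1)$--$(5)$ of that theorem. Re-reading the proof, the sole role of the hypothesis $t \leq T_0$ there was to invoke Lemma \ref{asymptotics} together with the Neck Detection Lemma in order to guarantee the existence of some $(\varepsilon_2, L)$-neck at time $t$; once such a neck is given as input, every subsequent step — fixing the axis $\omega$ via Lemma \ref{approx_axis_compact}, following integral curves of $\omega^{\top}/|\omega^{\top}|^{2}$ in both directions, locating the marginal points $\tilde p$ that barely lie on $(\varepsilon_1, L)$-necks, and deriving the uniform convexity, mean curvature, and diameter estimates for the two resulting caps — depends only on the hypotheses of $(\ast)$ and the fixed-time existence of the fine neck.

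In the second case, no point of $M$ lies at the center of an $(\varepsilon_2, L)$-neck at time $t$. The contrapositive of the Neck Detection Lemma (Lemma \ref{neck_detection}) then supplies a constant $\eta_{\ast} = \eta_{\ast}(\varepsilon_2, L, n, \gamma_1, \gamma_2) > 0$ such that $\lambda_1(q, t) > \eta_{\ast} H(q, t)$ for every $q \in M$. Combined with the global gradient bound $|\nabla H| \leq n \gamma_1 H^2$ inherited from $(\ast)$, I apply the theorem immediately preceding Corollary \ref{nearby_neck} with $c_1 := n\gamma_1$ and $\eta_0 := \eta_{\ast}$ at an arbitrary point $p \in M$: since the two pointwise hypotheses hold on all of $M$, the set $U$ in that theorem coincides with $M$, and the conclusion becomes $M = B_g(p, \hat a H(p,t)^{-1})$ with $H(q,t) \geq \hat b^{-1} H(p,t)$ for every $q \in M$. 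Interchanging the roles of $p$ and $q$ produces the two-sided mean curvature comparison, and $\lambda_1 \geq \eta_{\ast} H$ is already in hand. Setting $C_0 := \max\{\hat a, \hat b, \eta_{\ast}^{-1}\}$ and enlarging $C_0$ by the constant from the first case yields $(1)$ and $(2)$.

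The only real obstacle is the bookkeeping check that the proof of Theorem \ref{neck_cap_compact} uses the assumption $t \leq T_0$ exclusively to manufacture the initial $(\varepsilon_2, L)$-neck; once this is verified by direct inspection, the dichotomy above is essentially immediate, with the second alternative reducing to a single invocation of the Huisken--Sinestrari-type comparison result that was already used to prove Corollary \ref{nearby_neck}.
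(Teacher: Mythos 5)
Your proposal is correct and follows essentially the same route as the paper: a dichotomy at each fixed time on whether an $(\varepsilon_2, L)$-neck exists, with the first alternative handled by observing that the proof of Theorem \ref{neck_cap_compact} only used $t \leq T_0$ to produce such a neck, and the second handled via the Neck Detection Lemma to get $\lambda_1 \geq \eta H$ globally and then Corollary \ref{nearby_neck} (equivalently the preceding Huisken--Sinestrari-type theorem) to get the diameter and mean curvature bounds. Your explicit remark about interchanging $p$ and $q$ to obtain the two-sided curvature comparison is a minor refinement of the paper's bookkeeping, not a different argument.
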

\begin{proof}
Let $0 < \varepsilon_0 < \varepsilon(n)$ and $L \geq 100$ be given (where $\varepsilon(n)$ is determined in the proof of Theorem \ref{neck_cap_compact}). We follow the first part of the proof of Theorem \ref{neck_cap_noncompact} to determine a constant $\varepsilon_2$ depending only upon the given constants. Since $t \leq T_0$ is addressed by the previous theorem, suppose $T_0 < t \leq 0$. If there exists a point $p \in M$ that lies at the center of an $(\varepsilon_2, L)$-neck at time $t$, then the proof of the previous theorem goes through unchanged, and we conclude there exists $D_i, p_i$, and $N_i$ for $i = 1, 2$ at time $t$. If, however, there does not exist a point in $M$ that lies at the center of an $(\varepsilon_2, L)$-neck, then by the Neck Detection Lemma, we can find a constant $\eta := \eta(\varepsilon_0, L, n, \gamma_1, \gamma_2) > 0$, such that $\lambda_1 \geq \eta H$ everywhere on $M$. Let $p$ be an arbitrary point in $M$. By Corollary \ref{nearby_neck}, we can find constants $\hat a$ and $\hat b$ depending upon $n, \eta$, and $\gamma_1$ such that $M = B_{g(t)}(p, \hat a H(p, t)^{-1})$ and $H(q, t) \geq \hat b^{-1} H(p, t)$ for every $q \in M$. We can take $C_0 := \max \{\eta^{-1}, \hat a, \hat b \}$. This completes the proof. 
\end{proof}


\section{Noncollapsing}

To complete the proof of our main theorem, we first show how controlled geometry of the cap implies $\alpha$-noncollapsing for an appropriate $\alpha$. 
\begin{theorem}\label{noncollapsed_hypersurface}
Let $n \geq 2$ and $F : M \to \R^{n+1}$ be an embedding of a closed, convex hypersurface. Given a positive constant $C_0$ such that $C_0^{-1} \leq H \leq C_0$ and $\lambda_1 \geq C_0^{-1} H$ everywhere on $M$, there exists $\alpha := \alpha(C_0)>0$ such that $M$ is $\alpha$-noncollapsed. 
\end{theorem}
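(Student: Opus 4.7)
The plan is to observe that this statement is almost a direct consequence of classical convex geometry; neither derivative estimates nor a compactness argument are needed. Since $M$ is closed, embedded, and convex, it bounds a convex body $\Omega \subset \R^{n+1}$. I will separate the noncollapsing condition into an exterior (circumscribed) and an interior (inscribed) part and treat them in turn.

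The exterior condition is automatic for any convex hypersurface: by convexity, the tangent hyperplane $T_p M$ is a supporting hyperplane at $p$, so $\Omega$ is entirely contained in the closed half-space $\{x : \langle x - p, \nu(p)\rangle \leq 0\}$. Consequently, for any $R > 0$, the ball $B(p + R\,\nu(p), R)$ is disjoint from $\Omega$ and tangent to $M$ at $p$. In particular, a ball of radius $\alpha/H(p)$ exists in the exterior for any $\alpha > 0$, so the content is entirely in the interior bound.

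For the interior ball, I would use Blaschke's rolling theorem. Since $M$ is convex, every principal curvature satisfies $\lambda_i(q) \leq H(q) \leq C_0$ for all $q \in M$. Blaschke's rolling theorem then asserts that a ball of radius $r = 1/C_0$ fits inside $\Omega$ and can be rolled to be internally tangent to $M$ at any prescribed point: for each $p \in M$, there is a ball $B_p \subset \bar\Omega$ of radius $1/C_0$ tangent to $M$ at $p$. (If a direct reference is preferred, a short self-contained proof goes via continuity: consider the one-parameter family of balls $B(p - r\nu(p), r)$ for $r \in [0, 1/C_0]$; if the ball ever exits $\Omega$, there is a smallest $r$ and a second point $q \in M$ at which the ball is internally tangent to $M$, and at $q$ one then computes $\lambda_n(q) \geq 1/r \geq C_0$, contradicting $\lambda_n \leq H \leq C_0$.)

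Finally, the hypothesis $H \geq 1/C_0$ converts the radius $1/C_0$ into the desired scale-invariant form. Setting $\alpha := 1/C_0^2$, we have $\alpha/H(p) \leq \alpha \cdot C_0 = 1/C_0$, so $B_p$ contains an internally tangent ball of radius $\alpha/H(p)$. Combined with the trivial exterior observation, this establishes $\alpha$-noncollapsing with $\alpha = 1/C_0^2$. The one nontrivial ingredient is Blaschke's rolling theorem; the other two steps are elementary bookkeeping with the given constants. I do not anticipate any essential obstacle beyond a careful invocation or short proof of the rolling principle.
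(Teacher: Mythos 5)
Your primary route is sound and genuinely different from the paper's. Invoking Blaschke's rolling theorem for convex bodies (in higher dimensions see Rauch, Delgado, or Schneider's book on convex bodies) is legitimate: convexity gives $0 \le \lambda_i \le H \le C_0$, so every principal curvature is at most $C_0$; the rolling theorem then furnishes at every $p \in M$ a ball of radius $C_0^{-1}$ contained in $\bar\Omega$ and tangent to $M$ at $p$; and $H \ge C_0^{-1}$ converts this into an interior tangent ball of radius $\alpha/H(p)$ with $\alpha = C_0^{-2}$. (Your exterior half-space remark is superfluous, since the notion of noncollapsing used here only asks for the interior tangent ball; also note your argument never uses $\lambda_1 \ge C_0^{-1}H$, so it proves a slightly stronger statement.) The paper instead argues from scratch and locally: for each $p$ it studies short unit-speed geodesics out of $p$; in Case 1 it excludes, via the upper bound $h \le H \le C_0$ and a second-derivative test applied to $|F(\gamma(s)) - F(p) + \alpha C_0\nu(p)|^2$, that such a geodesic enters the comparison ball; in Case 2 it uses the lower bound $\lambda_1 \ge C_0^{-1}H$ to prove the quadratic height estimate $\langle F(q)-F(p), -\nu(p)\rangle \ge \tfrac{1}{4}C_0^{-2} d_g(p,q)^2$, and convexity plus the non-entry of geodesics then traps a smaller tangent ball inside $\Omega$. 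What the paper's route buys is precisely its locality: it yields Corollary \ref{local_noncollapsing}, an inscribed-radius bound at points of a region $D$ at definite distance from $\partial D$, which is what is actually applied to the cap regions later; a global rolling theorem for the closed hypersurface does not directly give that. What your route buys is brevity and a weaker hypothesis.

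However, your parenthetical ``short self-contained proof'' of the rolling theorem is not correct, so your argument stands or falls with the citation. At the critical radius $r_*$, the ball $B(F(p) - r_*\nu(p), r_*)$ is contained in $\bar\Omega$ and touches $M$ at a second point $q$; since the ball lies inside $\bar\Omega$ and $M$ is tangent to it at $q$ from the outside of the ball, the second-order comparison gives $\lambda_n(q) \le 1/r_*$, not $\lambda_n(q) \ge 1/r_*$, so no contradiction with $\lambda_n \le C_0$ arises. (A long thin ellipse, with the tangent disc at a flattest point grown until it first touches the opposite side, shows the curvature at the second contact point can be far smaller than $1/r_*$.) In fact no purely local argument at the contact point can prove the rolling theorem: for non-convex domains an upper curvature bound does not yield an interior rolling ball (bottlenecks obstruct it with no curvature violation at the contact points), so convexity must enter globally --- for instance via support functions (all principal radii of curvature $\ge r$ means $\nabla^2 h + h g \ge r g$ on $S^n$, so $h - r$ is again a support function and $\bar\Omega = \Omega' + rB$), via an Andrews-type two-point maximum principle, or via a local argument of the kind the paper gives. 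So either keep a precise reference for the rolling theorem, or replace the sketch by one of these arguments.
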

\begin{proof}
Let $\Omega$ denote the convex interior of $F(M)$ and let $\nu$ denote the outward pointing normal vector. Let $\alpha > 0$. Throughout the proof, we will assume $\alpha$ is sufficiently small depending upon $C_0$. For any point $p \in M$, let $B_{p,\alpha}$ denote the Euclidean $(n+1)$-ball $B(F(p) - \alpha C_0 \nu(p), \alpha C_0) \subset \R^{n+1}$. It suffices to find $\alpha$ such that $B_{p,\alpha} \subset \Omega$ for every $p \in M$. By definition, $M$ is $\alpha$-noncollapsed if, for every $p \in M$, the Euclidean $(n+1)$-ball $B(F(p) - \frac{\alpha}{H(p)} \nu(p), \frac{\alpha}{H(p)})$ is contained in $\Omega$. Because, by assumption, $\frac{\alpha}{H(p)} \leq \alpha C_0$, it is easy to see $B(F(p) - \frac{\alpha}{H(p)} \nu(p), \frac{\alpha}{H(p)}) \subset B_{p, \alpha}$. 
Suppose there exist points $p \in M$ such that $B_{p, \alpha}$ is not contained in $\Omega$. We will examine short unit-speed geodesics $\gamma(s)$ emanating from $p$ for $s \in [0, \alpha C_0]$. We distinguish two cases. In the first case, we will suppose that there exists a short geodesic emanating from some point $p$ which bends into the region $B_{p, \alpha}$. We will show that for $\alpha$ sufficiently small, this would imply the curvature nearby is too large. In the second case, we will assume for every $p$ that short geodesics do not intersect $B_{p, \alpha}$. We will use the lower bound for $\lambda_1$ to deduce a height estimate for short geodesics in $M$ over $T_pM$ for each $p \in M$. The height estimate, convexity, and non-intersection with $B_{p,\alpha}$ will imply the surface is $\tilde \alpha$-noncollapsed for a suitable $\tilde \alpha > 0$. 

\textit{Case 1:} Suppose there exists a point $p \in M$ with $B_{p, \alpha} \not \subset \Omega$ and a unit vector $e \in T_pM$ such that the unit-speed geodesic $\gamma(s)$, defined by $\gamma(0) = p$ and $\gamma'(0) = e$, enters the region enclosed by the ball $B_{p,\alpha}$ in the interval $[0, \alpha C_0]$. That is, there exists some $s \in (0, \alpha C_0]$ such that $|F(\gamma(s)) - F(p) + \alpha C_0 \nu(p)| \leq \alpha C_0$. Let $\bar \nabla$ denote the ambient connection on $\R^{n+1}$. Since $F$ is an embedding, we write $\gamma'(s)$ in place of $dF(\gamma'(s))$. Using that $\nabla_{\gamma'} \gamma' = 0$, we have
\[
\bar \nabla_{\gamma'}\gamma' = \nabla_{\gamma'} \gamma' + \langle \bar \nabla_{\gamma'} \gamma', \nu \rangle \nu = - h(\gamma', \gamma') \nu.
\]
Consider the function 
\[
f(s) = \frac{1}{2}\big(|F(\gamma(s))- F(p) + \alpha C_0 \nu(p)|^2 - \alpha^2C_0^2\big).
\]
The derivatives of this function are given by 
\begin{align*}
f'(s) &= \big\langle \gamma'(s)\,,\, F(\gamma(s)) -F(p) + \alpha C_0\nu(p) \big\rangle, \\
f''(s) &= \big \langle \bar \nabla_{\gamma'}  \gamma'(s)\,,\, F(\gamma(s)) - F(p) + \alpha C_0 \nu(p) \big \rangle + | \gamma'(s)|^2 \\
& = -h(\gamma'(s), \gamma'(s))\big \langle \nu(\gamma(s))\,,\,F( \gamma(s)) -F(p) + \alpha C_0 \nu(p) \big\rangle + 1.
\end{align*}
Initially we have $f(0) = 0$, $f'(0) = 0$, and $f''(0) =1 - \alpha C_0 h(e,e)$. Since $h(e,e) < H(p) \leq C_0$, $f''(0) > 1 - \alpha C_0^{2}$. We assume $\alpha \leq \frac{1}{2}C_0^{-2}$ so that $f''(0) \geq \frac{1}{2}$. This implies $f(s) > 0$ for small $s > 0$. By the mean value theorem, the function attains a positive local maximum at some $s_\ast \in (0, \alpha C_0)$.  At a local maximum, $f''(s_\ast) \leq 0$, which implies 
\[
\langle \nu(\gamma(s_\ast))\,,\, F(\gamma(s_\ast)) - F(p) + \alpha C_0 \nu(p) \rangle \,h(\gamma'(s_\ast), \gamma'(s_\ast)) \geq 1 
\]
Note $h(\gamma'(s_\ast), \gamma'(s_\ast)) >0$, so $\langle \nu(\gamma(s_\ast))\,,\, F(\gamma(s_\ast)) - F(p) + \alpha C_0 \nu(p) \rangle > 0$. Now because $\gamma$ is a unit-speed curve, $|F(\gamma(s_\ast)) - F(\gamma(0))| \leq s_\ast \leq \alpha C_0$. This gives
\[
\langle \nu(\gamma(s_\ast))\,,\, F(\gamma(s_\ast)) - F(p) + \alpha C_0 \nu(p) \rangle \leq |F(\gamma(s_\ast)) - F(p) + \alpha C_0 \nu(p)| \leq  2 \alpha C_0 \leq C_0^{-1}.
\]
Therefore, $h(\gamma'(s_\ast), \gamma'(s_\ast)) \geq C_0$. This contradicts our previous observation $h(\gamma'(s_\ast), \gamma'(s_\ast))  < H(\gamma(s_\ast)) \leq C_0$. In conclusion, this case cannot occur if $\alpha \leq \frac{1}{2}C_0^{-2}$. 

\textit{Case 2:} Let $\alpha = \frac{1}{2}C_0^{-2}$. The alternative is that around every point $p$, the unit-speed geodesics emanating from $p$ do not enter $B_{p, \alpha}$ for $s \in [0, \alpha C_0]$. In this case, we examine the height of $M$ over $T_pM$ for every $p \in M$. Fix a point $p$, a unit vector $e \in T_pM$, and consider the unit-speed geodesic $\gamma(s)$ emanating from $p$ for $s \in [0, \alpha C_0]$ as above. Define 
\[
k(s) = \langle F(\gamma(s)) - F(p), -\nu(p) \rangle. 
\]
The function is the height of the point $F(\gamma(s))$ over the hyperplane $dF(T_pM) \subset \R^{n+1}$. The derivatives of this function are given by 
\begin{align*}
k'(s) &= \langle \gamma'(s), - \nu(p) \rangle, \\
k''(s) &= \big\langle \bar \nabla_{\gamma'(s)} \gamma'(s), - \nu(p) \big\rangle \\
& = h(\gamma'(s), \gamma'(s)) \big\langle \nu(\gamma(s)), \nu(p) \big\rangle.
\end{align*}
Initially, $k(0) = 0$, $k'(0) = 0$, and $k''(0) = h(e, e) \geq \lambda_1(p) \geq C_0^{-1} H \geq C_0^{-2}$. Let $e_1, \dots, e_n$ be an orthonormal frame of $T_{\gamma(s)}M$ for some $s \in [0, \alpha C_0]$. Then
\begin{align*}
\Big|\frac{d}{ds} \nu(\gamma(s)) \Big|&= \Big|\bar \nabla_{\gamma'(s)} \nu(\gamma(s))\Big| = \Big|\sum_{i =1}^n h(\gamma'(s), e_i) e_i\Big| \leq \lambda_n(\gamma(s)) \leq C_0.
\end{align*}
This implies 
\[
\langle \nu(\gamma(s)), \nu(p) \rangle \geq 1 - C_0 s \geq 1 - \alpha C_0^2 \geq \frac{1}{2}.
\]
It follows that for $s \in [0, \alpha C_0]$
\[
k''(s) \geq \frac{1}{2} h(\gamma'(s), \gamma'(s)) \geq \frac{1}{2} \lambda_1(\gamma(s)) \geq \frac{1}{2}C_0^{-1}H \geq \frac{1}{2} C_0^{-2}.
\]
Integrating implies  
\[
k(s) = \int_0^s k'(u) \, du = \int_0^s \int_0^u k''(v) \, dv \, du \geq \frac{1}{4}s^2 C_0^{-2}.
\]
Consequently, 
\[
\langle F(q) - F(p), - \nu(p) \rangle \geq \frac{1}{4}C_0^{-2} d_g(p,q)^2 
\]
for points $q \in M$ satisfying $d_g(p, q) \leq \alpha C_0 = \frac{1}{2} C_0^{-1}$. In other words, the height of the boundary of $F(B_g(p, \frac{1}{2}C_0^{-1}))$ over $dF(T_pM)$ is at least $\frac{1}{16}C_0^{-4}$. Since we have assumed that the short unit-speed geodesics do not enter the ball $B_{p, \alpha}$, convexity of $\Omega$ implies that the portion of $B_{p, \alpha}$ within the halfspace $\{x \in \R^{n+1} : \langle x - F(p), - \nu(p) \rangle \leq \frac{1}{16}C_0^{-4}\}$ is contained in $\Omega$. If $\tilde \alpha := \frac{1}{32}C_0^{-5}$, then the ball $B_{p, \tilde \alpha}$ is contained within this region. The point $p \in M$ was arbitrary and thus $M$ is $\tilde \alpha$-noncollapsed. This completes the proof. 
\end{proof}

The proof of the above result is local in the sense that it only depended upon examining the hypersurface $M$ in a small geodesic ball around each point $p$. From the proof, one readily deduces the following corollary. 

\begin{corollary}\label{local_noncollapsing}
Let $n \geq 2$ and $F : M \to \R^{n+1}$ be an embedding of a (possibly noncompact) complete, convex hypersurface. Let $D \subset M$ be an open region and suppose there exists a constant $C_0$ such that $C_0^{-1} \leq H \leq C_0$ and $\lambda_1 \geq C_0^{-1} H$ everywhere on $D$. Then there exists $\alpha := \alpha(C_0) > 0$ such that if $p \in D$ is any point with $d_g(p, \d D) > \frac{1}{2} C_0^{-1}$, then the inscribe radius at $p$ is at least $\frac{\alpha}{H(p)}$. 
\end{corollary}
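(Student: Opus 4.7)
The plan is to observe that the proof of Theorem \ref{noncollapsed_hypersurface} is entirely local in nature. For a fixed point $p \in M$, the arguments in both Case 1 and Case 2 only probe $F(M)$ along unit-speed geodesics emanating from $p$ of length at most $\alpha C_0 = \tfrac{1}{2}C_0^{-1}$, and they use only (i) the pointwise bounds $H \leq C_0$ and $\lambda_1 \geq C_0^{-1}H$ along those geodesics, together with (ii) the \emph{global} convexity of the hypersurface $F(M)$. The extra hypothesis $d_g(p, \d D) > \tfrac{1}{2}C_0^{-1}$ is tailored exactly to make this local information available even though the curvature bounds are only assumed on $D$, while the completeness and convexity of $F : M \to \R^{n+1}$ remain in force globally.

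First I fix $\alpha := \tfrac{1}{32}C_0^{-5}$ and take any $p \in D$ with $d_g(p, \d D) > \tfrac{1}{2}C_0^{-1}$. Every unit-speed geodesic $\gamma : [0, \tfrac{1}{2}C_0^{-1}] \to M$ starting at $p$ remains inside $D$, so the bounds $C_0^{-1} \leq H \leq C_0$ and $\lambda_1 \geq C_0^{-1}H$ hold along $\gamma$. With this in hand, I run the dichotomy of the previous proof verbatim. In Case 1, if such a $\gamma$ bends into the Euclidean ball $B_{p,1/(2C_0^{2})}$, the same auxiliary function $f(s) = \tfrac{1}{2}\bigl(|F(\gamma(s)) - F(p) + \tfrac{1}{2}C_0^{-1}\nu(p)|^{2} - \tfrac{1}{4}C_0^{-2}\bigr)$ attains a positive interior maximum, forcing $h(\gamma'(s_\ast), \gamma'(s_\ast)) \geq C_0$ at that maximum, which contradicts $H \leq C_0$ on $D$.

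In Case 2 no short geodesic from $p$ enters $B_{p, 1/(2C_0^{2})}$. The height computation in the theorem uses only $\lambda_1 \geq C_0^{-2}$ and $\lambda_n \leq C_0$ along the geodesic, both of which hold in $D$, and yields
\[
\langle F(q) - F(p), -\nu(p)\rangle \;\geq\; \tfrac{1}{4}C_0^{-2}\, d_g(p,q)^{2}
\]
for all $q \in B_g(p, \tfrac{1}{2}C_0^{-1}) \subset D$. Combining this height estimate with the global convexity of $\Omega$ (the convex interior of $F(M)$), the proof of Theorem \ref{noncollapsed_hypersurface} then shows that the Euclidean ball $B\bigl(F(p) - \alpha C_0\,\nu(p), \alpha C_0\bigr)$ is contained in $\Omega$. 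Since $H(p) \leq C_0$ gives $\alpha/H(p) \leq \alpha C_0$, this inscribed ball contains $B\bigl(F(p) - (\alpha/H(p))\nu(p), \alpha/H(p)\bigr)$, which is precisely the claim.

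There is no real obstacle: once one notes that Cases 1 and 2 of the theorem were already written so that the only global ingredient is the convexity of $F(M)$, the corollary is immediate. The only point to verify carefully — and the reason the distance hypothesis $d_g(p, \d D) > \tfrac{1}{2}C_0^{-1}$ appears in exactly that form — is that every geodesic of length $\alpha C_0 = \tfrac{1}{2}C_0^{-1}$ from $p$ used in the two cases stays inside $D$, so that the local curvature assumptions can be invoked along its entire length.
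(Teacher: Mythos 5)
Your proposal is correct and is essentially the paper's own argument: the paper deduces this corollary by exactly the observation you make, namely that Cases 1 and 2 of the proof of Theorem \ref{noncollapsed_hypersurface} only probe the hypersurface along unit-speed geodesics of length $\tfrac{1}{2}C_0^{-1}$ from $p$ (where the curvature bounds hold thanks to $d_g(p,\partial D) > \tfrac{1}{2}C_0^{-1}$) together with the global convexity of $F(M)$. Your bookkeeping of the constants ($\alpha C_0$ versus the final $\tilde\alpha = \tfrac{1}{32}C_0^{-5}$) matches the theorem's proof, so nothing further is needed.
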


The corollary shows that points on a cap have a uniform inscribe radius. On the other hand, a point that lies at the center of a fine neck has an inscribe radius that is comparable to the inscribe radius of the cylinder. One could argue this in the following way: if $p \in M$ lies at the center of an $(\varepsilon, L)$-neck $N$, then we can find an approximate axis $\omega$ of $N$ such that $\langle \nu(p), \omega \rangle = 0$. Let $y(x) = \langle x - F(p), \omega \rangle$ denote the Euclidean coordinate defined by $\omega$; let $P_{\bar y} := \{x \in \R^{n+1} : y(x) = \bar y\} \subset \R^{n+1}$ denote hyperplanes orthogonal to $\omega$; and let $\Sigma_{\bar y} := F(M) \cap P_{\bar y}$. For small $\varepsilon$, $L \geq 100$, and $|y| \leq 5 \frac{n-1}{H(p)} < L \frac{n-1}{H(p)}$, the surface $\Sigma_y$ is very close to an $(n-1)$-sphere of radius $\frac{n-1}{H(p)}$. Consequently, $\Sigma_0$ is embedded within $P_0$, bounds a domain $\Omega_0 \subset P_0$, and $\Omega_0$ must contain a Euclidean $n$-ball of radius $\frac{n-1}{2H(p)}$ tangent to $F(p)$ within $P_0$. Now follow the integral curve $\gamma(y, p)$ of $\frac{\omega^\top}{|\omega^\top|^2}$ for $|y| \leq \frac{n-1}{H(p)}$. Along $\gamma$, the mean curvature changes very little. So for $\varepsilon$ sufficiently small, the domain $\Omega_y$ in the hyperplane $P_y$ contains a Euclidean $n$-ball tangent to $F(\gamma(y, p)) \in \Sigma_y$ of radius $\frac{n-1}{4H(p)}$. Taking the union over the $\Omega_y$ for $|y| \leq \frac{n-1}{H(p)}$ , we construct a small tube contained $\Omega$, the convex interior of $F(M)$. The tube curves a little bit in the direction of $\omega$, but because we are on a fine neck, this curvature is very small compared to the curvature of the Euclidean $(n+1)$-ball $B:= B(F(p) - \frac{n-1}{8H(p)}\nu(p), \frac{n-1}{8H(p)})$. It is easy to see that $P_y \cap B \subset \Omega_y$ and therefore $B$ is contained in the intersection of $\Omega$. By these considerations, the inscribe radius at $p$ is at least $\frac{n-1}{8H(p)}$. 

With our structure theorem and these noncollapsing arguments, we can show the time slices of a solution $\mathcal M$ are all noncollapsed with a uniform noncollapsing constant. This will complete the proof of Theorem \ref{main}. 

\begin{theorem}
Suppose $\mathcal M$ satisfies $(\ast)$. There exists $\alpha > 0$, depending only upon $n$, $\gamma_1$, and $\gamma_2$, such that $\mathcal M$ is $\alpha$-noncollapsed. 
\end{theorem}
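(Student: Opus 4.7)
The plan is to combine the structure theorems of Sections 3 and 4 with the local noncollapsing results (Theorem~\ref{noncollapsed_hypersurface} and Corollary~\ref{local_noncollapsing}) and with the direct inscribed-ball argument on a fine neck that precedes this theorem. First, dispose of the degenerate cases separately: if $\mathcal M$ is a static hyperplane, a family of shrinking cylinders, or a family of shrinking round spheres, then it is obviously $\alpha$-noncollapsed for a universal $\alpha$ depending only on $n$, so we may assume $\mathcal M$ satisfies $(\ast)$ and, in the compact case, is not a shrinking sphere. Fix small $\varepsilon_0 \in (0,\varepsilon(n))$ and $L \geq 100$; these determine (via Theorem~\ref{neck_cap_noncompact} or Theorem~\ref{neck_cap_compact} and its corollary) universal constants $\varepsilon_1$ and $C_0$ depending only on $n, \gamma_1, \gamma_2$.

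The key observation is that, at every time slice $t \in (-\infty, 0]$, every point $p \in M$ falls into one of two types. Either $p$ belongs to a cap region (denoted $D$, or $D_1 \cup D_2$ in the compact case, or all of $M$ in the late-time compact corollary), in which case the structure theorems guarantee $C_0^{-1}H(p_\ast, t) \leq H(q, t) \leq C_0 H(p_\ast, t)$ and $\lambda_1(q, t) \geq C_0^{-1} H(q, t)$ for every $q$ in that cap, together with an intrinsic diameter bound $C_0 H(p_\ast,t)^{-1}$. Or else, by Proposition~\ref{curvature_necks} together with the Neck Detection Lemma~\ref{neck_detection}, $p$ lies at the center of an $(\varepsilon_0, L)$-neck at time $t$. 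The plan is to produce a definite inscribed ball at $p$ in each of these two scenarios.

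On a cap, I want to apply Corollary~\ref{local_noncollapsing} after rescaling by $H(p_\ast, t)$; the rescaled cap satisfies $\tilde H \in [C_0^{-1}, C_0]$, $\tilde \lambda_1 \geq C_0^{-1} \tilde H$, and has diameter at most $C_0$. For points $p$ with rescaled intrinsic distance at least $\tfrac{1}{2}C_0^{-1}$ from $\partial D$, the corollary gives an inscribed ball of radius $\alpha_1/H(p, t)$ for a universal $\alpha_1 = \alpha_1(C_0)$. For points near the boundary of the cap, $p$ itself lies in (or extremely close to) the adjacent $(\varepsilon_0, L)$-neck, so we are actually in the neck case. On a neck, I will use the explicit tube construction in the paragraph immediately following Corollary~\ref{local_noncollapsing}: the integral curves of $\omega^\top/|\omega^\top|^2$ trace out a tube containing a round Euclidean $(n+1)$-ball of radius $\tfrac{n-1}{8 H(p,t)}$ tangent to $F(p,t)$ from the inside, giving an inscribed radius at least $\alpha_2/H(p,t)$ with $\alpha_2 = (n-1)/8$.

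Setting $\alpha := \min\{\alpha_1, \alpha_2\}$ finishes the proof, as $p$ and $t$ were arbitrary and $\alpha$ depends only on $n, \gamma_1, \gamma_2$ through $C_0$. The main obstacle I anticipate is the neat handling of the overlap between the cap region and the adjacent neck: one needs to be careful that a point near $\partial D$ on the cap side is also captured by an $(\varepsilon_0, L)$-neck so that the neck argument applies, and to verify that the Neck Detection Lemma together with the hypothesis $\partial D \subset \partial N$ in the structure theorems covers a full neighborhood of $\partial D$. A minor bookkeeping point is choosing $\varepsilon_0$ small enough (and $L$ large enough) that the tube construction on an $(\varepsilon_0, L)$-neck genuinely delivers the inscribed ball of radius $\tfrac{n-1}{8H(p,t)}$ claimed above; this is only a matter of quantifying the $C^{10}$-closeness to a round cylinder and does not introduce any new constants.
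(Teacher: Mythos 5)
Your proposal follows essentially the same route as the paper: use the structure theorems to split each time slice into cap region(s) plus points lying at centers of $(\varepsilon_0,L)$-necks, get a uniform inscribed radius on the cap from Corollary \ref{local_noncollapsing} and on neck points from the tube construction preceding the theorem, and take the minimum of the two constants. However, two points you leave open are exactly where the argument needs care. The first is the cap/neck overlap that you flag yourself: as written, your neck case requires the point to lie at the \emph{center} of an $(\varepsilon_0,L)$-neck, but for a point of $D$ within distance $\tfrac12 C_0^{-1}$ of $\partial D$ the structure theorem only guarantees that nearby points (those of $N\cup\tilde D$) are neck centers, not the point itself, so ``extremely close to the adjacent neck'' does not yet give the inscribed ball. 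The paper removes this issue by applying Corollary \ref{local_noncollapsing} to the region $D\cup N$ rather than to $D$ alone: the bounds $C_0^{-1}\le H\le C_0$ and $\lambda_1\ge C_0^{-1}H$ also hold on $N$ (this comes from Step 1 of the proof of Theorem \ref{neck_cap_noncompact}, where $\lambda_1>\eta_1 H$ on $N$ because $p$ is not the center of an $(\varepsilon_1/2,2L)$-neck), and after the normalization $H(p)=1$ the neck has intrinsic length of order $L(n-1)\gg\tfrac12 C_0^{-1}$, so every point of $D$ is automatically deep inside $D\cup N$. Either adopt that device or carry out the (doable, but not free) adaptation of the tube construction to points lying well inside, rather than at the center of, a neck.

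The second gap is that to invoke Theorem \ref{neck_cap_noncompact} at an arbitrary time slice you must first exhibit a point which lies at the center of an $(\varepsilon_1,L)$-neck but not of an $(\tfrac{\varepsilon_1}{2},2L)$-neck; this is precisely Lemma \ref{existence_neck}, which your proposal never cites (in the compact case the analogous input is Lemma \ref{asymptotics}, which produces the time $T_0$ in Theorem \ref{neck_cap_compact}). Without it, the dichotomy ``cap or neck center'' that your proof rests on is not yet available. A harmless difference from the paper: for compact $\mathcal M$ at times $t>T_0$ you use the corollary following Theorem \ref{neck_cap_compact} (rescaling and treating all of $M$ as a cap in the second alternative), whereas the paper proves noncollapsing only for $t\le T_0$ and then uses that the noncollapsing constant is preserved forward in time; both are valid, and yours avoids quoting the preservation result at the cost of using the corollary.
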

\begin{proof}
First suppose $\mathcal M$ is noncompact. Fix a time $t_0$ and let $\Omega$ denote the convex interior of $F(M, t_0)$. Set $L = 100$ and fix some $0 < \varepsilon_0 < \varepsilon(n)$ sufficiently small for Theorem \ref{neck_cap_noncompact} to apply. For these values of $L$ and $\varepsilon_0$, we can find constants $\varepsilon_1 \in (0, \varepsilon_0)$ and $C_0 > 1$, depending only upon $n, \gamma_1$, and $\gamma_2$, so that the conclusions of Theorem \ref{neck_cap_noncompact} hold.  We can assume $\varepsilon_1$ is small enough for Lemma \ref{existence_neck} to apply. Then there exists a point $p \in M$ that lies at the center of an $(\varepsilon_1, L)$-neck, but not at the center of an $(\frac{\varepsilon_1}{2}, 2L)$-neck (at time $t_0$). 

After a rescaling, we may assume $H(p) = 1$. The results of Theorem \ref{neck_cap_noncompact} tell us $M$ is the union of a compact connected component $D$, the neck $N$, and an unbounded connected component $\tilde D$. In $\tilde D \cup N$, every point lies at the center of an $(\varepsilon_0, L)$-neck. Moreover, the estimates $C_0^{-1} \leq H \leq C_0$ and $\lambda_1 \geq C_0^{-1}H$ hold in $D \cup N$.

First, suppose $q \in M$ lies at the center of an $(\varepsilon_0, L)$-neck $N$. An exact cylinder is $\alpha$-noncollapsed for $\alpha = n-1$. As in the remark before the theorem, the inscribe radius of $q$ is at least $\frac{\alpha}{H(q)}$ for some $\alpha = \alpha(n)$. Now suppose $q \in D$ is a point on the cap. The intrinsic length of the neck $N$ is approximately $100(n-1)$, where as $\frac{1}{2} C_0^{-1} < \frac{1}{2}$. Thus clearly $d_g(q, \d \tilde D) > \frac{1}{2} C_0^{-1}$. By Corollary \ref{local_noncollapsing} applied to the region $D \cup N$, the inscribe radius of $q$ is at least $\frac{\alpha(C_0)}{H(q)}$. Since every point in $M$ is either contained in $D$ or at the center of an $(\varepsilon_0, L)$-neck, the hypersurface $M$ is $\alpha$-noncollapsed everywhere for $\alpha$ independent of the time $t_0$. Hence $\mathcal M$ is $\alpha$-noncollapsed. 

If $\mathcal M$ is compact, the argument is similar. We may assume $\mathcal M$ is not a family of shrinking round spheres. For $L = 100$,  $0 < \varepsilon_0 < \varepsilon(n)$ sufficiently small, and for $t_0$ sufficiently negative, we conclude $M$ is the union of a neck region and two caps $D_1$, $D_2$, which have uniform mean curvature and convexity estimates and whose boundaries are spherical cross-sections of $(\frac{\varepsilon_0}{2}, 2L)$-necks. The arguments above show each region is $\alpha$-noncollapsed and hence $\mathcal M$ is $\alpha$-noncollapsed for all sufficiently negative times. Since the noncollapsing constant is preserved forward in time, this completes the proof. 
\end{proof}


\section{appendix}

In this appendix, we explain how to show the sets $E_{p, t}$ introduced in the proof of Lemma \ref{optimal_two_convexity} are invariant under parallel transport (with respect to $g(t)$) using the strict maximum principle. A reference for this argument is \cite{Bre10}. For the convenience of the reader, we make the minor modifications necessary for our setting. 

We begin by introducing relevant notation.  Suppose we have a local solution to the mean curvature flow $\mathcal M$ in $\R^{n+1}$ defined on $\Omega \times (-T, 0)$, for $\Omega$ a smooth domain in $\R^n$ and $T > 0$. Suppose we know our solution is weakly convex and that $\lambda_1 + \lambda_2 \equiv \beta H$ for some $\beta \in (0, \frac{1}{n-1})$. Define a vector bundle $\mathcal E$ over $\Omega \times (-T, 0)$ to be the pullback of the tangent bundle $T\Omega$ under the projection $\Omega \times (-T, 0) \to \Omega$ so that $\mathcal E_{(p, t)} = T_p\Omega$. We have a bundle metric $g(t)$ defined on $\mathcal E$ and there is a standard compatible connection $D$ on $\mathcal E$ that extends the Levi-Civita connection. Namely, for any section $X$ of $\mathcal E$, we define $D_{\pdv{t}}X = \pdv{t} X - \sum_{k =1}^nHh(X, e_i) e_i$ where $\{e_1, \dots, e_n\}$ is an orthonormal frame with respect to $g(t)$. It is easy to check that $D_{\pdv{t}} g(t) = 0$. Finally, let $\mathcal O$ denote the orthonormal frame bundle associated to $\mathcal E$. For every $(p, t)$, the fiber $\mathcal O_{(p,t)}$ consists of orthonormal frames $\underline e = \{e_1, \dots, e_n\}$ of $T_p\Omega$ with respect to $g(t)$. Given a point $(p, t)$ and an orthonormal frame $\underline e = \{e_1, \dots, e_n\}$ with respect to $g(t)$, the tangent space $T_{\underline e} \mathcal O$ decomposes into the direct sum of vertical and horizontal vector spaces, $\mathcal V_{\underline e}$ and $\mathcal H_{\underline e}$. The vertical space is the tangent space to the fiber $\mathcal O_{(p,t)}$ and vertical vectors are induced by infinitesimal action of $O(n)$ upon $\underline e$. The horizontal space is defined through the connection $D$. To define it, consider any smooth path $\gamma : (-\varepsilon, \varepsilon) \to \Omega \times (-T, 0)$ such that $\gamma(0) = (p, t)$. Extend the frame $\underline e$ by parallel transport along $\gamma$ using $D$. This defines a path $\tilde \gamma : (-\varepsilon, \varepsilon) \to \mathcal O$. The vector $\tilde \gamma'(0)$ is defined to be the horizontal lift of $\gamma'(0)$. Define $\mathcal X_1, \dots, \mathcal X_n$ and $\mathcal Y$ in $T_{\underline e} \mathcal O$ to be the horizontal lifts of $e_1, \dots, e_n$ and $\pdv{t}$, respectively. Then one has $T_{\underline e}\mathcal O = \mathcal V_{\underline e} \oplus \mathrm{span}\{\mathcal X_1, \dots, \mathcal X_n, \mathcal Y\}$. 

With the notation above, for each orthonormal frame $\underline e \subset \mathcal E_{(p,t)}$, we define 
\[
\varphi(\underline e) = h_{g(t)}(e_1, e_1) + h_{g(t)}(e_2, e_2) - \beta H(p,t). 
\]
This defines a smooth, nonnegative function $\varphi : \mathcal O \to \R$. Recall the evolution equations $D_{\pdv{t}} h = \Delta h + |A|^2 h$ and $D_{\pdv{t}} H = \Delta H + |A|^2 H$.
It follows that 
\begin{align*}
\mathcal Y(\varphi) - \mathcal X_i(\mathcal X_i(\varphi)) &=  (D_{\pdv{t}}h - \Delta h)(e_1, e_1) + (D_{\pdv{t}}h - \Delta h)(e_2, e_2) - \beta(\pdv{t} H - \Delta H)\\
& = |A|^2 \varphi \geq 0. 
\end{align*}
This is a degenerate elliptic equation for $\varphi$. Let $\mathcal F = \{\underline e \in \mathcal O : \varphi(\underline e ) = 0\}$ denote the zero set of $\varphi$. Fix a time $\tau \in (-T, 0)$. We claim the set of all two-frames $\{e_1,e_2\}$ that are orthonormal with respect to $g(\tau)$ and satisfy $h_{g(\tau)}(e_1, e_1) + h_{g(\tau)}(e_2, e_2) = \beta H_{g(\tau)}$ is invariant under parallel transport. Let $\gamma : [0, 1] \to \Omega$ be a smooth path and let $\underline v(s) := \{v_1(s), \dots, v_n(s)\}$ be a parallel orthonormal frame along $\gamma$ for $s \in [0, 1]$ with respect to $g(\tau)$. This defines a smooth path $\underline v : [0, 1] \to \mathcal O$ with the property that $\underline v(s)$ lies above $\gamma(s)$ and $\underline v'(s)$ is the horizontal lift of $\gamma'(s)$. Evidently, we can find smooth functions $f_1, \dots, f_n : [0, 1] \to \R$ such that $\gamma'(s) = \sum_{j =1}^n f_j(s) v_j(s)$ for $s \in [0, 1]$ and this implies that 
\[
\underline v'(s) = \sum_{j =1}^n f_j(s) \mathcal X_j|_{\underline v(s)}. 
\]
If we assume that $h_{g(\tau)}(v_1(0), v_1(0)) + h_{g(\tau)}(v_2(0), v_2(0)) = \beta H_{g(\tau)}$, then $\underline v(0) \in \mathcal F$. At this point, all of the assumptions of Bony's strict maximum principle for degenerate elliptic equations (Corollary 9.7 in \cite{Bre10}) have been verified, and thus we conclude $\underline v(s) \in \mathcal F$ for $s \in [0, 1]$. This completes the proof.

\sc{Department of Mathematics, Columbia University, New York, NY 10027}

\end{document}